\documentclass{amsart}
\usepackage{amssymb,amstext,amsmath,amscd,amsthm,amsfonts,mathtools,stmaryrd,enumerate,graphicx,latexsym}
\usepackage[all]{xy}

\newtheorem{thm}{Theorem}[section]
\newtheorem{cor}[thm]{Corollary}
\newtheorem{lem}[thm]{Lemma}
\newtheorem{prop}[thm]{Proposition}

\theoremstyle{definition}
\newtheorem{dfn}[thm]{Definition}
\newtheorem{rem}[thm]{Remark}

\newtheorem{ex}[thm]{Example}

\theoremstyle{remark}
\newtheorem*{ac}{Acknowledgments}

\def\AA{\operatorname{A}}
\def\add{\operatorname{add}}
\def\Ass{\operatorname{Ass}}
\def\btp{\operatorname{btp}}

\def\C{\mathcal{C}}
\def\CC{\operatorname{C}}
\def\cl{\operatorname{cl}}
\def\CM{\operatorname{CM}}
\def\D{\mathcal{D}}
\def\dep{\operatorname{dep}}
\def\depth{\operatorname{depth}}
\def\dim{\operatorname{dim}}
\def\E{\mathcal{E}}
\def\Ext{\operatorname{Ext}}
\def\F{\mathcal{F}}
\def\grade{\operatorname{grade}}
\def\Hom{\operatorname{Hom}}
\def\id{\operatorname{id}}
\def\inc{\mathrm{inc}}
\def\Ker{\operatorname{Ker}}
\def\m{\mathfrak{m}}
\def\Min{\operatorname{Min}}
\def\mod{\operatorname{mod}}
\def\Mod{\operatorname{Mod}}

\def\p{\mathfrak{p}}
\def\q{\mathfrak{q}}
\def\S{\operatorname{S}}
\def\spcl{\operatorname{spcl}}
\def\Spec{\operatorname{Spec}}
\def\Supp{\operatorname{Supp}}
\def\T{\mathcal{T}}
\def\tp{\operatorname{tp}}
\def\V{\operatorname{V}}
\def\wtp{\operatorname{wtp}}
\def\X{\mathcal{X}}

\begin{document}
\allowdisplaybreaks
\title[Torsion pairs in subcategories of modules over a commutative ring]{Torsion pairs in subcategories of modules\\
over a commutative ring}
\author{Shinnosuke Kosaka}
\address{Graduate School of Mathematics, Nagoya University, Furocho, Chikusaku, Nagoya 464-8602, Japan}
\email{kosaka.shinnosuke.d2@s.mail.nagoya-u.ac.jp}
\author{Ryo Takahashi}
\address{Graduate School of Mathematics, Nagoya University, Furocho, Chikusaku, Nagoya 464-8602, Japan}
\email{takahashi@math.nagoya-u.ac.jp}
\author{Gen Tanigawa}
\address{Graduate School of Mathematics, Nagoya University, Furocho, Chikusaku, Nagoya 464-8602, Japan}
\email{tanigawa.gen.y4@s.mail.nagoya-u.ac.jp}
\subjclass[2020]{13D30, 13C60}
\keywords{(co)torsion pair, maximal Cohen--Macaulay module, (specialization-)closed subset}

\begin{abstract}
Let $R$ be a commutative noetherian ring. 
Denote by $\mod R$ the category of finitely generated $R$-modules. 
Let $\Delta$ be a subset of $\Spec R$, and let $\Ass^{-1}\Delta$ stand for the full subcategory of $\mod R$ consisting of finitely generated $R$-modules whose associated prime ideals belong to $\Delta$.
In this paper, we consider classifying torsion pairs in $\Ass^{-1}\Delta$ and some other full subcategories of $\mod R$.
\end{abstract}

\maketitle

\section{Introduction}

The notion of a {\em torsion pair} was introduced by Dickson \cite{D} for abelian categories in 1966. 
A torsion pair is a pair of (full) subcategories that provides a canonical decomposition of every object into its torsion and torsionfree parts. 
In the representation theory of artin algebras, torsion theory plays an important role and has been widely investigated, particularly in tilting theory and related areas; see \cite{ASS} for instance.

In commutative algebra, classification of subcategories of modules has been actively studied.
Let $R$ be a commutative noetherian ring. 
Denote by $\mod R$ the category of finitely generated $R$-modules. 
A celebrated theorem of Gabriel \cite{G} yields a bijection between the Serre subcategories of $\mod R$ and the specialization-closed subsets of $\Spec R$. 
Takahashi \cite{T} established a one-to-one correspondence between the subcategories of $\mod R$ closed under subobjects and extensions, and the subsets of $\Spec R$.
Stanley and Wang \cite{SW} proved that a subcategory of $\mod R$ closed under extensions and cokernels is Serre.
A lot of other results on this topic have been obtained so far; see \cite{crspd,Ho,K,arg,dlr} and references therein.

In the present paper, for a commutative noetherian ring $R$, we consider classifying torsion pairs in a fixed subcategory $\X$ of $\mod R$. 
Let $(\T,\F)$ be a pair of subcategories of $\X$. 
We say that $(\T,\F)$ is a {\em weak torsion pair} in $\X$ if $\T$ consists of the modules $X\in\X$ with $\Hom_R(X,\F)=0$ and $\F$ consists of the modules $X\in\X$ with $\Hom_R(\T,X)=0$. 
When $\X$ is closed under extensions in $\mod R$, we say that $(\T,\F)$ is a {\em torsion pair} in $\X$ if $\Hom_R(\T,\F)=0$ and every module $X\in\X$ admits a short exact sequence $0\to T\to X\to F\to0$ in $\mod R$ with $T\in\T$ and $F\in\F$.
We say that $(\T,\F)$ is a {\em basic torsion pair} in $\X$ if it is a torsion pair in $\X$ admitting ideals $I,J$ of $R$ such that $\T$ consists of the modules $X\in\X$ with $IX=0$ and $\F$ consists of the modules $X\in\X$ with $JX=0$.
There are inclusions
$$
\{\text{Basic torsion pairs in $\X$}\}\subseteq
\{\text{Torsion pairs in $\X$}\}\subseteq
\{\text{Weak torsion pairs in $\X$}\},
$$
which are strict in general.
We mainly investigate torsion pairs in
$$
\Ass^{-1}\Delta=\{M\in\mod R\mid\Ass M\subseteq\Delta\}
$$
for a fixed subset $\Delta$ of $\Spec R$.
The following theorem is the main result of this paper.

\begin{thm}[Theorems \ref{thm tp}, \ref{thm btp}, and \ref{thm btp min}]\label{1}
Let $R$ be a commutative noetherian ring.
Let $\Delta$ be a subset of $\Spec R$. 
Then there is a commutative diagram
$$
\xymatrix{
{\left\{\begin{matrix}\text{Weak torsion}\\
\text{pairs in $\Ass^{-1}\Delta$}\end{matrix}\right\}}\ar@{=}[r]&
{\left\{\begin{matrix}\text{Torsion pairs}\\
\text{in $\Ass^{-1}\Delta$}\end{matrix}\right\}}
\ar[r]^-{\sigma}_-\cong&
{\left\{\begin{matrix}\text{Specialization-closed}\\
\text{subsets of $\Delta$}\end{matrix}\right\}}
\\
&{\left\{\begin{matrix}\text{Basic torsion}\\
\text{pairs in $\Ass^{-1}\Delta$}\end{matrix}\right\}}
\ar[u]^-\inc\ar[r]^-\tau
&
{\left\{\begin{matrix}\text{Closed subsets $C$}\\
\text{of $\Delta$ such that}\\
\text{$C\sqcup C'=\Delta$ for some}\\
\text{closed subset $C'$ of $\Delta$}
\end{matrix}\right\}}
\ar[u]_-\inc
}
$$
of maps of sets, where $\inc$ means an inclusion map, $\sigma$ is a bijection, and so is $\tau$ if $\Delta$ contains $\Min R$.
\end{thm}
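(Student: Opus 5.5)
The plan is to define $\sigma$ concretely by
$$\sigma(\T,\F)=\{\p\in\Delta\mid R/\p\in\T\}.$$
This makes sense because $\Ass R/\p=\{\p\}$, so $R/\p\in\Ass^{-1}\Delta$ exactly when $\p\in\Delta$. For the inverse, take a specialization-closed subset $W\subseteq\Delta$ and let $V=\bigcup_{\p\in W}\V(\p)$, which is specialization-closed in $\Spec R$ and satisfies $V\cap\Delta=W$. Set
$$\T_W=\{M\in\Ass^{-1}\Delta\mid \Ass M\subseteq W\},\qquad \F_W=\{M\in\Ass^{-1}\Delta\mid \Ass M\subseteq\Delta\setminus W\}.$$
For $M\in\Ass^{-1}\Delta$ the condition $\Ass M\subseteq W$ is equivalent to $\Supp M\cap\Delta\subseteq W$, because every prime of $\Supp M$ contains some associated prime.

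\emph{$(\T_W,\F_W)$ is a torsion pair.} The torsion sequence of $M$ is $0\to\Gamma_V(M)\to M\to M/\Gamma_V(M)\to0$. The standard facts give
$$\Ass\Gamma_V(M)=\Ass M\cap V=\Ass M\cap W,\qquad \Ass\bigl(M/\Gamma_V(M)\bigr)\subseteq\Ass M\setminus V,$$
so both terms lie where they should. For $\Hom_R(\T_W,\F_W)=0$, let $I$ be the image of a map $T\to F$ with $T\in\T_W$ and $F\in\F_W$. Then
$$\Ass I\subseteq \Ass F\cap\Supp T\subseteq(\Delta\setminus W)\cap W=\emptyset,$$
so $I=0$. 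It follows that $W\mapsto(\T_W,\F_W)$ lands in torsion pairs, and $\sigma(\T_W,\F_W)=W$ since $R/\p\in\T_W$ iff $\p\in W$.

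\emph{Every weak torsion pair is some $(\T_W,\F_W)$.} Let $(\T,\F)$ be a weak torsion pair. Directly from the definitions, $\T$ is closed under extensions and under quotients lying in $\Ass^{-1}\Delta$, and $\F$ is closed under submodules and extensions. Put $W=\sigma(\T,\F)$.

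\begin{enumerate}[(i)]
\item $W$ is specialization-closed in $\Delta$: if $\p\subseteq\q$ with $\p\in W$ and $\q\in\Delta$, then $R/\q$ is a quotient of $R/\p$, so $R/\q\in\T$.
\item $\F\subseteq\F_W$: if $F\in\F$ and $\p\in\Ass F$, then $R/\p\hookrightarrow F$ gives $R/\p\in\F$. Hence $R/\p\notin\T$ (it has the nonzero identity endomorphism), so $\p\notin W$.
\item $\F_W\subseteq\F$: this reduces to the key claim below. Suppose $T\in\T$ and $f\colon T\to F$ is a nonzero map with $F\in\F_W$. Its image $I$ lies in $\T$, since it is a quotient of $T$ and $\Ass I\subseteq\Ass F\subseteq\Delta$. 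Choose $\p$ minimal in $\Supp I$, so $\p\in\Ass I\subseteq\Delta\setminus W$. The image $N$ of $I\to I_\p/\p I_\p$ is a nonzero quotient of $I$ with $\Ass N=\{\p\}$, so $N\in\T$.
\end{enumerate}

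The key claim is that $N\in\T$ forces $R/\p\in\T$. This is the step I expect to be the main obstacle. Here $N$ is a nonzero torsion-free $R/\p$-module, so $R/\p\hookrightarrow N$, but torsion classes are not closed under submodules. I would argue through $\F$: suppose $g\colon R/\p\to F'$ is nonzero with $F'\in\F$, and try to extend a suitable multiple of $g$ to $N$. Concretely, embed $N\hookrightarrow(R/\p)^r$ with torsion cokernel, and use that $N$ and $R/\p$ have the same localization at $\p$ up to a direct sum. This should produce a nonzero map $N\to F'^{\oplus r}\in\F$, contradicting $N\in\T$. Granting the claim, $\p\in W$, contradicting $\p\in\Delta\setminus W$; so $f=0$ and $\F_W\subseteq\F$. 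Then $\F=\F_W$, and $\T={}^\perp\F=\T_W$ because $(\T_W,\F_W)$ is itself a torsion pair. This proves both the equality of weak torsion pairs with torsion pairs and the bijectivity of $\sigma$.

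\emph{The basic part.} Let $(\T,\F)$ be basic, given by ideals $I,J$. Then $R/\p\in\T$ iff $\p\in\V(I)$, so $\sigma(\T,\F)=\V(I)\cap\Delta$. Similarly $\F_W=\F$ gives $\Delta\setminus W=\V(J)\cap\Delta$. So $\tau:=\sigma|$ lands in the stated set and the diagram commutes; injectivity of $\tau$ is inherited from $\sigma$.

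For surjectivity when $\Min R\subseteq\Delta$, take $W=\V(I)\cap\Delta$ with $\Delta\setminus W=\V(J)\cap\Delta$. For $M\in\T_W$, every minimal prime of $\Supp M$ is in $\Ass M\subseteq W\subseteq\V(I)$, so $I^nM=0$ for some $n$; but we need an exact annihilator. My plan is to replace $I$ by $I'=\Gamma_{\V(J)}(R)$ and $J$ by $J'=\Gamma_{\V(I)}(R)$, using that $\Min R\subseteq\V(I)\cup\V(J)$ forces $IJ$ to be nilpotent. I would then check that $\T_W=\{M\mid I'M=0\}$ via $\Gamma_V$-decompositions, and symmetrically for $\F_W$. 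This last verification is the second place I expect genuine work.
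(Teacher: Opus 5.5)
\textbf{There is a genuine gap in step (iii).} Most of your plan is sound: the pairs $(\T_W,\F_W)$ are torsion pairs, steps (i) and (ii) are correct, and step (iii) correctly reduces to a module $N\in\T$ with $\Ass N=\{\p\}$ and $\p\in\Delta\setminus W$. But your key claim, that $N\in\T$ forces $R/\p\in\T$, is never proved. It is exactly what the equality ``weak torsion pairs $=$ torsion pairs'' and the bijectivity of $\sigma$ rest on.

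The construction you indicate does not work. Restricting $g^{\oplus r}$ along an embedding $N\hookrightarrow(R/\p)^r$ can give the zero map. Take $R=k[[x,y]]$, $\p=0$, $N=\m$, $F'=k$, and $g\colon R\to k$ the projection. Every map $\m\to R$ is multiplication by an element of $R$, so every embedding $\m\hookrightarrow R^r$ lands in $\m R^r$, and its composite with $g^{\oplus r}$ vanishes. Comparing $N$ and $R/\p$ at $\p$ is also the wrong place to look. The image of $g$ may be supported only at primes strictly containing $\p$ (here only at $\m$), so it is invisible after localizing at $\p$.

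\textbf{The missing ingredient} is the standard formula $\Ass_R\Hom_R(N,M)=\Supp N\cap\Ass M$ for finitely generated $N,M$. Equivalently, $\Hom_R(N,M)\neq0$ if and only if some associated prime of $M$ lies in $\Supp N$. The paper uses this in Lemma \ref{formula perp modR}. With it your claim is immediate:
\begin{enumerate}[\rm(1)]
\item If $R/\p\notin\T={}^\perp\F$, choose a nonzero $g\colon R/\p\to F'$ with $F'\in\F$.
\item Choose $\q\in\Ass(\operatorname{im}g)\subseteq\Ass F'$. Then $\q\supseteq\p$.
\item Hence $\q$ lies in $\Supp N$, and indeed already in $\Supp I$, so you do not need $N$ at all.
\item So $\Hom_R(I,F')\neq0$, contradicting $I\in\T$.
\end{enumerate}

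\textbf{Comparison with the paper.} The paper avoids your case analysis entirely. The same formula gives, for any subcategory $\T$, $\T^\perp\cap\Ass^{-1}\Delta=\Ass^{-1}(\Delta\setminus\Supp\T)$ and ${}^\perp(\T^\perp)\cap\Ass^{-1}\Delta=\Ass^{-1}(\Delta\cap\Supp\T)$. Hence every weak torsion pair equals $\Phi_\Delta(\Delta\cap\Supp\T)$ at once.

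\textbf{The basic part.} Your plan works, and the deferred verification is short. With $I'=\Gamma_{\V(J)}(R)$ one has $J^mI'=0$ for $m\gg0$. For $M\in\T_W$ we have $\Ass M\cap\V(J)=\emptyset$, so some $x\in J^m$ is $M$-regular, and $xI'M=0$ gives $I'M=0$. Conversely, $I^n\subseteq I'$ gives the reverse inclusion. The paper runs the same regular-element argument with the simpler choice $I=K^n$, $J=L^n$, where $K^nL^n=0$.
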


\noindent
The maps $\sigma$ and $\tau$ can be explicitly given.
Applying the theorem for $\Delta=\Spec R$ and $\Delta=\Min R$, we obtain complete classifications of the weak torsion pairs, the torsion pairs, and the basic torsion pairs in $\mod R$ and the subcategory $\S_1(R)$ of $\mod R$ consisting of $R$-modules satisfying Serre's condition $(\S_1)$.

\begin{cor}[Corollaries \ref{cor tp modR} and \ref{cor S1}]
Let $R$ be a commutative noetherian ring. 
One has the following equalities and bijections.
\begin{align*}
\left\{\begin{matrix}\text{Weak torsion}\\
\text{pairs in $\mod R$}\end{matrix}\right\}
&=\left\{\begin{matrix}\text{Torsion pairs}\\
\text{in $\mod R$}\end{matrix}\right\}
\cong\left\{\begin{matrix}\text{Specialization-closed}\\
\text{subsets of $\Spec R$}\end{matrix}\right\},
\\
\left\{\begin{matrix}\text{Basic torsion}\\
\text{pairs in $\mod R$}\end{matrix}\right\}
&\cong\left\{(C,C')\,\bigg|\,\begin{matrix}\text{$C,C'$ are closed subsets of}\\
\text{$\Spec R$ with $C\sqcup C'=\Spec R$}\end{matrix}\right\},\\
\left\{\begin{matrix}\text{Weak torsion}\\
\text{pairs in $\S_1(R)$}\end{matrix}\right\}
&=\left\{\begin{matrix}\text{Torsion pairs}\\
\text{in $\S_1(R)$}\end{matrix}\right\}
=\left\{\begin{matrix}\text{Basic torsion}\\
\text{pairs in $\S_1(R)$}\end{matrix}\right\}
\cong\left\{\begin{matrix}\text{Subsets}\\
\text{of $\Min R$}\end{matrix}\right\}.
\end{align*}
\end{cor}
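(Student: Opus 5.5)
The corollary follows by specializing Theorem \ref{1} to $\Delta=\Spec R$ and $\Delta=\Min R$. In both cases $\Delta\supseteq\Min R$, so $\tau$ is a bijection. The work is to identify $\Ass^{-1}\Delta$ in each case and to simplify the set of closed subsets on the right-hand side.

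For $\Delta=\Spec R$ we have $\Ass^{-1}\Delta=\mod R$. The top row of Theorem \ref{1} then gives directly that weak torsion pairs and torsion pairs in $\mod R$ coincide and correspond to the specialization-closed subsets of $\Spec R$. For basic torsion pairs, $\tau$ is a bijection onto the closed subsets $C$ of $\Spec R$ that admit a closed $C'$ with $C\sqcup C'=\Spec R$. Such a $C'$ is forced to be $\Spec R\setminus C$, so it is unique. Hence $C\mapsto(C,\Spec R\setminus C)$ identifies this set with the set of pairs $(C,C')$ of closed subsets with $C\sqcup C'=\Spec R$, which is the second bijection.

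For $\Delta=\Min R$, a finitely generated module $M$ satisfies $(\S_1)$ exactly when $\Ass M\subseteq\Min R$; equivalently, $M$ has no embedded primes and every associated prime is minimal in $\Spec R$. So $\S_1(R)=\Ass^{-1}\Min R$. I would check this against whichever definition of $(\S_1)$ the paper adopts, since this identification is the one point needing care.

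Next, every subset of $\Min R$ is specialization-closed inside $\Min R$: if $\p\in W$ and $\q\in\Min R$ contains $\p$, then $\q=\p$ by minimality. Similarly, every subset $C$ of $\Min R$ is closed in $\Min R$. To see this, take $\p\in\Min R\setminus C$ with $\p\supseteq\bigcap_{\q\in C}\q$. Since $\Min R$ is finite, $\bigcap_{\q\in C}\q$ is a finite intersection, so prime avoidance gives $\p\supseteq\q$ for some $\q\in C$, hence $\p=\q$, a contradiction. The complement of $C$ in $\Min R$ is then also closed. Therefore both the top-right and bottom-right sets in the diagram equal the full power set of $\Min R$, and the right-hand inclusion is the identity.

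Commutativity of the diagram, together with $\sigma$ and $\tau$ being bijections, now forces the left inclusion of basic torsion pairs into torsion pairs to be a bijection. This yields the third chain of equalities and the bijection with subsets of $\Min R$. I expect no real obstacle beyond confirming $\S_1(R)=\Ass^{-1}\Min R$ under the paper's convention for Serre's condition.
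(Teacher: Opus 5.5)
Your proposal is correct and follows the paper's route. Like the paper, you specialize Theorems \ref{thm tp} and \ref{thm btp min} (packaged as Theorem \ref{1}) to $\Delta=\Spec R$ and $\Delta=\Min R$, using $\mod R=\Ass^{-1}(\Spec R)$, $\S_1(R)=\Ass^{-1}(\Min R)$ (valid under the paper's convention $\depth_{R_\p}M_\p\ge\min\{1,\dim R_\p\}$), and $X_{\Min R}=\spcl(\Min R)=2^{\Min R}$, which holds because $\Min R$ is finite and discrete; you additionally spell out details the paper leaves implicit, namely the uniqueness $C'=\Spec R\setminus C$ and the fact that commutativity of the diagram forces $\btp(\S_1(R))=\tp(\S_1(R))$.
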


We also study weak torsion pairs in subcategories of $\mod R$ that are not of the form $\Ass^{-1}\Delta$, such as the subcategory $\CM(R)$ of maximal Cohen--Macaulay modules over a Cohen--Macaulay local ring $R$.
For a subcategory $\X$ of $\mod R$, we set $\Ass\X=\bigcup_{M\in\X}\Ass M$.
The following theorem holds.

\begin{thm}[Theorem \ref{thm A}]\label{3}
Let $R$ be a commutative noetherian ring and $\X$ a subcategory of $\mod R$. Suppose that for every $\p\in\Ass\X$, there exists $X\in\X$ such that $\Ass X=\{\p\}$.
One then has a bijection
$$
\left\{\begin{matrix}\text{Weak torsion}\\
\text{pairs in $\X$}\end{matrix}\right\}
\cong
\left\{\begin{matrix}\text{Specialization-closed}\\
\text{subsets of $\Ass\X$}\end{matrix}\right\}.
$$
\end{thm}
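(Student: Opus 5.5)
The plan is to use the standard formula $\Ass_R\Hom_R(M,N)=\Supp M\cap\Ass N$ for finitely generated $M,N$. It gives
$$
\Hom_R(M,N)=0\iff\Supp M\cap\Ass N=\emptyset .
$$
With this, both orthogonality conditions in the definition of a weak torsion pair become conditions on primes. Throughout, "specialization-closed subset of $\Ass\X$" means a subset $W\subseteq\Ass\X$ such that $\p\in W$, $\q\in\Ass\X$ and $\p\subseteq\q$ imply $\q\in W$. For each $\p\in\Ass\X$ fix, by hypothesis, a module $X_\p\in\X$ with $\Ass X_\p=\{\p\}$, so that $\Supp X_\p=\V(\p)$. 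We will also use repeatedly that every prime in $\Supp M$ contains a prime of $\Ass M$.

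First define the maps. Send a weak torsion pair $(\T,\F)$ to $\Ass\T$. In the other direction, send $W$ to $(\T_W,\F_W)$, where
$$
\T_W=\{X\in\X\mid \Ass X\subseteq W\},\qquad \F_W=\{X\in\X\mid \Ass X\cap W=\emptyset\}.
$$
The modules $X_\p$ give $\Ass\T_W=W$ and $\Ass\F_W=\Ass\X\setminus W$. Once we know $(\T_W,\F_W)$ is a weak torsion pair, this shows that $W\mapsto(\T_W,\F_W)$ followed by $(\T,\F)\mapsto\Ass\T$ is the identity.

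Next, $(\T_W,\F_W)$ is a weak torsion pair. Take $X\in\X$.
\begin{itemize}
\item If $\Ass X\subseteq W$, then every $\q\in\Supp X\cap\Ass\X$ contains a prime of $\Ass X\subseteq W$, so $\q\in W$. Hence $\Supp X\cap(\Ass\X\setminus W)=\emptyset$, and $\Hom_R(X,\F_W)=0$.
\item Conversely, if $\Hom_R(X,\F_W)=0$, then testing against $X_\q$ for $\q\in\Ass X\setminus W$ gives $\q\notin\Supp X$, which is absurd. So $\Ass X\subseteq W$.
\item The symmetric argument applies to $\F_W$. Here $\Supp\T_W\cap\Ass\X\subseteq W$ by the same closedness argument, while $W\subseteq\Supp\T_W$. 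Hence $\Hom_R(\T_W,X)=0$ if and only if $\Ass X\cap W=\emptyset$.
\end{itemize}

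Finally, start from a weak torsion pair $(\T,\F)$ and put $W=\Ass\T$. We show $W$ is specialization-closed in $\Ass\X$ and that $(\T,\F)=(\T_W,\F_W)$.
\begin{itemize}
\item Closedness. Let $\p\in\Ass T$ with $T\in\T$, and let $\q\in\Ass\X$ with $\q\supseteq\p$. If some $F\in\F$ had a prime $r\in\Ass F\cap\V(\q)$, then $r\in\Supp T\cap\Ass F$, contradicting $\Hom_R(T,F)=0$. So $\Hom_R(X_\q,\F)=0$, whence $X_\q\in\T$ and $\q\in W$.
\item The inclusion $\T\subseteq\T_W$ is trivial.
\item For $\T_W\subseteq\T$, take $X$ with $\Ass X\subseteq W$ and $r\in\Supp X\cap\Ass F$ for some $F\in\F$. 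Then $r$ contains a prime of $W$, so $r\in W$ by closedness, i.e. $r\in\Ass T'$ for some $T'\in\T$. Then $r\in\Supp T'\cap\Ass F$, contradicting $\Hom_R(T',F)=0$. So $\Hom_R(X,\F)=0$ and $X\in\T$.
\item Then $\F=\T^{\perp}=\T_W^{\perp}=\F_W$.
\end{itemize}
So the two maps are mutually inverse.

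The only delicate point I expect is keeping straight that "specialization-closed" is relative to $\Ass\X$ and not to $\Spec R$. The hypothesis that each $\p\in\Ass\X$ is realized as the unique associated prime of some $X_\p\in\X$ is exactly what lets us test membership prime by prime. It is used in the closedness step and in computing $\Ass\T_W$ and $\Ass\F_W$.
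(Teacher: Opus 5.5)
Your proof is correct. The part where you show that $(\T_W,\F_W)$ is a weak torsion pair and recover $W$ from it is essentially the paper's argument. The paper checks $\Ass\X\cap\Supp\T=Z$, $\T^\perp_\X=\F$ and ${}^\perp_\X\F=\T$ for $(\T,\F)=\Phi_{\Ass\X}(Z)\cap\X$ in the same way, by testing against modules with a single associated prime.

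The routes differ in the other half. You prove directly that an arbitrary weak torsion pair $(\T,\F)$ equals $(\T_W,\F_W)$ with $W=\Ass\T$. The paper never does this by hand. It gets it from Proposition \ref{prop wtp X}(1): by Proposition \ref{prop wtp subcat}, every weak torsion pair in $\X$ is the restriction of the pair $f_\Delta(\T,\F)$ in $\Ass^{-1}\Delta$, and those pairs are classified by Theorem \ref{thm tp}.

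Accordingly, the paper's invariant is $\Ass\X\cap\Supp\T$ rather than your $\Ass\T$. The paper's set is automatically specialization-closed in $\Ass\X$. Moreover, the map $(\T,\F)\mapsto\Ass\X\cap\Supp\T$ is injective for every subcategory $\X$, so condition (A) is only needed for surjectivity. Your invariant needs (A) just to be specialization-closed; your closedness step uses $X_\q$, and without (A) it genuinely can fail. Under (A) the two invariants coincide, by that same closedness argument.

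What each approach buys:
\begin{itemize}
\item Your route is self-contained. It uses nothing beyond $\Ass\Hom_R(M,N)=\Supp M\cap\Ass N$, and avoids both the orthogonal-category formalism of Section 2 and Theorem \ref{thm tp}.
\item The paper's route gives structural information. Weak torsion pairs in any $\X$ are restrictions of torsion pairs in $\Ass^{-1}(\Ass\X)$, and (A) is what ensures that each such restriction is again a weak torsion pair with the expected invariant.
\end{itemize}
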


Applying this theorem, we can classify the weak torsion pairs in $\CM(R)$ when $R$ is a local hypersurface.

\begin{cor}[Corollary \ref{cor HS}]
Let $R=S/(f)$ be a hypersurface local ring, where $S$ is a regular local ring and $f$ is a nonunit element of $S$. Then there exists a one-to-one correspondence
$$
\left\{\begin{matrix}\text{Weak torsion}\\
\text{pairs in $\CM(R)$}\end{matrix}\right\}
\cong
\left\{\begin{matrix}\text{Subsets}\\
\text{of $\Min R$}\end{matrix}\right\}.
$$
\end{cor}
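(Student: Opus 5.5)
We prove the corollary by applying Theorem \ref{3} to $\X=\CM(R)$, and then identifying the specialization-closed subsets of $\Ass\CM(R)$. The work therefore splits into two parts. First, compute $\Ass\CM(R)$. Second, check that for each $\p$ in this set there is a maximal Cohen--Macaulay module $X$ with $\Ass X=\{\p\}$.

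\emph{Computing $\Ass\CM(R)$.} Let $M$ be a nonzero maximal Cohen--Macaulay module over the Cohen--Macaulay local ring $R$. Then $\dim R/\p=\dim R$ for every $\p\in\Ass M$. For a hypersurface, $R$ is equidimensional, and all these primes are minimal primes of $R$. Hence $\Ass\CM(R)\subseteq\Min R$. Conversely, $R$ itself is maximal Cohen--Macaulay and $\Ass R=\Min R$, since $R$ is Cohen--Macaulay and so has no embedded primes. Thus $\Ass\CM(R)=\Min R$. The members of $\Min R$ are pairwise incomparable, so every subset of $\Min R$ is specialization-closed in $\Min R$. This is exactly the right-hand side of the claimed bijection.

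\emph{Constructing $X$ with $\Ass X=\{\p\}$.} Factor $f=u f_1^{e_1}\cdots f_n^{e_n}$ in the UFD $S$, where $u$ is a unit and the $f_i$ are pairwise non-associate primes. The minimal primes of $R$ are then $\p_i=(f_i)/(f)$. Put $g=f/f_i^{e_i}$ and take $X=R/(f_i^{e_i})=S/(f_i^{e_i})$.

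\begin{itemize}
\item $X$ is maximal Cohen--Macaulay over $R$. It is a hypersurface ring of dimension $\dim S-1=\dim R$, hence Cohen--Macaulay of full dimension. Equivalently, there is an exact sequence $0\to R/(f_i^{e_i})\xrightarrow{g}R\to R/(g)\to 0$, which forms part of a matrix factorization $(f_i^{e_i},g)$ of $f$.
\item $\Ass_S X=\{(f_i)\}$. This holds because $S/(f_i^{e_i})$ is Cohen--Macaulay with unique minimal prime $(f_i)$.
\end{itemize}

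Therefore $\Ass_R X=\{\p_i\}$, which verifies the hypothesis of Theorem \ref{3}.

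The main point to be careful about is this second part: constructing $X$ when $f$ is not reduced, and making sure the associated primes are computed over $R$ rather than over $S$. The choice $X=S/(f_i^{e_i})$ handles both issues, because $\Ass_R X$ corresponds to $\Ass_S X$ via $\Spec R\subseteq\Spec S$. Combining the two parts with Theorem \ref{3} yields the asserted bijection.
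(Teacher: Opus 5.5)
Your proof is correct and follows the paper's route: show $\Ass\CM(R)=\Min R$, verify condition $(\AA)$ with a cyclic maximal Cohen--Macaulay module, and apply Theorem \ref{thm A}. The paper simply takes $X=R/\p_i=S/(p_i)$, a domain of dimension $\dim S-1=\dim R$, which is maximal Cohen--Macaulay whatever the multiplicity of $p_i$ in $f$. So your concern about non-reduced $f$, and the passage to $S/(f_i^{e_i})$, are unnecessary, though harmless.

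The one thing you omit is the case $f=0$, which is allowed since $0$ is a nonunit. There your factorization of $f$ and the equality $\dim R=\dim S-1$ both fail. The case is easy: $R=S$ is a domain, so $\Min R=\{0\}$ and $X=R$ satisfies $(\AA)$. The paper disposes of it the same way, via Corollary \ref{cor CM}(3).
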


We prove Theorems \ref{1} and \ref{3} in Section 4, after giving basic notions and their fundamental properties in Sections 2 and 3. In Section 5, we apply the results in Section 4 to obtain various classifications of torsion pairs in a fixed subcategory of $\mod R$.
Throughout this paper, unless otherwise specified, we assume that all rings are commutative noetherian ring with identity, all modules are finitely generated, and all subcategories are strictly full. For simplicity, subscripts are omitted when no confusion arises.

\section{Weak torsion pairs in preadditive categories}

Throughout this section, let $\C$ be a preadditive category. For a subcategory $\D$ of $\C$, we denote by $\D^\perp$ (resp. $^\perp\D$) the right (resp. left) orthogonal subcategory of $\C$, that is, the subcategory of $\C$ consisting of objects $C\in\C$ such that $\Hom_\C(D,C)=0$ (resp. $\Hom_\C(C,D)=0$) for all $D\in\D$. We begin with the definition of a weak torsion pair.

\begin{dfn}
A pair $(\T,\F)$ of subcategories of $\C$ is called a {\em weak torsion pair} in $\C$ if $\T^\perp=\F$ and $^\perp\F=\T$. In this case, $\T$ (resp. $\F$) is called the {\em torsion part} (resp. {\em torsionfree part}) of $(\T,\F)$. We denote by $\wtp(\C)$ the collection of weak torsion pairs in $\C$.
\end{dfn}

The orthogonal subcategory of a subcategory of $\C$ has the following closedness property. For a sequence $A\overset{f}\to B\overset{g}\to C$ in $\C$, we say that $f$ is a {\em pseudo-kernel} of $g$ (resp. $g$ is a {\em pseudo-cokernel} of $f$) if the sequence $\Hom_\C(X,A)\overset{f\circ-}\to\Hom_\C(X,B)\overset{g\circ-}\to\Hom_\C(X,C)$ (resp. $\Hom_\C(C,X)\overset{-\circ g}\to\Hom_\C(B,X)\overset{-\circ f}\to\Hom_\C(A,X)$) of abelian groups is exact for all $X\in\C$.

\begin{lem}\label{lem closed under}
Let $\D$ be a subcategory of $\C$.
\begin{enumerate}[\rm(1)]
\item 
The subcategory $\D^\perp$ of $\C$ is closed under subobjects.
\item 
The subcategory $^\perp\D$ of $\C$ is closed under quotient objects.
\item 
Let $A\overset{f}\to B\overset{g}\to C$ be a sequence of morphisms in $\C$ such that $f$ is a pseudo-kernel of $g$. If $A,C\in\D^\perp$, then $B\in\D^\perp$.
\item 
Let $A\overset{f}\to B\overset{g}\to C$ be a sequence of morphisms in $\C$ such that $g$ is a pseudo-cokernel of $f$. If $A,C\in{}^\perp\D$, then $B\in{}^\perp\D$.
\item 
The subcategories $\D^\perp$ and $^\perp\D$ of $\C$ are closed under direct sums and direct summands.
\end{enumerate}
\end{lem}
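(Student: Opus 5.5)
The plan is to argue everything directly from the definitions of $\D^\perp$ and ${}^\perp\D$, using only that $\C$ is preadditive. Parts (1), (3) and (5) concern $\D^\perp$; parts (2) and (4) follow from them by duality. The duality argument: $\D^\perp$ computed in $\C^{\mathrm{op}}$ is ${}^\perp\D$ computed in $\C$; subobjects in $\C^{\mathrm{op}}$ are quotient objects in $\C$; and a pseudo-kernel in $\C^{\mathrm{op}}$ is a pseudo-cokernel in $\C$. So I only write out the arguments for $\D^\perp$ and then invoke duality.

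For (1), let $m\colon A\to B$ be a monomorphism with $B\in\D^\perp$, and let $h\colon D\to A$ be a morphism with $D\in\D$. Then $m\circ h\in\Hom_\C(D,B)=0$. Since $m$ is a monomorphism, $m\circ h=0=m\circ 0$ gives $h=0$, so $A\in\D^\perp$.

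For (3), fix $D\in\D$ and take $X=D$ in the definition of pseudo-kernel. This gives an exact sequence
\[
\Hom_\C(D,A)\to\Hom_\C(D,B)\to\Hom_\C(D,C)
\]
of abelian groups. The two outer terms vanish because $A,C\in\D^\perp$. Exactness at the middle then forces $\Hom_\C(D,B)=0$, so $B\in\D^\perp$.

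For (5), recall that in a preadditive category a direct sum $B_1\oplus B_2$ comes with morphisms $\iota_i$ and $\pi_i$ satisfying $\pi_i\iota_j=\delta_{ij}$ and $\iota_1\pi_1+\iota_2\pi_2=1$.
\begin{enumerate}[\rm(i)]
\item If $B_1,B_2\in\D^\perp$ and $h\colon D\to B_1\oplus B_2$ with $D\in\D$, then each $\pi_ih$ lies in $\Hom_\C(D,B_i)=0$. Hence $h=\iota_1\pi_1h+\iota_2\pi_2h=0$.
\item If $B_1\oplus B_2\in\D^\perp$ and $h\colon D\to B_1$, then $\iota_1h=0$, so $h=\pi_1\iota_1h=0$.
\end{enumerate}
This handles direct sums and direct summands for $\D^\perp$. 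The case of ${}^\perp\D$ is the mirror argument, composing on the other side.

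There is no real obstacle. The only point needing care is the duality bookkeeping for (2) and (4), namely checking that the definitions of pseudo-cokernel and quotient object are exactly the opposite-category versions of pseudo-kernel and subobject. If in doubt, I would simply redo (1) and (3) verbatim for ${}^\perp\D$, using epimorphisms and the contravariant Hom sequence with $X=D$.
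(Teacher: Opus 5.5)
Your proposal is correct and follows the paper's proof: (1) and (3) are the same Hom-exactness arguments, and (2) and (4) are handled by the same duality the paper calls ``similar.'' The only difference is in (5), where you compute directly with the biproduct identities. The paper instead deduces (5) from (1) and (3): a summand $B_1$ is a subobject of $B_1\oplus B_2$ via the split monomorphism $\iota_1$, and $\iota_1$ is a pseudo-kernel of $\pi_2$ in $B_1\to B_1\oplus B_2\to B_2$.
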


\begin{proof}
The proofs of (2) and (4) are similar to those of (1) and (3), and (5) follows immediately from (1) and (3). Therefore, we only prove (1) and (3).

(1) Let $f:A\to B$ be a monomorphism in $\C$ with $B\in\D^\perp$. For all $D\in\D$, the sequence $0\to\Hom_\C(D,A)\to\Hom_\C(D,B)$ is exact. We get $\Hom_\C(D,A)=0$ as $\Hom_\C(D,B)=0$. We have $A\in\D^\perp$.

(3) For all $D\in\D$, the sequence $\Hom_\C(D,A)\to\Hom_\C(D,B)\to\Hom_\C(D,C)$ is exact. We see that $\Hom_\C(D,B)=0$ since $\Hom_\C(D,A)=0$ and $\Hom_\C(D,C)=0$. Thus, we have $B\in\D^\perp$.
\end{proof}

For later use, we record the following elementary fact.

\begin{lem}\label{formula perp}
Let $\D$ be a subcategory of $\C$.
\begin{enumerate}[\rm(1)]
\item 
The equality $^\perp((^\perp\D)^\perp)={}^\perp\D$ holds.
\item 
The equality $(^\perp(\D^\perp))^\perp=\D^\perp$ holds.
\item 
The following conditions are equivalent.
\begin{enumerate}[\rm(a)]
\item 
$\D$ is a torsion part of some weak torsion pair in $\C$.
\item 
There exists a subcategory $\D'$ such that $\D={}^\perp\D'$.
\item 
The equality $^\perp(\D^\perp)=\D$ holds.
\end{enumerate}
\item 
The following conditions are equivalent.
\begin{enumerate}[\rm(a)]
\item 
$\D$ is a torsionfree part of some weak torsion pair in $\C$.
\item 
There exists a subcategory $\D'$ such that $\D=\D'^\perp$.
\item 
The equality $(^\perp\D)^\perp=\D$ holds.
\end{enumerate}
\end{enumerate}
\end{lem}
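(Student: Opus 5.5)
The whole statement rests on the Galois connection between left and right orthogonals. For any subcategories $\D_1\subseteq\D_2$ of $\C$ one has $\D_2^\perp\subseteq\D_1^\perp$ and ${}^\perp\D_2\subseteq{}^\perp\D_1$. Moreover, every subcategory satisfies $\D\subseteq{}^\perp(\D^\perp)$ and $\D\subseteq({}^\perp\D)^\perp$. Both of these are immediate from the definitions: if $D\in\D$ and $X\in\D^\perp$, then $\Hom_\C(D,X)=0$, so $D\in{}^\perp(\D^\perp)$, and symmetrically for the other. I will prove (1) and (2) formally from these facts, and then obtain (3) and (4) from (1) and (2).

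For (1), apply the inclusion $\D'\subseteq({}^\perp\D')^\perp$ with $\D'=\D$, giving $\D\subseteq({}^\perp\D)^\perp$. Taking left orthogonals reverses inclusions, so ${}^\perp(({}^\perp\D)^\perp)\subseteq{}^\perp\D$. For the reverse inclusion, apply $\D'\subseteq{}^\perp(\D'^\perp)$ with $\D'={}^\perp\D$, which gives ${}^\perp\D\subseteq{}^\perp(({}^\perp\D)^\perp)$. Statement (2) is dual: from $\D\subseteq{}^\perp(\D^\perp)$ we get $({}^\perp(\D^\perp))^\perp\subseteq\D^\perp$, and $\D^\perp\subseteq({}^\perp(\D^\perp))^\perp$ is the second basic inclusion applied to $\D^\perp$.

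For (3), I go around the cycle (a)$\Rightarrow$(b)$\Rightarrow$(c)$\Rightarrow$(a).
\begin{itemize}
\item (a)$\Rightarrow$(b): if $(\D,\F)$ is a weak torsion pair, then $\D={}^\perp\F$ by definition, so take $\D'=\F$.
\item (b)$\Rightarrow$(c): if $\D={}^\perp\D'$, then by (1) we have ${}^\perp(\D^\perp)={}^\perp(({}^\perp\D')^\perp)={}^\perp\D'=\D$.
\item (c)$\Rightarrow$(a): set $\F=\D^\perp$. Then $\D^\perp=\F$ holds tautologically, and ${}^\perp\F={}^\perp(\D^\perp)=\D$ by (c). Hence $(\D,\F)$ is a weak torsion pair.
\end{itemize}
Statement (4) is the dual argument, using (2) in place of (1) and the candidate pair $({}^\perp\D,\D)$.

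I do not expect any real obstacle, since everything is formal. The only point needing care is to apply each basic inclusion to the correct subcategory ($\D$ versus ${}^\perp\D$ or $\D^\perp$) and to keep track of which orthogonal reverses which inclusion.
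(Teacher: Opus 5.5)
Your proposal is correct and follows essentially the same route as the paper: (1) and (2) come from the inclusions $\D\subseteq({}^\perp\D)^\perp$ and $\D\subseteq{}^\perp(\D^\perp)$ together with the fact that taking orthogonals reverses inclusions, and in (3) and (4) only (b)$\Rightarrow$(c) needs (1), respectively (2), while the other implications are immediate. You simply spell out the details the paper calls clear or leaves to duality.
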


\begin{proof}
The proofs of (2) and (4) are similar to those of (1) and (3). Therefore, we only prove (1) and (3).

(1) The inclusion $^\perp\D\subseteq{}^\perp((^\perp\D)^\perp)$ is clear. The reverse inclusion follows immediately from $\D\subseteq(^\perp\D)^\perp$.

(3) The implications (c)$\Rightarrow$(a) and (a)$\Rightarrow$(b) are clear. The implication (b)$\Rightarrow$(c) follows from (1).
\end{proof}

Clearly, there exists no nonzero object that is both "torsion" and "torsionfree".

\begin{lem}\label{lem intersection}
Let $(\T,\F)$ be a weak torsion pair in $\C$. Then every object in $\T\cap\F$ is a zero object.
\end{lem}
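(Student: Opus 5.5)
The plan is to use the identity morphism of an object in $\T\cap\F$ directly. Let $X\in\T\cap\F$.

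Since $(\T,\F)$ is a weak torsion pair, we have $\T={}^\perp\F$. Because $X\in\T$, this gives $\Hom_\C(X,F)=0$ for every $F\in\F$. Taking $F=X$, which is allowed because $X\in\F$, we get $\Hom_\C(X,X)=0$. One could equally argue from $\F=\T^\perp$; only one of the two equalities is needed.

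It follows that the identity morphism $\id_X$ equals the zero element of the abelian group $\Hom_\C(X,X)$. In a preadditive category, an object whose identity morphism is zero is a zero object. Indeed, for any object $Y$ and any morphisms $f\colon X\to Y$ and $g\colon Y\to X$, bilinearity of composition gives
$$
f=f\circ\id_X=f\circ 0=0 \quad\text{and}\quad g=\id_X\circ g=0\circ g=0.
$$
Hence each of $\Hom_\C(X,Y)$ and $\Hom_\C(Y,X)$ is the zero group, so it has exactly one element. Therefore $X$ is both initial and terminal, that is, a zero object.

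There is no real obstacle. The only point needing care is that a preadditive category need not have a chosen zero object, so the conclusion "zero object" has to be read as "initial and terminal". That is exactly what the hom-set computation above establishes.
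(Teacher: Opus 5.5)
Your proof is correct and follows essentially the same approach as the paper: an object $X\in\T\cap\F$ has $\Hom_\C(X,X)=0$, so $\id_X=0$ and $X$ is a zero object. Your added verification that $\id_X=0$ forces $X$ to be initial and terminal in a preadditive category simply fills in the step the paper leaves implicit.
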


\begin{proof}
Let $X$ be an object of $\T\cap\F$. It follows from the definition that $\Hom_\C(X,X)=0$. Therefore we have $\id_X=0$, which implies $X=0$.
\end{proof}

In the rest of this section, let $\X$ be a subcategory of $\C$. For a subcategory of $\D$, we set $\D^\perp_\X\coloneq\D^\perp\cap\X$ and $^\perp_\X\D\coloneq{}^\perp\D\cap\X$. The following lemma is an analogue of Lemma \ref{formula perp}.

\begin{lem}\label{formula perp X}
Let $\D$ be a subcategory of $\C$.
\begin{enumerate}[\rm(1)]
\item 
The equality $^\perp_\X((^\perp_\X\D)^\perp)={}^\perp_\X\D$ holds.
\item 
The equality $(^\perp(\D^\perp_\X))^\perp_\X=\D^\perp_\X$ holds.
\end{enumerate}
\end{lem}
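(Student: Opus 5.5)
Both equalities are proved by the same two-inclusion argument used for Lemma \ref{formula perp}(1), now keeping track of the intersection with $\X$. We use only two facts: orthogonals reverse inclusions, i.e.\ $\D_1\subseteq\D_2$ implies $^\perp\D_2\subseteq{}^\perp\D_1$ and $\D_2^\perp\subseteq\D_1^\perp$; and every subcategory lies in the double orthogonal, i.e.\ $\D\subseteq({}^\perp\D)^\perp$ and $\D\subseteq{}^\perp(\D^\perp)$.

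For (1), set $\E={}^\perp_\X\D={}^\perp\D\cap\X$.

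First we show $\E\subseteq{}^\perp_\X(\E^\perp)$. By definition $\Hom_\C(\E,\E^\perp)=0$, so $\E\subseteq{}^\perp(\E^\perp)$. Since also $\E\subseteq\X$, we get $\E\subseteq{}^\perp(\E^\perp)\cap\X$.

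Next we show ${}^\perp_\X(\E^\perp)\subseteq\E$. Every $D\in\D$ satisfies $\Hom_\C(\E,D)=0$, because $\E\subseteq{}^\perp\D$; hence $\D\subseteq\E^\perp$. Reversing inclusions gives ${}^\perp(\E^\perp)\subseteq{}^\perp\D$. Intersecting with $\X$ yields ${}^\perp_\X(\E^\perp)\subseteq{}^\perp\D\cap\X=\E$.

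Note that the outer operation in (1) is $^\perp_\X$ while the inner one, $(\,\cdot\,)^\perp$, is taken in all of $\C$, so there is no ambiguity in the formula.

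Part (2) is the dual argument. Set $\E=\D^\perp_\X=\D^\perp\cap\X$.

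For $\E\subseteq({}^\perp\E)^\perp_\X$: we have $\Hom_\C({}^\perp\E,\E)=0$, so $\E\subseteq({}^\perp\E)^\perp$, and $\E\subseteq\X$.

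For the reverse inclusion: $\Hom_\C(\D,\E)=0$ gives $\D\subseteq{}^\perp\E$. Reversing inclusions gives $({}^\perp\E)^\perp\subseteq\D^\perp$, and intersecting with $\X$ gives $({}^\perp\E)^\perp_\X\subseteq\E$.

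There is no real obstacle here. The only point needing care is the bookkeeping of which orthogonals are relative to $\X$ and which are taken in $\C$.
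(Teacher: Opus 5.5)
Your proof is correct and is essentially the paper's argument: one inclusion is trivial, and the other follows from antitonicity of orthogonals followed by intersecting with $\X$. The only difference is that the paper gets the nontrivial inclusion by comparing with the triple-orthogonal identity of Lemma~\ref{formula perp} in $\C$, whereas you inline that identity through the observation $\D\subseteq\E^\perp$ (resp.\ $\D\subseteq{}^\perp\E$) for your $\E$.
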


\begin{proof}
The proof of (2) is similar to that of (1). Therefore, we only prove (1).

The inclusion $\D^\perp_\X\subseteq(^\perp(\D^\perp_\X))^\perp_\X$ is clear. By definition, we see that $(^\perp(\D^\perp_\X))^\perp\cap\X$ is contained in $(^\perp(\D^\perp))^\perp\cap\X$. It follows from Lemma \ref{formula perp} (1) that the equation $(^\perp(\D^\perp))^\perp\cap\X=\D^\perp\cap\X$ holds. Thus, the desired conclusion follows.
\end{proof}

Now we consider a correspondence between weak torsion pairs in $\X$ and weak torsion pairs in $\C$.

\begin{dfn}\label{dfn fg}
Let $(\T,\F)$ be a pair of subcategories of $\C$.
\begin{enumerate}[\rm(1)]
\item 
We denote the pair $(\T\cap\X,\F\cap\X)$ of subcategories of $\X$ by $(\T,\F)\cap\X$.
\item 
We denote the weak torsion pair $(^\perp(\T^\perp),\T^\perp)$ (resp. $(^\perp\F,(^\perp\F)^\perp)$) in $\C$ by $f_\C(\T,\F)$ (resp. $g_\C(\T,\F)$).
\item 
We denote the weak torsion pair $f_\X((\T,\F)\cap\X)=(^\perp_\X((\T\cap\X)^\perp_\X),(\T\cap\X)^\perp_\X)$ (resp. $g_\X((\T,\F)\cap\X)=(^\perp_\X(\F\cap\X),(^\perp_\X(\F\cap\X))^\perp_\X)$) in $\X$ by $f^\X_\C(\T,\F)$ (resp. $g^\X_\C(\T,\F)$).
\end{enumerate}
\end{dfn}

\begin{rem}\label{rem fg}
Let $(\T,\F)$ be a pair of subcategories of $\C$. If $(\T,\F)$ is a weak torsion pair in $\C$, then one has $f_\C(\T,\F)=g_\C(\T,\F)=(\T,\F)$ by (3) and (4) of Lemma \ref{formula perp}. Similarly, if $(\T,\F)\cap\X$ is a weak torsion pair in $\X$, then one has $f^\X_\C(\T,\F)=g^\X_\C(\T,\F)=(\T,\F)\cap\X$.
\end{rem}

The following proposition states an important property of the notation introduced above.

\begin{prop}\label{prop wtp subcat}
Let $(\T,\F)$ be a weak torsion pair in $\X$, $\alpha\in\{f_\C,g_\C\}$, and $\beta\in\{f^\X_\C,g^\X_\C\}$. Then one has $\beta(\alpha(\T,\F))=\alpha(\T,\F)\cap\X=(\T,\F)$.
\end{prop}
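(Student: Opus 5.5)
The plan is to prove the second equality $\alpha(\T,\F)\cap\X=(\T,\F)$ first. The first equality then follows at once: since $(\T,\F)$ is a weak torsion pair in $\X$, Remark \ref{rem fg} gives $\beta(\alpha(\T,\F))=\alpha(\T,\F)\cap\X$ whenever $\alpha(\T,\F)\cap\X$ is a weak torsion pair in $\X$. So everything reduces to two computations, one for $\alpha=f_\C$ and one for $\alpha=g_\C$. Throughout, the hypothesis reads $\T={}^\perp_\X\F$ and $\F=\T^\perp_\X$, with the orthogonals inside $\X$ computed via $\Hom_\C$.

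For $\alpha=f_\C$, we have $\alpha(\T,\F)=({}^\perp(\T^\perp),\T^\perp)$.
\begin{itemize}
\item The torsionfree part meets $\X$ in $\T^\perp\cap\X=\T^\perp_\X=\F$, directly by hypothesis.
\item For the torsion part, rewrite ${}^\perp(\T^\perp)\cap\X$ as ${}^\perp_\X(\T^\perp)$. Since $\F=\T^\perp\cap\X\subseteq\T^\perp$, we get ${}^\perp_\X(\T^\perp)\subseteq{}^\perp_\X\F=\T$.
\item Conversely, $\T\subseteq\X$, and $\T\subseteq{}^\perp(\T^\perp)$ is tautological.
\end{itemize}
Hence ${}^\perp(\T^\perp)\cap\X=\T$. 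Equivalently, this is Lemma \ref{formula perp X}(1) applied with $\D=\F$ after substituting $\T={}^\perp_\X\F$. Indeed, that lemma gives ${}^\perp_\X(({}^\perp_\X\F)^\perp)={}^\perp_\X\F$, which is exactly ${}^\perp_\X(\T^\perp)=\T$. I would cite it in this form.

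The case $\alpha=g_\C$ is dual. Here $\alpha(\T,\F)=({}^\perp\F,({}^\perp\F)^\perp)$.
\begin{itemize}
\item The torsion part gives ${}^\perp\F\cap\X={}^\perp_\X\F=\T$.
\item The torsionfree part gives $({}^\perp\F)^\perp\cap\X=({}^\perp(\T^\perp_\X))^\perp_\X$. By Lemma \ref{formula perp X}(2) with $\D=\T$, this equals $\T^\perp_\X=\F$.
\end{itemize}

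I expect no real obstacle. The only point needing care is bookkeeping: distinguishing orthogonals taken in $\C$ from those intersected with $\X$, and checking that the substitutions $\T={}^\perp_\X\F$ and $\F=\T^\perp_\X$ match the exact shape of Lemma \ref{formula perp X}. Once both cases give $\alpha(\T,\F)\cap\X=(\T,\F)$, which is a weak torsion pair in $\X$, the Remark finishes the proof. This holds for either choice of $\beta$, since Remark \ref{rem fg} asserts $f^\X_\C$ and $g^\X_\C$ both return $(\T,\F)\cap\X$ in that situation.
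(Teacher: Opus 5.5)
Your proposal is correct and matches the paper's proof: the paper also computes $f_\C(\T,\F)\cap\X=({}^\perp_\X(({}^\perp_\X\F)^\perp),\T^\perp_\X)=(\T,\F)$ via Lemma \ref{formula perp X}, and then invokes Remark \ref{rem fg} to conclude $\beta(\alpha(\T,\F))=\alpha(\T,\F)\cap\X$. The paper leaves the $g_\C$ case as ``similar'', whereas you carry it out explicitly using part (2) of that lemma.
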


\begin{proof}
We only prove the case $\alpha=f_\C$. The proof of the case $\alpha=g_\C$ is similar.

By Lemma \ref{formula perp X}, we see that $f_\C(\T,\F)\cap\X=(^\perp(\T^\perp)\cap\X,\T^\perp\cap\X)=(^\perp_\X((^\perp_\X\F)^\perp),\T^\perp_\X)=(^\perp_\X\F,\T^\perp_\X)=(\T,\F)$. Hence, the equality $f_\C (\T,\F)\cap \X=(\T,\F)$ holds. In particular, $f_\C(\T,\F)\cap\X$ is a torsion pair in $\X$. Thus, the equality $\beta(f_\C(\T,\F))=f_\C(\T,\F)\cap\X$ follows from Remark \ref{rem fg}.
\end{proof}

From the above proposition, we observe that every weak torsion pair in $\X$ is obtained as the intersection of $\X$ with some weak torsion pair in $\C$.

\begin{cor}\label{cor wtp restrict}
Let $(\T,\F)$ be a weak torsion pair in $\X$, then there exists a torsion pair $(\T',\F')$ in $\C$ such that $(\T',\F')\cap\X$ coincides with $(\T,\F)$.
\end{cor}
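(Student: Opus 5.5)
The corollary should follow directly from Proposition \ref{prop wtp subcat}. Note that "torsion pair in $\C$" here has to mean weak torsion pair in $\C$, since $\C$ is only assumed preadditive and that is the only notion available at this stage of the section.

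The plan is to take the pair produced by $f_\C$. Let $(\T,\F)$ be a weak torsion pair in $\X$, and regard $\T$ and $\F$ as subcategories of $\C$. Set
$$
(\T',\F')\coloneq f_\C(\T,\F)=({}^\perp(\T^\perp),\T^\perp).
$$

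First, check that $(\T',\F')$ is a weak torsion pair in $\C$. By Lemma \ref{formula perp}(2) applied to $\D=\T$, we have $({}^\perp(\T^\perp))^\perp=\T^\perp$, which gives $\T'^\perp=\F'$. The equality ${}^\perp\F'=\T'$ holds by definition. This is just the remark in Definition \ref{dfn fg}(2) that $f_\C$ produces a weak torsion pair.

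Second, apply Proposition \ref{prop wtp subcat} with $\alpha=f_\C$. It gives $f_\C(\T,\F)\cap\X=(\T,\F)$, that is, $(\T'\cap\X,\F'\cap\X)=(\T,\F)$. This is exactly the required statement.

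There is no real obstacle here. The only substantive input is the identification $(\T^\perp\cap\X,\,{}^\perp(\T^\perp)\cap\X)=(\F,\T)$, and that is already contained in the proof of Proposition \ref{prop wtp subcat} via Lemma \ref{formula perp X}. The underlying reason is that $\F=\T^\perp_\X$ and $\T={}^\perp_\X\F$ hold inside $\X$. One could use $g_\C$ in place of $f_\C$ just as well.
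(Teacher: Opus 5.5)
Your proof is correct and follows the paper's argument exactly: the paper reads the corollary off Proposition~\ref{prop wtp subcat} with $\alpha=f_\C$, which gives $f_\C(\T,\F)\cap\X=(\T,\F)$. Your reading of ``torsion pair in $\C$'' as ``weak torsion pair in $\C$'' also matches how the paper later invokes this result in Corollary~\ref{cor cano seq}(1).
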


\section{Torsion pairs in exact categories}

Throughout this section, let $\E$ be an exact category in the sense of Quillen \cite{Q}. We first define a torsion pair in $\E$.

\begin{dfn}
Let $(\T,\F)$ be a pair of subcategories in $\E$. 
We say that $(\T,\F)$ is a {\em torsion pair} in $\E$ if it satisfies the following two conditions.
\begin{enumerate}[\rm(1)]
\item 
The equality $\Hom_\E(T,F)=0$ holds for all objects $T\in\T,F\in\F$.
\item
For all objects $E\in\E$, there exists a conflation $0\to T\to E\to F\to0$ with $T\in\T$ and $F\in\F$. This sequence is called a canonical sequence of $E$ with respect to $(\T,\F)$.
\end{enumerate}
We denote by $\tp(\E)$ the collection of torsion pairs in $\E$.
\end{dfn}

The following lemma shows that the terminology of a "weak" torsion pair is coherent.

\begin{lem}
If $(\T,\F)$ is a torsion pair in $\E$, then $(\T,\F)$ is a weak torsion pair in $\E$.
\end{lem}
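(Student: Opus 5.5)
To show that a torsion pair $(\T,\F)$ in $\E$ is a weak torsion pair, we need the two equalities $\T^\perp=\F$ and ${}^\perp\F=\T$, where orthogonals are taken inside $\E$. Condition (1) of the definition gives $\F\subseteq\T^\perp$ and $\T\subseteq{}^\perp\F$ at once. So only the reverse inclusions need work, and both will come from the canonical sequences supplied by condition (2).

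For $\T^\perp\subseteq\F$, take $E\in\T^\perp$ and its canonical sequence $0\to T\overset{i}\to E\overset{p}\to F\to0$. Since $T\in\T$ and $E\in\T^\perp$, we have $\Hom_\E(T,E)=0$, so the inflation $i$ is zero. An inflation in an exact category is a kernel of the deflation $p$, hence a monomorphism. Therefore $i=0$ forces $T=0$, because $i\circ\id_T=i\circ 0$ gives $\id_T=0$.

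With $T=0$, the deflation $p$ is a cokernel of $0\to E$, so $p$ is an isomorphism. Thus $E\cong F\in\F$. Since subcategories are strictly full, $E\in\F$.

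Dually, take $E\in{}^\perp\F$ with canonical sequence as above. Then $p\in\Hom_\E(E,F)=0$. A deflation is an epimorphism, being a cokernel, so $F=0$. Then $i$ is a kernel of $E\to0$, hence an isomorphism, and $E\cong T\in\T$.

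The only point needing care is the exact-category bookkeeping: that inflations are monic and deflations are epic, and that a conflation with a zero end term makes the other map an isomorphism. Both follow from the fact that conflations are kernel–cokernel pairs. Everything else is immediate from the definitions.
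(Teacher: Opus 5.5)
Your proposal is correct and follows the paper's proof: the inclusions $\F\subseteq\T^\perp$ and $\T\subseteq{}^\perp\F$ come from the Hom-vanishing condition, and the reverse inclusions come from the canonical sequence, where the relevant map vanishes and the remaining map becomes an isomorphism. You also spell out the exact-category details (inflations are monic, deflations are epic, and strict fullness) that the paper's one-line ``we have $E\cong F$'' leaves implicit.
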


\begin{proof}
It is straightforward to see that $\F$ (\emph{resp. $\T$}) is contained in $\T^\perp$ (\emph{resp. ${}^\perp\F$}). 
Let $E$ be an object in $\T^\perp$. Then there exists a conflation $0\to T\to E\to F\to0$ with $T\in \T$ and $F\in \F$. Since $\Hom_\E(T,E)=0$, we have $E\cong F$. Therefore, $E$ belongs to $\F$, which implies $\T^\perp\subseteq\F$. Similarly, we have $^\perp\F\subseteq\T$. Hence, we conclude that $(\T,\F)$ is a weak torsion pair.
\end{proof}

By Lemma \ref{lem closed under}, we see that the torsion part and the torsionfree part of a weak torsion pair in $\E$ have the following closedness property.

\begin{cor}
Let $(\T,\F)$ be a weak torsion pair in $\E$. Then $\T$ \emph{(resp. $\F$)} is closed under extensions and quotient objects \emph{(resp.} extensions and subobjects\emph{)}.
\end{cor}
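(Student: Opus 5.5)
Since $(\T,\F)$ is a weak torsion pair, we have $\T={}^\perp\F$ and $\F=\T^\perp$ in $\E$. So it suffices to verify the closedness properties for orthogonal subcategories and then specialize to $\D=\F$ and $\D=\T$. Closure of $\F$ under subobjects is Lemma \ref{lem closed under}(1), and closure of $\T$ under quotient objects is Lemma \ref{lem closed under}(2). Here subobjects and quotients in $\E$ come from inflations and deflations, which are monomorphisms and epimorphisms in the underlying additive category, so those lemmas apply directly.

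For extensions, we take a conflation $0\to A\overset{f}\to B\overset{g}\to C\to0$ in $\E$. The key point is that in an exact category an inflation is a kernel of its deflation, and a deflation is a cokernel of its inflation. Hence, for every object $X$, the sequences
$$0\to\Hom_\E(X,A)\to\Hom_\E(X,B)\to\Hom_\E(X,C)$$
and
$$0\to\Hom_\E(C,X)\to\Hom_\E(B,X)\to\Hom_\E(A,X)$$
are exact. In particular $f$ is a pseudo-kernel of $g$, and $g$ is a pseudo-cokernel of $f$.

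Now Lemma \ref{lem closed under}(3) with $\D=\T$ shows that $A,C\in\F$ implies $B\in\F$. Likewise Lemma \ref{lem closed under}(4) with $\D=\F$ shows that $A,C\in\T$ implies $B\in\T$.

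The only point requiring care is identifying conflations with kernel–cokernel pairs in Quillen's axioms. This is part of the definition of an exact category, so there is no real obstacle; the argument is a direct application of Lemma \ref{lem closed under}.
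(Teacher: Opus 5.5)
Your proof is correct and follows the paper's route: the paper likewise deduces the corollary directly from Lemma \ref{lem closed under}, parts (1)--(4), applied to $\T={}^\perp\F$ and $\F=\T^\perp$. Your only addition is spelling out what the paper leaves implicit, namely that a conflation is a kernel--cokernel pair, so the inflation is a pseudo-kernel of the deflation and the deflation is a pseudo-cokernel of the inflation.
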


The following proposition says that a canonical sequence of a torsion pair $(\T,\F)$ in an abelian category is unique up to isomorphisms.

\begin{prop}
Suppose that $\E$ is an abelian category. Let $(\T,\F)$ be a torsion pair in $\E$, and $E$ an object in $\E$. If there exist two exact sequences $0\to T_1\to E\to F_1\to 0$ and $0\to T_2\to E\to F_2\to 0$ with $T_1,T_2\in \T$ and $F_1,F_2\in \F$, then one has $T_1\cong T_2$ and $F_1\cong F_2$.
\end{prop}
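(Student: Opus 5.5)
Given two canonical sequences $0\to T_1\xrightarrow{i_1} E\xrightarrow{p_1} F_1\to 0$ and $0\to T_2\xrightarrow{i_2} E\xrightarrow{p_2} F_2\to 0$, the plan is to show that $i_1$ and $i_2$ have the same image as subobjects of $E$. The cokernels $F_1\cong E/T_1$ and $F_2\cong E/T_2$ will then be isomorphic as well.

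First consider the composite $p_2\circ i_1\colon T_1\to F_2$. Since $T_1\in\T$ and $F_2\in\F$, condition (1) of the definition of a torsion pair gives $p_2\circ i_1=0$. Because $i_2$ is a kernel of $p_2$ (the sequence is exact in the abelian category $\E$), $i_1$ factors uniquely as $i_1=i_2\circ u$ for some $u\colon T_1\to T_2$. The same argument with the roles of the two sequences exchanged produces $v\colon T_2\to T_1$ with $i_2=i_1\circ v$.

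Next, $i_1\circ(v\circ u)=i_2\circ u=i_1$. Since $i_1$ is a monomorphism, $v\circ u=\id_{T_1}$, and symmetrically $u\circ v=\id_{T_2}$. Hence $u$ is an isomorphism $T_1\cong T_2$ compatible with the inclusions into $E$.

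Finally, $p_1$ is a cokernel of $i_1$ and $p_2$ is a cokernel of $i_2=i_1\circ u^{-1}\cdots$; more precisely, since $i_2=i_1\circ v$ with $v$ an isomorphism, $i_1$ and $i_2$ have the same cokernel up to unique isomorphism. Explicitly, $p_2\circ i_1=0$ gives $w\colon F_1\to F_2$ with $w\circ p_1=p_2$, and symmetrically $w'$ with $w'\circ p_2=p_1$. Since $p_1,p_2$ are epimorphisms, $w'w=\id$ and $ww'=\id$, so $F_1\cong F_2$.

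There is no real obstacle. The only point needing care is using abelianness, namely that an exact sequence $0\to T\to E\to F\to0$ makes $T\to E$ a kernel and $E\to F$ a cokernel, in order to obtain the factorizations.
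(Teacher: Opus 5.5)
Your proof is correct, and it takes a different and more economical route than the paper's.

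The paper forms the $3\times3$ diagram built from the two sequences. Its corner terms are $X_4\cong T_1\cap T_2$, the image $X_2$ of $T_1$ in $F_2$, the image $X_3$ of $T_2$ in $F_1$, and $X_1\cong E/(T_1+T_2)$. It then observes that $X_2$ is a quotient of $T_1\in\T$ and a subobject of $F_2\in\F$, so $X_2=0$; likewise $X_3=0$. From this it reads off $T_1\cong X_4\cong T_2$ and $F_1\cong X_1\cong F_2$.

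You instead use only the vanishing of the cross composites $p_2i_1$ and $p_1i_2$, together with the universal properties of kernels and cokernels. This gains you three things:
\begin{enumerate}[\rm(1)]
\item You need neither the diagram nor the closure properties of $\T$ and $\F$.
\item Your argument works verbatim in any exact category, since a conflation is by definition a kernel--cokernel pair. Forming $T_1\cap T_2$ and $E/(T_1+T_2)$ needs abelianness, or an ambient abelian category. This matters because subcategories the paper studies, such as $\CM(R)$ or $\Ass^{-1}\Delta$ for general $\Delta$, are exact but usually not abelian.
\item You show explicitly that the isomorphisms commute with the maps to and from $E$. So the torsion subobject of $E$ is unique as a subobject, not just up to abstract isomorphism. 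The paper's diagram gives this too, but only implicitly.
\end{enumerate}
In exchange, the paper's diagram serves as a template it reuses in Proposition~\ref{prop cano seq split}. There the composites $T_M\to M\to{}_MT$ and $F_M\to M\to{}_MF$ need not vanish, so a pure factorization argument is less direct.

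One cosmetic point: delete the half-sentence ``$p_2$ is a cokernel of $i_2=i_1\circ u^{-1}\cdots$''. The explicit argument with $w$ and $w'$ that follows is self-contained and does not even use $u$ and $v$.
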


\begin{proof}
We consider the following pullback diagram.
$$
\begin{CD}
 @. 0 @. 0 @. 0 @.\\
@. @VVV @VVV @VVV @.\\
0 @>>> X_4 @>>> T_2 @>>> X_3 @>>> 0\\
@. @VVV @VVV @VVV @.\\
0 @>>> T_1 @>>> E @>>> F_1 @>>> 0\\
@. @VVV @VVV @VVV @.\\
0 @>>> X_2 @>>> F_2 @>>> X_1 @>>> 0\\
@. @VVV @VVV @VVV @.\\
 @. 0 @. 0 @. 0 @.
\end{CD}
$$
The object $X_2$ is a quotient object of $T_1$ and a subobject of $F_2$, so that we have $X_2=0$. Similarly, we obtain $X_3=0$. This yields $T_1\cong X_4 \cong T_2$ and $F_1\cong X_1\cong F_2$.
\end{proof}

Let $(\T,\F)$ be a torsion pair in $\E$ and $E$ an indecomposable object in $E$. Obviously, the canonical sequence of $E$ with respect to $(\T,\F)$ splits if and only if $E$ is contained in either $\T$ or $\F$. The following proposition shows when every canonical sequence in $\E$ with respect to $(\T,\F)$ splits.

\begin{prop}\label{prop cano seq split}
Suppose $\E$ is an abelian category. If both $(\T,\F)$ and $(\F,\T)$ are torsion pairs in $\E$, then all canonical sequences split. In particular, all indecomposable objects in $\E$ belong to either $\T$ or $\F$.
\end{prop}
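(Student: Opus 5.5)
Take an object $E$ of $\E$ with its canonical sequence $0\to T\xrightarrow{i} E\xrightarrow{p} F\to0$ for $(\T,\F)$, so $T\in\T$ and $F\in\F$. The plan is to build a retraction of $i$, namely $r\colon E\to T$ with $r i=\id_T$, from the canonical sequence of $E$ with respect to the second torsion pair $(\F,\T)$. The input we use is $\Hom_\E(T',F')=0$ and $\Hom_\E(F',T')=0$ for all $T'\in\T$, $F'\in\F$; the latter holds because $(\F,\T)$ is a torsion pair.

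Because $(\F,\T)$ is a torsion pair, we also have an exact sequence $0\to F'\xrightarrow{j} E\xrightarrow{q} T'\to0$ with $F'\in\F$ and $T'\in\T$. First, the composite $p j\colon F'\to F$ need not vanish, so we look instead at $q i\colon T\to T'$. Next, we show $q i$ is a monomorphism. Its kernel $K$ is a subobject of $T$, and $i$ maps $K$ into $\Ker q=F'$. So we get a map $K\to F'$ whose composite with $j$ is the monomorphism $K\hookrightarrow T\xrightarrow{i}E$, hence $K\to F'$ is itself monic. But $K$ is a subobject of $T$ and a subobject of $F'$, and we only know $\T$ is closed under quotients, not subobjects, so this does not finish the argument.

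The better route is to use the symmetry directly. Since $\F$ is the torsion part of $(\F,\T)$, the class $\F$ is closed under quotient objects; since $\T$ is the torsionfree part of $(\F,\T)$, the class $\T$ is closed under subobjects. Together with the corollary that $\T$ is closed under quotients and $\F$ under subobjects, this shows that $\T$ and $\F$ are each closed under both subobjects and quotients. Now $K$ lies in $\T$ as a subobject of $T$, and in $\F$ as a subobject of $F'$. By Lemma \ref{lem intersection} (the pair is weak torsion by the earlier lemma), $K=0$, so $q i$ is monic.

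Symmetrically, $\operatorname{Coker}(q i)$ is a quotient of $T'$, hence lies in $\T$. It is also a quotient of $E/i(T)\cong F$, since $q$ is epi and $q$ induces an epimorphism $F\cong E/T\to T'/q i(T)$. Hence it lies in $\F$ and is zero. Therefore $q i$ is an isomorphism, and $r:=(q i)^{-1}q$ satisfies $r i=\id_T$, so the canonical sequence splits. For the second claim, if $E$ is indecomposable then $E\cong T\oplus F$ forces $T=0$ or $F=0$, so $E\in\F$ or $E\in\T$. The main obstacle is the cokernel step: we must check carefully in the abelian category that $T'/\mathrm{im}(q i)$ is a quotient of $F$. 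We will do this with a pushout or snake-lemma diagram comparing the two sequences.
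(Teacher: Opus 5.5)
Your proof is correct and is essentially the paper's: the kernel and cokernel of $q i\colon T\to T'$, which you show lie in $\T\cap\F$ because both classes are closed under subobjects and quotients, are exactly the corner objects $X_4$ and $X_1$ of the paper's $3\times 3$ pullback diagram. The cokernel step you flag as the main obstacle needs no further diagram: $E\xrightarrow{q}T'\to\operatorname{Coker}(qi)$ is an epimorphism that vanishes on $i$, so it factors through $p$ as an epimorphism from $F$. Your explicit retraction $(qi)^{-1}q$ also makes precise the paper's closing ``therefore the canonical sequences split.''
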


\begin{proof}
By Lemma \ref{lem closed under}, the subcategories $\T,\F$ of $\E$ are Serre. For all $M \in \E$, there exist two exact sequences $0 \to T_M \to M \to {}_MF \to 0$ and $0\to F_M \to M \to {}_MT\to 0$ with $T_M,{}_MT \in \T$ and $F_M,{}_MF \in \F$. There exists a pullback diagram.
$$
\begin{CD}
 @. 0 @. 0 @. 0 @.\\
@. @VVV @VVV @VVV @.\\
0 @>>> X_4 @>>> T_M @>>> X_3 @>>> 0\\
@. @VVV @VVV @VVV @.\\
0 @>>> F_M @>>> M @>>> {}_MT @>>> 0\\
@. @VVV @VVV @VVV @.\\
0 @>>> X_2 @>>> {}_MF @>>> X_1 @>>> 0\\
@. @VVV @VVV @VVV @.\\
 @. 0 @. 0 @. 0 @.
\end{CD}
$$

The object $X_4$ is a subobject of $T_M$ and $F_M$, so $X_4$ belongs to $\T$ and $\F$. Similarly, $X_1$ is a quotient object of ${}_MT$ and ${}_MF$, so $X_1$ belongs to $\T$ and $\F$. Hence $X_1=X_4=0$. We obtain ${}_MF \cong X_2 \cong F_M$ and ${}_MT \cong X_3 \cong T_M$. Therefore the canonical sequences split. 
\end{proof}

We recall the notion of a cotorsion pair. For a subcategory $\D$ of $\E$, denote by $\D^{\perp_1}$ (resp. $^{\perp_1}\D$) the subcategory of $\E$ consisting of objects $E\in\E$ with $\Ext^1_\C(D,E)=0$ (resp. $\Ext^1_\C(E,D)=0$) for all $D\in\D$.

\begin{dfn}\label{dfn cotorsion pair}
Let $(\T,\F)$ be a pair of subcategories of $\E$.
\begin{enumerate}[\rm(1)]
\item 
We say that $(\T,\F)$ is a {\em weak cotorsion pair} in $\E$ if $\T^{\perp_1}=\F$ and $^{\perp_1}\F=\T$.
\item 
We say that $(\T,\F)$ is a {\em cotorsion pair} in $\E$ if it satisfies the following three conditions.
\begin{enumerate}[\rm(a)]
\item 
The subcategories $\T$ and $\F$ of $\E$ are closed under direct summands.
\item 
The equality $\Ext^1_\C(T,F)=0$ holds for all objects $T\in\T,F\in\F$.
\item 
For all objects $E\in\E$, there exist conflations $0\to F\to T\to E\to0$ and $0\to E\to F'\to T'\to0$ with $T,T'\in\T$ and $F,F'\in\F$.
\end{enumerate}
\end{enumerate}
\end{dfn}

As in the case of a torsion pair, the terminology of a ``weak" cotorsion pair is coherent.

\begin{lem}
If $(\T,\F)$ is a cotorsion pair in $\E$, then $(\T,\F)$ is a weak cotorsion pair in $\E$.
\end{lem}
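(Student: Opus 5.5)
We must show that a cotorsion pair $(\T,\F)$ is a weak cotorsion pair, i.e.\ $\T^{\perp_1}=\F$ and $^{\perp_1}\F=\T$. The plan mirrors the proof of the analogous lemma for torsion pairs: condition (b) gives the inclusions $\F\subseteq\T^{\perp_1}$ and $\T\subseteq{}^{\perp_1}\F$ at once. The reverse inclusions will come from the approximation conflations in (c), combined with the vanishing of $\Ext^1$ and closure under direct summands from (a).

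For $\T^{\perp_1}\subseteq\F$, let $E\in\T^{\perp_1}$ and take the conflation $0\to E\to F'\to T'\to0$ from (c), with $F'\in\F$ and $T'\in\T$. Since $T'\in\T$, we have $\Ext^1_\E(T',E)=0$. The class of this conflation lies in $\Ext^1_\E(T',E)$, so it is zero and the conflation splits. Hence $E$ is a direct summand of $F'$, and (a) gives $E\in\F$.

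Dually, for $^{\perp_1}\F\subseteq\T$, let $E\in{}^{\perp_1}\F$ and take $0\to F\to T\to E\to0$ with $F\in\F$ and $T\in\T$. Its class lies in $\Ext^1_\E(E,F)=0$, so the conflation splits. Then $E$ is a direct summand of $T$, and (a) gives $E\in\T$.

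The only point needing care is the identification of $\Ext^1_\E$ with Yoneda equivalence classes of conflations in the exact category $\E$. Under this identification, the vanishing of $\Ext^1$ means that every conflation with those end terms splits. This is standard for exact categories in Quillen's sense, so I do not expect a real obstacle. The argument is short and the rest is routine.
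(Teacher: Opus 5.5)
Your proof is correct and follows the paper's argument: condition (b) gives the easy inclusions, then the conflations from (c) split because $\Ext^1$ vanishes, and closure under direct summands from (a) finishes. Your version of the inclusion $\T^{\perp_1}\subseteq\F$ ($E$ is a summand of $F'$, hence lies in $\F$) is the correct one; the paper's text slips here and says $E$ is a summand of $T$ and belongs to $\T$.
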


\begin{proof}
It is straightforward to see that $\F$ (\emph{resp. $\T$}) is contained in $\T^{\perp_1}$ (\emph{resp. ${}^{\perp_1}\F$}). Let $E$ be an object in $\T^{\perp_1}$. Then there exists a conflation $0\to E\to F\to T\to0$ such that $T\in \T$ and $F\in \F$. It follows from $\Ext^1_\C(T,E)=0$ that the sequence $0\to E\to F\to T\to0$ splits, which implies that $E$ is a direct summand of $T$. Since $\T$ is closed under direct summand in $\E$, $E$ belongs to $\T$. Therefore, we have $\T^{\perp_1}\subseteq\F$. Similarly, we see that $^{\perp_1}\F\subseteq\T$. Hence, we conclude that $(\T,\F)$ is a weak cotorsion pair.
\end{proof}

For a subcategory $\D$ of $\E$, we denote by $\add\D$ the subcategory of $\E$ consisting of direct summands of finite direct sums of objects in $\D$. For a module $M$ over a local ring $R$, we denote by $\Omega_RM$ the first syzygy module of $M$ in its minimal $R$-free resolution. For a subcategory $\X$ of $\mod R$, we denote by $\Omega_R\X$ the subcategory of $\mod R$ consisting of $R$-modules $M$ such that there exists an exact sequence $0\to M\to F\to X\to0$ of $R$-modules where $X\in\X$ and $F$ is a free $R$-module. Note that an $R$-module $M$ belongs to $\Omega_R\X$ if and only if there exist an $R$-module $X\in\X$ and a free $R$-module $F$ such that $M\cong\Omega_RX\oplus F$. When $R$ is Cohen--Macaulay, we denote by $\CM(R)$ the subcategory of $\mod R$ consisting of maximal Cohen--Macaulay $R$-modules. The following proposition shows that a torsion pair in $\CM(R)$ induces a cotorsion pair in $\CM(R)$ if $R$ is Gorenstein.

\begin{prop}
Let $R$ be a Gorenstein local ring. If $(\T,\F)$ is a torsion pair in $\CM(R)$, then the pair $(\add(\T\cup\{R\}),\add(\Omega\F))$ is a cotorsion pair in $\CM(R)$.
\end{prop}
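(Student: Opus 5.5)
The plan is to verify the three conditions of Definition \ref{dfn cotorsion pair}(2) directly. Two facts drive everything. First, since $R$ is Gorenstein, $\Ext^i_R(X,R)=0$ for all $i>0$ and all $X\in\CM(R)$, and every $E\in\CM(R)$ admits a cosyzygy sequence $0\to E\to Q\to E^1\to0$ with $Q$ free and $E^1\in\CM(R)$. Second, by definition of $\Omega\F$, the zero module lies in $\F$, so every free module lies in $\Omega\F$. Throughout I write $\T':=\add(\T\cup\{R\})$ and $\F':=\add(\Omega\F)$.

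Condition (a) holds because both subcategories are of the form $\add(-)$.

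For (b), additivity of $\Ext^1$ reduces the claim to $X\in\T\cup\{R\}$ and $M\in\Omega\F$, that is, $M$ fits into $0\to M\to P\to F\to0$ with $P$ free and $F\in\F$.
\begin{itemize}
\item If $X=R$, then $\Ext^1(R,M)=0$ trivially.
\item If $X\in\T\subseteq\CM(R)$, apply $\Hom(X,-)$ to this sequence to get the exact sequence $\Hom(X,F)\to\Ext^1(X,M)\to\Ext^1(X,P)$.
\item Here $\Ext^1(X,P)=0$ by the Gorenstein property, and $\Hom(X,F)=0$ since $(\T,\F)$ is a torsion pair. Hence $\Ext^1(X,M)=0$.
\end{itemize}

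For the first approximation in (c), take the canonical sequence $0\to T\to E\to F\to0$ in $\CM(R)$ and a syzygy sequence $0\to\Omega F\to P\to F\to0$. Pull the latter back along $E\to F$ to obtain a module $Y$ with two exact sequences:
\begin{itemize}
\item $0\to\Omega F\to Y\to E\to0$;
\item $0\to T\to Y\to P\to0$.
\end{itemize}
The second sequence splits because $P$ is free, so $Y\cong T\oplus P\in\T'$. The first sequence is then the required conflation: $\Omega F\in\Omega\F$, and all its terms are maximal Cohen--Macaulay.

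For the second approximation in (c), start from the cosyzygy sequence $0\to E\to Q\to E^1\to0$ and the canonical sequence $0\to T_1\to E^1\to F_1\to0$ of $E^1$. Pull back $Q\to E^1$ along $T_1\to E^1$ to get a module $Z$ with two exact sequences:
\begin{itemize}
\item $0\to E\to Z\to T_1\to0$;
\item $0\to Z\to Q\to F_1\to0$.
\end{itemize}
The second sequence shows $Z\in\Omega\F$ directly from the definition, since $Q$ is free and $F_1\in\F$. Moreover $Z\in\CM(R)$, being an extension of $E$ by $T_1$. So the first sequence is a conflation $0\to E\to Z\to T_1\to0$ with $Z\in\F'$ and $T_1\in\T'$.

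The only delicate point is this second approximation. It needs the Gorenstein hypothesis, through the existence of maximal Cohen--Macaulay cosyzygies, and it needs the observation that $\Omega\F$ already contains the free modules, so that no Schanuel-type correction is required.
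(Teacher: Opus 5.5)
Your proof is correct. The first approximation in (c) is exactly the paper's pullback argument. Your treatment of (b) and of the second approximation in (c) takes a different route.

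For (b), the paper computes $\Ext^1_R(T,\Omega_RF)\cong\underline\Hom_R(\Omega_RT,\Omega_RF)\cong\underline\Hom_R(T,F)=0$. This relies on the syzygy functor being fully faithful on the stable category of $\CM(R)$ over a Gorenstein ring. You use only the long exact sequence for $\Hom_R(X,-)$ together with $\Ext^1_R(X,R)=0$ for $X\in\CM(R)$. That is more elementary, and it also handles arbitrary members of $\Omega\F$ (not just minimal syzygies) and the summand $R$ explicitly.

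For the second approximation, the paper applies the first approximation to the cosyzygy $Y$ of $M$, obtaining $0\to Z\to W\to Y\to0$. It then pulls back along $G\to Y$, so its middle term is $U\cong Z\oplus G$ and its end term $W$ carries an extra free summand. You instead pull back the cosyzygy map along the torsion part $T_1\hookrightarrow E^1$, which amounts to taking the preimage of $T_1$ in $Q$. This lands in $\Omega\F$ and $\T$ on the nose. The paper's route saves writing by reusing the first construction; yours is more direct. Both use the Gorenstein hypothesis in (c) at the same place, namely the existence of a maximal Cohen--Macaulay cosyzygy.

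Two small inaccuracies, neither affecting the argument:
\begin{itemize}
\item Your remark that $\Omega\F$ must contain the free modules is never used. The membership $Z\in\Omega\F$ comes straight from $0\to Z\to Q\to F_1\to0$ and the definition of $\Omega\F$.
\item The fact that $0\in\F$ holds because $\F=\T^\perp\cap\CM(R)$, not because of the definition of $\Omega\F$.
\end{itemize}
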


\begin{proof}
We prove that the pair $(\add(\T\cup\{R\}),\add(\Omega\F))$ of subcategories of $\CM(R)$ satisfies conditions (a), (b), and (c) of Definition \ref{dfn cotorsion pair} (2).

(a) Clear.

(b) If $T\in\T$ and $F\in\F$, then we have $\Ext^1_R(T,\Omega_RF)\cong\underline\Hom_R(\Omega_RT,\Omega_RF)\cong\underline\Hom_R(T,F)=0$ since $\Hom_R(T,F)=0$. Here, for $R$-modules $M$ and $N$, we set $\underline\Hom_R(M,N)=\Hom_R(M,N)/P(M,N)$, where $P(M,N)$ denotes the submodule of $\Hom_R(M,N)$ consisting of $R$-homomorphisms from $M$ to $N$ which factor through a $R$-free module.

(c) Let $M$ be an $R$-module belonging to $\CM(R)$. Then there exists a short exact sequence $0\to L\to M\to N\to0$ such that $L\in\T$ and $N\in\F$. Then there is a pullback diagram
\[\xymatrix{
&&0\ar[d]&0\ar[d]&\\
&&\Omega_RN\ar@{=}[r]\ar[d]&\Omega_RN\ar[d]&\\
0\ar[r]&L\ar[r]\ar@{=}[d]&X\ar[r]\ar[d]&F\ar[r]\ar[d]&0\\
0\ar[r]&L\ar[r]&M\ar[r]\ar[d]&N\ar[r]\ar[d]&0\\
&&0&0&
}\]
where $F$ is a free $R$-module. Since the middle row in the above diagram splits, we have $X\cong L\oplus F\in\add(\T\cup\{R\})$. Therefore, we obtain a short exact sequence $0\to \Omega_RN\to X\to M\to0$ with $X\in\add(\T\cup\{R\})$ and $\Omega_RN\in\add(\Omega\F)$.

Moreover, since $R$ is Gorenstein, there is an exact sequence $0\to M\to G\to Y\to0$ such that $G$ is $R$-free and $Y\in\CM(R)$. Repeating the above argument, there is a short exact sequence $0\to Z\to W\to Y\to0$ such that $W\in\add(\T\cup\{R\})$ and $Z\in\add(\Omega\F)$. We consider the following pullback diagram.
\[\xymatrix{
&&0\ar[d]&0\ar[d]&\\
&&M\ar@{=}[r]\ar[d]&M\ar[d]&\\
0\ar[r]&Z\ar[r]\ar@{=}[d]&U\ar[r]\ar[d]&G\ar[r]\ar[d]&0\\
0\ar[r]&Z\ar[r]&W\ar[r]\ar[d]&Y\ar[r]\ar[d]&0\\
&&0&0&
}\]
As the middle row in the above diagram splits, we have $U\cong Z\oplus G\in\add(\Omega\F)$. We obtain a short exact sequence $0\to M\to U\to W\to0$ with $U\in\add(\Omega_R\F)$ and $W\in\add(\T\cup\{R\})$.
\end{proof}

\section{Torsion pairs in module categories}

Throughout this section, let $R$ be a ring. We investigate torsion pairs in a fixed subcategory of $\mod R$ and establish our main theorems. For a subcategory $\D$ of $\mod R$, the notation $\D^\perp$ (resp. $^\perp\D$) refers to the right (resp. left) orthogonal subcategory in $\mod R$. For a topological space $X$, a subset $W$ of $X$ is called {\em specialization-closed} if the closure of $\{w\}$ in $X$ is contained in $W$ for all $w\in W$. We denote by $\spcl X$ (resp. $\cl X$) by the set of all specialization-closed subsets (resp. closed subsets) of $X$. A subset $W$ of $\Spec R$ is specialization-closed with respect to the
Zariski topology if and only if for all prime ideals $\p\in W$ and $\q\in\Spec R$
with $\p\subseteq\q$, we have $\q\in W$. We begin with the definition and basic properties of $W$-torsion and $W$-torsionfree modules for a specialization-closed subset $W$ of $\Spec R$.

\begin{dfn}
Denote by $\Mod R$ the category of all (possibly infinitely generated) $R$-modules.
Let $W$ be a specialization-closed subset of $\Spec R$, and let $M\in\Mod R$.
\begin{enumerate}[\rm(1)]
\item
We denote by $\Gamma_W(M)$ the submodule $\{x\in M\mid\Supp(Rx)\subseteq W\}$ of $M$.
\item
We say that $M$ is {\em $W$-torsion} if $\Gamma_W(M)=M$.
\item 
We say that $M$ is {\em $W$-torsionfree} if $\Gamma_W(M)=0$.
\end{enumerate}
\end{dfn}

\begin{rem}\label{rem gamma}
Let $W$ be a specialization-closed subset of $\Spec R$ and $M\in\Mod R$.
\begin{enumerate}[\rm(1)]
\item
The $R$-module $M$ is $W$-torsion if and only if $\Supp M\subseteq W$.
\item
The $R$-module $M$ is $W$-torsionfree if and only if $\Ass M\cap W=\emptyset$.
\item
The $R$-module $\Gamma_W(M)$ is $W$-torsion.
\item
The $R$-module $M/\Gamma_W(M)$ is $W$-torsionfree.
\item 
The functor $\Gamma_W(-):\Mod R\to\Mod R$ is well-defined and left exact.
\end{enumerate}
\end{rem}

We prove the following lemma, which will be used in the proof of Lemma \ref{formula gamma ass}.

\begin{lem}\label{lem ex seq torsion}
Let $W$ be a specialization-closed subset of $\Spec R$.
Let $0\to L\to M\to N$ be a short exact sequence in $\Mod R$.
If $L$ is $W$-torsion and $N$ is $W$-torsionfree, then the induced homomorphism $L\to \Gamma_W(M)$ is an isomorphism.
\end{lem}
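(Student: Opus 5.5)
Call the maps $f\colon L\to M$ and $g\colon M\to N$. We will show that $f$ restricts to an isomorphism of $L$ onto $\Gamma_W(M)$, using only left exactness of $\Gamma_W$ (Remark \ref{rem gamma}(5)) and the descriptions in Remark \ref{rem gamma}(1),(2).

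First, the map $L\to M$ factors through $\Gamma_W(M)$. Since $L$ is $W$-torsion, $\Gamma_W(L)=L$. Functoriality of $\Gamma_W$ gives that $f$ sends $\Gamma_W(L)$ into $\Gamma_W(M)$. This is also direct: for $x\in L$ the module $R f(x)$ is a quotient of $Rx$, so $\Supp(Rf(x))\subseteq\Supp(Rx)\subseteq W$. The induced map $L\to\Gamma_W(M)$ is injective because $f$ is.

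For surjectivity, apply $\Gamma_W$ to the exact sequence $0\to L\to M\to N$. Left exactness gives an exact sequence
$$0\to\Gamma_W(L)\to\Gamma_W(M)\to\Gamma_W(N).$$
Since $N$ is $W$-torsionfree, $\Gamma_W(N)=0$. Hence $\Gamma_W(L)=L\to\Gamma_W(M)$ is surjective, and combined with injectivity it is an isomorphism.

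If one prefers not to invoke left exactness, surjectivity can be checked elementwise. Take $x\in\Gamma_W(M)$. Then $Rg(x)$ is a quotient of $Rx$, so $Rg(x)$ is a submodule of $N$ with support in $W$. Thus $g(x)\in\Gamma_W(N)=0$, and exactness at $M$ gives $x\in\operatorname{Im} f$. There is no real obstacle here; the only care needed is that supports pass to quotients and that the sequence is exact at $L$ and $M$.
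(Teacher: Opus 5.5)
Your proof is correct and follows the paper's argument: apply left exactness of $\Gamma_W$ to $0\to L\to M\to N$ and use $\Gamma_W(L)=L$ and $\Gamma_W(N)=0$. Your elementwise verification of injectivity and surjectivity is a valid unpacking of the same argument.
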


\begin{proof}
By the left exactness of $\Gamma_W(-)$, the sequence $0\to \Gamma_W(L)\to \Gamma_W(M)\to\Gamma_W(N)$ is exact. The assertion follows since $\Gamma_W(L)=L$ and $\Gamma_W(N)=0$.
\end{proof}

The following lemma plays an important role for the proof of the main theorem.

\begin{lem}\label{formula gamma ass}
Let $W$ be a specialization-closed subset of $\Spec R$.
Let $M\in\Mod R$.
Then $\Ass(\Gamma_W(M))=\Ass M\cap W$ and $\Ass(M/\Gamma_W(M))=\Ass M\setminus W$.
\end{lem}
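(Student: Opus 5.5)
We work with the short exact sequence $0\to\Gamma_W(M)\to M\to M/\Gamma_W(M)\to0$ and use the standard inclusions $\Ass L\subseteq\Ass M\subseteq\Ass L\cup\Ass N$ for a short exact sequence $0\to L\to M\to N\to0$ in $\Mod R$. We also use Remark~\ref{rem gamma}: $\Gamma_W(M)$ is $W$-torsion, so $\Ass\Gamma_W(M)\subseteq\Supp\Gamma_W(M)\subseteq W$; and $M/\Gamma_W(M)$ is $W$-torsionfree, so $\Ass(M/\Gamma_W(M))\cap W=\emptyset$.

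For the first equality, the inclusion $\Ass\Gamma_W(M)\subseteq\Ass M\cap W$ follows from the two facts just recalled, since $\Gamma_W(M)\subseteq M$. Conversely, take $\p\in\Ass M\cap W$ and write $\p=\operatorname{ann}(x)$ with $x\in M$. Then $Rx\cong R/\p$, so $\Supp(Rx)=\V(\p)$. This set is contained in $W$ because $W$ is specialization-closed and $\p\in W$. Hence $x\in\Gamma_W(M)$, and $\p\in\Ass\Gamma_W(M)$.

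For the second equality, one inclusion is direct: $\Ass M\subseteq\Ass\Gamma_W(M)\cup\Ass(M/\Gamma_W(M))$. A prime $\p\in\Ass M\setminus W$ is not in $\Ass\Gamma_W(M)$, because the latter lies in $W$; so $\p\in\Ass(M/\Gamma_W(M))$. This gives $\Ass M\setminus W\subseteq\Ass(M/\Gamma_W(M))$.

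The reverse inclusion is the main obstacle, since associated primes of a quotient need not be associated primes of $M$. Take $\p\in\Ass(M/\Gamma_W(M))$; we already know $\p\notin W$. Choose $x\in M$ whose image $\bar x$ in $M/\Gamma_W(M)$ has annihilator $\p$, so that $\p x\subseteq\Gamma_W(M)$. Set $N=\Gamma_W(M)+Rx$. The module $N/\Gamma_W(M)\cong R/\p$ is $W$-torsionfree, so by Lemma~\ref{lem ex seq torsion} (applied to $0\to\Gamma_W(M)\to N\to R/\p$) we get $\Gamma_W(N)=\Gamma_W(M)$; in fact we only need the containment $\Gamma_W(N)\subseteq\Gamma_W(M)$, which is clear. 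Now localize at $\p$. Every $\q\in\Supp\Gamma_W(M)$ lies in $W$, while no prime contained in $\p$ lies in $W$, because $W$ is specialization-closed and $\p\notin W$. Therefore $(\Gamma_W(M))_\p=0$, and so $N_\p\cong(R/\p)_\p=\kappa(\p)\neq0$. Hence $\p R_\p\in\Ass_{R_\p}N_\p$. Since $\p$ is finitely generated ($R$ is noetherian), associated primes are compatible with localization, so $\p\in\Ass N\subseteq\Ass M$. This gives $\Ass(M/\Gamma_W(M))\subseteq\Ass M\setminus W$ and completes the proof.
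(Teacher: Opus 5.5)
Your proof is correct, but its key step takes a different route from the paper's.

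The paper first notes that the short exact sequence $0\to\Gamma_W(M)\to M\to M/\Gamma_W(M)\to0$ and Remark~\ref{rem gamma} reduce both equalities to the single inclusion $\Ass(M/\Gamma_W(M))\subseteq\Ass M$. So your separate element argument for $\Ass M\cap W\subseteq\Ass\Gamma_W(M)$ is fine but not needed. The paper then proves that inclusion by contradiction. If $\p\in\Ass(M/\Gamma_W(M))\setminus\Ass M$, the preimage $X\subseteq M$ of a copy of $R/\p$ (your $N$) satisfies $\Ass X\subseteq\Ass\Gamma_W(M)\cup\{\p\}$ and $\p\notin\Ass X$. Hence $\Ass X\subseteq W$, so $X$ is $W$-torsion, so $X=\Gamma_W(M)$ by Lemma~\ref{lem ex seq torsion}, which forces $R/\p=0$.

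You argue directly instead. Since $\p\notin W\supseteq\Supp\Gamma_W(M)$, localizing at $\p$ gives $N_\p\cong\kappa(\p)$. You then descend from $\p R_\p\in\Ass N_\p$ to $\p\in\Ass N\subseteq\Ass M$.

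What your route buys: it never uses that $\Gamma_W(M)$ is the largest $W$-torsion submodule. In fact it proves the general statement $\Ass(M/L)\setminus\Supp L\subseteq\Ass M$ for any submodule $L$. The price is the compatibility of associated primes with localization for arbitrary, not necessarily finitely generated, modules; you correctly justify this by the finite generation of $\p$.

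What the paper's route buys: it stays within the tools it has just set up. It does tacitly use that $\Ass X\subseteq W$ forces $X$ to be $W$-torsion when $W$ is specialization-closed. For infinitely generated $X$ this also needs a small argument, for example by passing to cyclic submodules.

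Finally, your aside that $\Gamma_W(N)=\Gamma_W(M)$ is never used and can be dropped. Likewise, "no prime contained in $\p$ lies in $W$" is more than you need: $\p\notin W$ already gives $(\Gamma_W(M))_\p=0$.
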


\begin{proof}
The exact sequence $0\to\Gamma_W(M)\to M\to M/\Gamma_W(M)\to0$ implies that $\Ass(\Gamma_W(M))\subseteq\Ass M\subseteq\Ass(\Gamma_W(M))\cup\Ass(M/\Gamma_W(M))$. Hence, by Remark \ref{rem gamma}, it is enough to show that $\Ass(M/\Gamma_W(M))\subseteq\Ass M$. Suppose that there exists a prime ideal $\p\in\Ass(M/\Gamma_W(M))\setminus\Ass M$. Then there is an injective homomorphism $R/\p\to M/\Gamma_W(M)$. Consider the following pullback diagram.
\[\xymatrix{&&0\ar[d]&0\ar[d]&\\
0\ar[r]&\Gamma_W(M)\ar[r]\ar@{=}[d]&X\ar[r]\ar[d]&R/\p\ar[r]\ar[d]&0\\
0\ar[r]&\Gamma_W(M)\ar[r]&M\ar[r]&M/\Gamma_W(M)\ar[r]&0}\]

We see that $\Ass(\Gamma_W(M))\subseteq\Ass X\subseteq\Ass(\Gamma_W(M))\cup\Ass(R/\p)=\Ass(\Gamma_W(M))\cup\{\p\}$. On the other hand, we have $\p\notin\Ass(X)$ since $\Ass X\subseteq\Ass M$. Hence, we obtain $\Ass X=\Ass(\Gamma_W(M))\subseteq W$, so that $X$ is $W$-torsion. By Lemma \ref{lem ex seq torsion}, the homomorphism $\Gamma_W(M)\to X$ is an isomorphism and therefore $R/\p=0$. This is a contradiction. We get $\Ass(M/\Gamma_W(M))\subseteq\Ass M$ and the assertion follows.
\end{proof}

Next, we define the following correspondence between subcategories of $\mod R$ and subsets of $\Spec R$.

\begin{dfn}
Let $\X$ be a subcategory of $\mod R$ and $W$ a subset of $\Spec R$. We set
\begin{enumerate}[\rm(1)]
\item 
$\Supp\X\coloneq\bigcup_{X\in\X}\Supp X$,
\item 
$\Ass\X\coloneq\bigcup_{X\in\X}\Ass X$,
\item 
$\Supp^{-1}W\coloneq\{M\in\mod R\mid\Supp M\subseteq W\}$, and
\item 
$\Ass^{-1}W\coloneq\{M\in\mod R\mid\Ass M\subseteq W\}$.
\end{enumerate}
\end{dfn}

\begin{rem}\label{rem T}
According to \cite[Theorem 4.1]{T}, there is the following commutative diagram, where the maps in each row are mutually inverse bijections.
\[\xymatrix{{\left\{\begin{array}{c}
\text{Subcategories of $\mod R$ closed under}\\
\text{subobjects and extensions}
\end{array}\right\}}\ar@<0.5ex>[r]^-\Ass&2^{\Spec R}\ar@<0.5ex>[l]^-{\Ass^{-1}}\\
\{\text{Serre subcategories of $\mod R$}\}\ar[u]_-\inc\ar@<0.5ex>[r]^-\Supp&\spcl(\Spec R)\ar[u]_-\inc\ar@<0.5ex>[l]^-{\Supp^{-1}}}\]
Here, $\inc$ means an inclusion map, and we denote the power set of a set $X$ by $2^X$.
\end{rem}

Using the above notation, the orthogonal subcategory of a subcategory of $\mod R$ is described as follows. For a subset $W$ of $\Spec R$, we write $W^\complement =\Spec R\setminus W$.

\begin{lem}\label{formula perp modR}
Let $\D$ be a subcategory of $\mod R$.
\begin{enumerate}[\rm(1)]
\item
The equality $\D^\perp=\Ass^{-1}((\Supp\D)^\complement )$ holds.
\item 
The equality $^\perp\D=\Supp^{-1}((\Ass\D)^\complement )$ holds.
\end{enumerate}
\end{lem}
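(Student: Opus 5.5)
The plan is to prove both equalities by reducing to a single module in $\D$ and to the standard facts linking $\Hom$, support and associated primes. Since $\D^\perp=\bigcap_{D\in\D}\{D\}^\perp$, $\Supp\D=\bigcup_{D\in\D}\Supp D$, and $\Ass^{-1}$ turns unions of complements into intersections, it suffices to show for a single module $D$ that $\Hom_R(D,M)=0$ if and only if $\Ass M\cap\Supp D=\emptyset$. The same reduction handles (2): it suffices that $\Hom_R(M,D)=0$ if and only if $\Supp M\cap\Ass D=\emptyset$. In fact (2) for $D$ is just the single-module statement (1) with the roles of the two modules swapped, so one lemma covers both.

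\textbf{Key claim.} For finitely generated $A,B$ we have $\Hom_R(A,B)=0$ if and only if $\Supp A\cap\Ass B=\emptyset$. I will prove this via the classical identity $\Ass\Hom_R(A,B)=\Supp A\cap\Ass B$. That identity is standard for finitely generated $A$, so the statement follows at once, since a module is zero iff it has no associated primes. To keep things self-contained I would argue both directions directly.

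\emph{If $\p\in\Supp A\cap\Ass B$, then $\Hom_R(A,B)\neq0$.} Localize at $\p$: $A_\p\neq0$, and $\p R_\p\in\Ass B_\p$, so $k(\p)\hookrightarrow B_\p$. By Nakayama, $A_\p$ surjects onto $A_\p/\p A_\p$, which is a nonzero $k(\p)$-vector space, and hence onto $k(\p)$. This gives a nonzero map $A_\p\to B_\p$. Since $A$ is finitely presented, $\Hom_R(A,B)_\p\cong\Hom_{R_\p}(A_\p,B_\p)\neq0$.

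\emph{If $\Hom_R(A,B)\neq0$, then $\Supp A\cap\Ass B\neq\emptyset$.} Take a nonzero $f\colon A\to B$. Its image $f(A)$ is a nonzero submodule of $B$, so some $\p\in\Ass f(A)\subseteq\Ass B$. Also $\p\in\Supp f(A)\subseteq\Supp A$, because $f(A)$ is a quotient of $A$.

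Applying the key claim with $(A,B)=(D,M)$ gives (1), and with $(A,B)=(M,D)$ gives (2), after taking intersections over $D\in\D$. The main obstacle is the first direction of the key claim, namely producing a nonzero global map from local data. This is exactly where finite presentation of $A$ is needed, so that $\Hom$ commutes with localization. Everything else is bookkeeping with unions and complements.
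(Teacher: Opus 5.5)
Your proof is correct and follows the paper's argument: both reduce membership in $\D^\perp$ (resp.\ ${}^\perp\D$) to the single-module criterion $\Hom_R(A,B)=0\iff\Supp A\cap\Ass B=\emptyset$, and then pass to $\Supp\D$ (resp.\ $\Ass\D$) by taking the union over $D\in\D$. The paper cites this criterion as a standard fact without proof. You also verify it directly, using localization and Nakayama for one direction and an associated prime of the image for the other, and that verification is sound.
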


\begin{proof}
The proof of (2) is similar to that of (1). Therefore, we only prove (1).

Let $M$ be an $R$-module. Then the following equivalences hold:
\[\begin{array}{ccl}
M\in\D^\perp&\iff&\text{$\Hom_R(D,M)=0$ for all $D\in\D$}\\
&\iff&\text{$\Supp D\cap\Ass M=\emptyset$ for all $D\in\D$}\\
&\iff&\Supp\D\cap\Ass M=\emptyset\\
&\iff&M\in\Ass^{-1}((\Supp\D)^\complement ).
\end{array}\]
Therefore, we have $\D^\perp=\Ass^{-1}((\Supp\D)^\complement )$.
\end{proof}

In the rest of this section, let $\Delta$ be a subset of $\Spec R$. One of our aims is to prove Theorem \ref{1}. For a subcategory $\D$ of $\mod R$ and a subcategory $\X$ of $\Ass^{-1}\Delta$, we simply write $\D^\perp_\Delta$ (resp. $^\perp_\Delta\D$, $f_\Delta$, $f^\X_\Delta$) for $\D^\perp_{\Ass^{-1}\Delta}$ (resp. $^\perp_{\Ass^{-1}\Delta}\D$, $f_{\Ass^{-1}\Delta}$, $f^\X_{\Ass^{-1}\Delta}$); see Definition \ref{dfn fg} for the definitions of $f_{\Ass^{-1}\Delta}$ and $f^\X_{\Ass^{-1}\Delta}$. The subcategories $\D^\perp_\Delta$ and $^\perp_\Delta(\D^\perp_\Delta)$ are described as follows.

\begin{lem}\label{formula perp delta}
Let $\D$ be a subcategory of $\mod R$.
\begin{enumerate}[\rm(1)]
\item 
The equality $\D^\perp_\Delta=\Ass^{-1}(\Delta\setminus\Supp\D)$ holds.
\item 
The equality $^\perp_\Delta(\D^\perp_\Delta)=\Ass^{-1}(\Delta\cap\Supp\D)$ holds.
\end{enumerate}
In particular, for a pair $(\T,\F)$ of subcategories of $\Ass^{-1}\Delta$, one has $f_\Delta(\T,\F)=(\Ass^{-1}(\Delta\cap\Supp\T),\Ass^{-1}(\Delta\setminus\Supp\T))$.
\end{lem}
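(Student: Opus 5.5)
For (1), I would intersect the description of $\D^\perp$ from Lemma \ref{formula perp modR}(1) with $\Ass^{-1}\Delta$. That lemma gives
$$\D^\perp_\Delta=\Ass^{-1}((\Supp\D)^\complement)\cap\Ass^{-1}\Delta.$$
A module lies in both subcategories exactly when its associated primes lie in $(\Supp\D)^\complement\cap\Delta=\Delta\setminus\Supp\D$. Hence $\D^\perp_\Delta=\Ass^{-1}(\Delta\setminus\Supp\D)$, which is (1).

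For (2), set $W=\Delta\setminus\Supp\D$, so that $\D^\perp_\Delta=\Ass^{-1}W$ by (1). By Lemma \ref{formula perp modR}(2),
$${}^\perp(\Ass^{-1}W)=\Supp^{-1}((\Ass(\Ass^{-1}W))^\complement).$$
I would first check that $\Ass(\Ass^{-1}W)=W$. The inclusion $\subseteq$ is by definition. For $\supseteq$, take $\p\in W$; then $R/\p$ has $\Ass(R/\p)=\{\p\}$, so $R/\p\in\Ass^{-1}W$ and $\p\in\Ass(\Ass^{-1}W)$. Intersecting with $\Ass^{-1}\Delta$, the subcategory ${}^\perp_\Delta(\D^\perp_\Delta)$ therefore consists of the modules $M$ satisfying $\Ass M\subseteq\Delta$ and $\Supp M\cap(\Delta\setminus\Supp\D)=\emptyset$. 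It remains to show that these are exactly the modules with $\Ass M\subseteq\Delta\cap\Supp\D$.

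\emph{Forward direction.} Since $\Ass M\subseteq\Supp M$, the two conditions give $\Ass M\subseteq\Delta$ and $\Ass M\cap(\Delta\setminus\Supp\D)=\emptyset$. Together these say $\Ass M\subseteq\Delta\cap\Supp\D$.

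\emph{Reverse direction.} Suppose $\Ass M\subseteq\Delta\cap\Supp\D$; clearly $\Ass M\subseteq\Delta$. I need $\Supp M\cap\Delta\subseteq\Supp\D$, and in fact I would show $\Supp M\subseteq\Supp\D$. This is the only step needing a real input, and two standard facts suffice:
\begin{enumerate}[\rm(i)]
\item every prime in $\Supp M$ contains a minimal prime of $\Supp M$, and the minimal primes of $\Supp M$ are associated primes of $M$;
\item $\Supp\D$ is specialization-closed, being a union of the closed sets $\Supp D$ for $D\in\D$.
\end{enumerate}
So any $\q\in\Supp M$ contains some $\p\in\Ass M\subseteq\Supp\D$, and then $\q\in\Supp\D$. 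This proves (2).

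For the final assertion, recall that by Definition \ref{dfn fg}, $f_\Delta(\T,\F)=({}^\perp_\Delta(\T^\perp_\Delta),\T^\perp_\Delta)$, with orthogonals taken in $\mod R$ and intersected with $\Ass^{-1}\Delta$. Applying (1) and (2) with $\D=\T$ gives
$$f_\Delta(\T,\F)=(\Ass^{-1}(\Delta\cap\Supp\T),\Ass^{-1}(\Delta\setminus\Supp\T)).$$
I expect no step to be a real obstacle; the only point needing care is the reverse direction of (2), which is settled by the facts (i) and (ii).
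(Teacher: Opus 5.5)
Your proposal is correct and follows essentially the same route as the paper: (1) by intersecting Lemma \ref{formula perp modR}(1) with $\Ass^{-1}\Delta$, and (2) via $\Ass(\Ass^{-1}W)=W$ together with Lemma \ref{formula perp modR}(2). Two small differences: you check $\Ass(\Ass^{-1}W)=W$ directly using $R/\p$, where the paper cites Remark \ref{rem T}. You also spell out the last equality $\Supp^{-1}(\Delta^\complement\cup\Supp\D)\cap\Ass^{-1}\Delta=\Ass^{-1}(\Delta\cap\Supp\D)$, which the paper leaves implicit; it rests on minimal primes of $\Supp M$ being associated and $\Supp\D$ being specialization-closed.
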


\begin{proof}
(1) This is clear from Lemma \ref{formula perp modR} (1).

(2) Let $W=\Supp\D$. It follows from (1) and Remark \ref{rem T} that $\Ass(\D^\perp_\Delta)=\Ass(\Ass^{-1}(\Delta\setminus W))=\Delta\setminus W$. Therefore, by Lemma \ref{formula perp modR} (2), we have $^\perp_\Delta(\D^\perp_\Delta)=\Supp^{-1}((\Delta\setminus W)^\complement )\cap\Ass^{-1}\Delta=\Supp^{-1}(\Delta^\complement \cup W)\cap\Ass^{-1}\Delta=\Ass^{-1}(\Delta\cap W)$.
\end{proof}

We are now ready to give a classification of torsion pairs in $\Ass^{-1}\Delta$. The statement is as follows. We note that a subset $Z$ is specialization-closed in $\Delta$ if and only if there exists a specialization-closed subset $W$ of $\Spec R$ such that $Z=\Delta\cap W$.

\begin{thm}\label{thm tp}
Every weak torsion pair in $\Ass^{-1}\Delta$ is a torsion pair, and there exists a one-to-one correspondence
\[\xymatrix{\spcl\Delta\ar@<0.5ex>[r]^-{\Phi_\Delta}&\tp(\Ass^{-1}\Delta)\ar@<0.5ex>[l]^-{\Psi_\Delta}}.\]
Here, the mutually inverse bijections $\Phi_\Delta,\Psi_\Delta$ are defined by $\Phi_\Delta(Z)=(\Ass^{-1}Z,\Ass^{-1}(\Delta\setminus Z))$ and $\Psi_\Delta(\T,\F)=\Delta\cap\Supp\T$.
\end{thm}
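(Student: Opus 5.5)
Fix $\X=\Ass^{-1}\Delta$; it is closed under extensions in $\mod R$. We show three things: every weak torsion pair in $\X$ has the form $\Phi_\Delta(Z)$; each $\Phi_\Delta(Z)$ is a torsion pair; and $\Psi_\Delta\Phi_\Delta=\id$. Since torsion pairs are weak torsion pairs (Section 3), this gives both the equality $\wtp=\tp$ and the bijection.

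\textbf{Step 1 (weak torsion pairs have the form $\Phi_\Delta(Z)$).} Let $(\T,\F)$ be a weak torsion pair in $\X$. By Remark \ref{rem fg}, $(\T,\F)=f_\X(\T,\F)$, and this is computed by Lemma \ref{formula perp delta} (with the orthogonals taken inside $\X$, which is exactly the situation of that lemma). We get
$$(\T,\F)=(\Ass^{-1}(\Delta\cap\Supp\T),\Ass^{-1}(\Delta\setminus\Supp\T)).$$
Set $Z=\Delta\cap\Supp\T=\Psi_\Delta(\T,\F)$. Since $\Supp\T$ is specialization-closed in $\Spec R$, $Z$ is specialization-closed in $\Delta$. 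Moreover $\Delta\setminus\Supp\T=\Delta\setminus Z$, so $(\T,\F)=\Phi_\Delta(Z)$. This shows $\Phi_\Delta\Psi_\Delta=\id$ on weak torsion pairs, and in particular that $\Psi_\Delta$ is well defined.

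\textbf{Step 2 ($\Phi_\Delta(Z)$ is a torsion pair).} Let $Z\in\spcl\Delta$ and write $Z=\Delta\cap W$ with $W$ specialization-closed in $\Spec R$; we may take $W$ to be the specialization closure of $Z$. For $T\in\Ass^{-1}Z$ and $F\in\Ass^{-1}(\Delta\setminus Z)$ we need $\Hom(T,F)=0$. Since $\Ass F\cap\Supp T\subseteq \Delta\cap\Supp T$, it suffices to check $\Delta\cap\Supp T\subseteq Z$. Every prime in $\Supp T$ contains some prime of $\Ass T\subseteq Z\subseteq W$, so it lies in $W$. Hence $\Delta\cap\Supp T\subseteq\Delta\cap W=Z$, and so $\Ass F\cap\Supp T=\emptyset$, giving $\Hom(T,F)=0$.

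For the canonical sequence, take $M\in\X$ and use $0\to\Gamma_W(M)\to M\to M/\Gamma_W(M)\to0$. By Lemma \ref{formula gamma ass}, $\Ass\Gamma_W(M)=\Ass M\cap W\subseteq Z$ and $\Ass(M/\Gamma_W(M))=\Ass M\setminus W\subseteq\Delta\setminus Z$. Both modules are finitely generated, so this is the required sequence. The main care point is exactly here: one must choose $W$ as a specialization-closed set of $\Spec R$ with $\Delta\cap W=Z$, so that $\Gamma_W$ is defined and the two associated-prime computations land precisely in $Z$ and in $\Delta\setminus Z$.

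\textbf{Step 3 ($\Psi_\Delta\Phi_\Delta=\id$).} We must show $\Delta\cap\Supp(\Ass^{-1}Z)=Z$. The inclusion $\subseteq$ was shown in Step 2. For $\supseteq$: if $\p\in Z$, then $R/\p\in\Ass^{-1}Z$ and $\p\in\Supp R/\p$. Hence $\Phi_\Delta$ and $\Psi_\Delta$ are mutually inverse, and every weak torsion pair, being of the form $\Phi_\Delta(Z)$ by Step 1, is a torsion pair by Step 2.
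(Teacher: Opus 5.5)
Your proposal is correct and is essentially the paper's proof. The paper's steps (a), (b), (c) are your Steps 2, 3, 1 respectively: the same use of Remark~\ref{rem fg} together with Lemma~\ref{formula perp delta} to write any weak torsion pair as $\Phi_\Delta(\Delta\cap\Supp\T)$, the same canonical sequence $0\to\Gamma_W(M)\to M\to M/\Gamma_W(M)\to0$ controlled by Lemma~\ref{formula gamma ass}, and the same $R/\mathfrak{p}$ argument for $\Psi_\Delta\Phi_\Delta=\mathrm{id}$.
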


\begin{proof}
It suffices to show the following three statements.
\begin{enumerate}[\rm(a)]
\item 
The pair $(\Ass^{-1}(\Delta\cap W),\Ass^{-1}(\Delta\setminus W))$ is a torsion pair in $\Ass^{-1}\Delta$ for all specialization-closed subsets $W$ of $\Spec R$.
\item 
$\Psi_\Delta(\Phi_\Delta(\Delta\cap W))=\Delta\cap W$ for all specialization-closed subsets $W$ of $\Spec R$.
\item 
$\Phi_\Delta(\Psi_\Delta(\T,\F))=(\T,\F)$ for all weak torsion pairs $(\T,\F)$ in $\Ass^{-1}\Delta$.
\end{enumerate}

(a) If $T\in\Ass^{-1}(\Delta\cap W)$ and $F\in\Ass^{-1}(\Delta\setminus W)$, then $\Ass(\Hom_R(T,F))=\Supp T\cap\Ass F\subseteq W\cap W^\complement =\emptyset$. Therefore, $\Hom_R(T,F)=0$. For every $R$-module $M\in\Ass^{-1}\Delta$, there is an exact sequence $0\to\Gamma_W(M)\to M\to M/\Gamma_W(M)\to0$. It follows from Lemma \ref{formula gamma ass} that $\Ass(\Gamma_W(M))=\Ass M\cap W\subseteq\Delta\cap W$ and $\Ass(M/\Gamma_W(M))=\Ass M\setminus W\subseteq\Delta\setminus W$. Hence, we have $\Gamma_W(M)\in\Ass^{-1}(\Delta\cap W)$ and $M/\Gamma_W(M)\in\Ass^{-1}(\Delta\setminus W)$. Thus, $(\Ass^{-1}(\Delta\cap W),\Ass^{-1}(\Delta\setminus W))$ is a torsion pair in $\Ass^{-1}\Delta$.

(b) Since $W$ is a specialization-closed subset of $\Spec R$, we see that $\Psi_\Delta(\Phi_\Delta(\Delta\cap W))=\Delta\cap\Supp(\Ass^{-1}(\Delta\cap W))\subseteq\Delta\cap\Supp(\Supp^{-1}W)=\Delta\cap W$. On the other hand, if $\p$ is a prime ideal in $\Delta\cap W$, then $R/\p\in\Ass^{-1}\{\p\}\subseteq\Ass^{-1}(\Delta\cap W)$. Therefore, $\p\in\Delta\cap\Supp(R/\p)\subseteq\Delta\cap\Supp(\Ass^{-1}(\Delta\cap W))=\Psi_\Delta(\Phi_\Delta(\Delta\cap W))$. Thus, $\Delta\cap W\subseteq \Psi_\Delta(\Phi_\Delta(\Delta\cap W))$.

(c) Let $W=\Supp\T$. By Remark \ref{rem fg} and Lemma \ref{formula perp delta}, we see that $(\T,\F)=f_\Delta(\T,\F)=(\Ass^{-1}(\Delta\cap W),\Ass^{-1}(\Delta\setminus W))=\Phi_\Delta(\Delta\cap W)$. Therefore, by (b), we have $\Phi_\Delta(\Psi_\Delta(\T,\F))=\Phi_\Delta(\Psi_\Delta(\Phi_\Delta(\Delta\cap W)))=\Phi_\Delta(\Delta\cap W)=(\T,\F)$.
\end{proof}

The next result determines the form of canonical sequences of torsion pairs in a subcategory of $\mod R$.

\begin{cor}\label{cor cano seq}
Let $\X$ be a subcategory of $\mod R$.
\begin{enumerate}[\rm(1)]
\item
For every weak torsion pair $(\T,\F)$ of $\X$, there exist a specialization-closed subset $W$ of $\Spec R$ such that $(\T,\F)=(\Ass^{-1}W\cap\X,\Ass^{-1}W^\complement \cap\X)$.
\item 
Let $W$ be a specialization-closed subset of $\Spec R$ and $M$ an $R$-module in $\X$. If there exists a short exact sequence $0\to T\to M\to F\to0$ such that $T\in\Ass^{-1}W\cap\X$ and $F\in\Ass^{-1}W^\complement\cap\X$, then the sequence $0\to T\to M\to F\to0$ is isomorphic to the sequence $0\to\Gamma_W(M)\to M\to M/\Gamma_W(M)\to0$. In particular, one has $\Gamma_W(M)\in\T$ and $M/\Gamma_W(M)\in\F$.
\item 
Let $W$ be a specialization-closed subset of $\Spec R$. Then the pair $(\Ass^{-1}W\cap\X,\Ass^{-1}W^\complement \cap\X)$ is a torsion pair in $\X$ if and only if one has $\Gamma_W(M),M/\Gamma_W(M)\in\X$ for all $M\in\X$.
\end{enumerate}
\end{cor}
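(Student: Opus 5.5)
For (1), I will pass from $\X$ to an ambient category where the weak torsion pairs are already classified. By Corollary \ref{cor wtp restrict}, applied with $\C=\mod R$, there is a weak torsion pair $(\T',\F')$ in $\mod R$ with $(\T',\F')\cap\X=(\T,\F)$. Theorem \ref{thm tp} with $\Delta=\Spec R$ then classifies $(\T',\F')$: it equals $(\Ass^{-1}W,\Ass^{-1}W^\complement)$ with $W=\Supp\T'$, which is specialization-closed. Intersecting with $\X$ gives the claimed form $(\T,\F)=(\Ass^{-1}W\cap\X,\Ass^{-1}W^\complement\cap\X)$.

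For (2), take the given sequence $0\to T\to M\to F\to0$. First, $T$ is $W$-torsion. Indeed, every associated prime of $T$ lies in $W$, and $W$ is specialization-closed, so $\Supp T\subseteq W$. Remark \ref{rem gamma}(1) then applies. Second, $F$ is $W$-torsionfree by Remark \ref{rem gamma}(2), since $\Ass F\cap W=\emptyset$. Lemma \ref{lem ex seq torsion} now shows that the induced map $T\to\Gamma_W(M)$ is an isomorphism, compatible with the inclusions into $M$. Passing to cokernels gives $F\cong M/\Gamma_W(M)$ compatibly with the maps from $M$, so the two sequences are isomorphic. Since subcategories are strictly full, $\Gamma_W(M)\in\T$ and $M/\Gamma_W(M)\in\F$.

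For (3), the "only if" direction follows from (2). If the pair is a torsion pair, each $M\in\X$ has a canonical sequence with terms in $\X$, and by (2) these terms are $\Gamma_W(M)$ and $M/\Gamma_W(M)$ up to isomorphism. Hence both lie in $\X$.

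For the "if" direction, there are two things to check.
\begin{enumerate}[\rm(i)]
\item Hom-vanishing: $\Hom_R(T,F)=0$ for $T\in\Ass^{-1}W\cap\X$ and $F\in\Ass^{-1}W^\complement\cap\X$. This holds because $\Ass\Hom_R(T,F)=\Supp T\cap\Ass F\subseteq W\cap W^\complement=\emptyset$, as in the proof of Theorem \ref{thm tp}(a).
\item Canonical sequences: for $M\in\X$, use $0\to\Gamma_W(M)\to M\to M/\Gamma_W(M)\to0$. Lemma \ref{formula gamma ass} gives $\Ass\Gamma_W(M)\subseteq W$ and $\Ass(M/\Gamma_W(M))\subseteq W^\complement$, and both terms lie in $\X$ by hypothesis.
\end{enumerate}

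One issue remains, and I expect it to be the main obstacle. In the introduction, torsion pairs in $\X$ are defined only when $\X$ is extension-closed, i.e.\ exact. For a general $\X$, I read "torsion pair" as the two defining conditions, with the sequence exact in $\mod R$ and all terms in $\X$. Under that reading, (i) and (ii) are exactly the two conditions, and the argument above suffices.
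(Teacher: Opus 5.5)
Your proposal is correct and follows the paper's proof: (1) comes from Corollary~\ref{cor wtp restrict} together with Theorem~\ref{thm tp} for $\Delta=\Spec R$, (2) comes from Lemma~\ref{lem ex seq torsion}, and (3) comes from (2). For the ``if'' direction of (3), your explicit check of Hom-vanishing and your use of Lemma~\ref{formula gamma ass}, as in Theorem~\ref{thm tp}(a), simply supply details that the paper leaves as ``immediate''.
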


\begin{proof}
(1) By virtue of Corollary \ref{cor wtp restrict}, there exists a weak torsion pair $(\T',\F')$ in $\mod R$ such that $(\T,\F)=(\T'\cap\X,\F'\cap\X)$. It follows from Theorem \ref{thm tp} that there exist a specialization-closed subset $W$ of $\Spec R$ such that $(\T',\F')=(\Ass^{-1}W,\Ass^{-1}W^\complement )$. Therefore, we have $(\T,\F)=(\Ass^{-1}W\cap\X,\Ass^{-1}W^\complement \cap\X)$.

(2) This is clear from Lemma \ref{lem ex seq torsion}.

(3) This follows immediately from (2).
\end{proof}

We next study torsion pairs of a particular form in $\Ass^{-1}\Delta$. Let $I$ be an ideal of $R$ and $\X$ a subcategory of $\mod R$. We identify the category $\mod R/I$ with the subcategory $\{M\in\mod R\mid IM=0\}$ of $\mod R$ and set $\CC_\X(I)\coloneq\X\cap\mod R/I$. We say that a pair $(\T,\F)$ of subcategories of $\X$ is {\em basic} if there exist ideals $I,J$ of $R$ such that $(\T,\F)=(\CC_\X(I),\CC_\X(J))$. We denote the collection of basic torsion pairs of $\X$ by $\btp(\X)$. We denote by $X_\Delta$ the subset of $\spcl\Delta$ consisting of $C\in\cl\Delta$ such that $C\sqcup C'=\Delta$ for some $C'\in\cl\Delta$. Basic torsion pairs in $\Ass^{-1}\Delta$ are characterized as follows.

\begin{thm}\label{thm btp}
The following conditions are equivalent for all ideals $I,J$ of $R$.
\begin{enumerate}[\rm(1)]
\item 
The pair $(\CC_\Delta(I),\CC_\Delta(J))$ is a torsion pair in $\Ass^{-1}\Delta$.
\item 
The equality $V(I)\cap\V(J)\cap\Delta=\emptyset$ holds, and $IJM=0$ for all objects $M\in\Ass^{-1}\Delta$.
\item 
One has $\CC_\Delta(I)=\Ass^{-1}(\Delta\cap\V(I))$, $\CC_\Delta(J)=\Ass^{-1}(\Delta\cap\V(J))$, and $(\V(I)\cap\Delta)\sqcup(\V(J)\cap\Delta)=\Delta$.
\end{enumerate}
In particular, the restriction of $\Psi_\Delta$ to $\btp(\Ass^{-1}\Delta)$ gives an injective map
$$\btp(\Ass^{-1}\Delta)\longrightarrow X_\Delta.$$
\end{thm}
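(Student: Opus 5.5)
We prove (1)$\Rightarrow$(3)$\Rightarrow$(2)$\Rightarrow$(1). Throughout, $\CC_\Delta(I)=\Ass^{-1}\Delta\cap\mod R/I$, and a module $M$ is killed by $I$ only if $\Supp M\subseteq\V(I)$. So $\CC_\Delta(I)\subseteq\Supp^{-1}\V(I)\cap\Ass^{-1}\Delta=\Ass^{-1}(\Delta\cap\V(I))$, where the last equality holds because $\V(I)$ is specialization-closed (associated primes lie in the support, and supports are generated by associated primes). The reverse inclusion can fail, since modules supported in $\V(I)$ need only be killed by a power of $I$.

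(1)$\Rightarrow$(3). By Theorem \ref{thm tp}, $(\CC_\Delta(I),\CC_\Delta(J))=(\Ass^{-1}Z,\Ass^{-1}(\Delta\setminus Z))$ with $Z=\Delta\cap\Supp\CC_\Delta(I)$.
\begin{enumerate}[\rm(a)]
\item Since $\Supp\CC_\Delta(I)\subseteq\V(I)$, we have $Z\subseteq\Delta\cap\V(I)$.
\item For $\p\in\Delta\cap\V(I)$, the module $R/\p$ has $\Ass=\{\p\}\subseteq\Delta$ and is killed by $I$. Hence $R/\p\in\CC_\Delta(I)$, so $\p\in Z$.
\end{enumerate}
Thus $Z=\Delta\cap\V(I)$ and $\CC_\Delta(I)=\Ass^{-1}(\Delta\cap\V(I))$. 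The same $R/\p$ argument gives $\Delta\setminus Z=\Delta\cap\Supp\CC_\Delta(J)$, and by the inclusions above this equals $\Delta\cap\V(J)$. Therefore $\CC_\Delta(J)=\Ass^{-1}(\Delta\cap\V(J))$ and $\Delta=(\Delta\cap\V(I))\sqcup(\Delta\cap\V(J))$.

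(3)$\Rightarrow$(2). Disjointness is immediate. Let $M\in\Ass^{-1}\Delta$. By Theorem \ref{thm tp} (a) applied with $W=\V(I)$, there is a canonical sequence $0\to\Gamma_{\V(I)}M\to M\to M/\Gamma_{\V(I)}M\to0$. Its outer terms lie in $\Ass^{-1}(\Delta\cap\V(I))$ and $\Ass^{-1}(\Delta\setminus\V(I))=\Ass^{-1}(\Delta\cap\V(J))$. By (3) these are $\CC_\Delta(I)$ and $\CC_\Delta(J)$, so the quotient is killed by $J$ and the submodule by $I$. Hence $JM\subseteq\Gamma_{\V(I)}M$, and so $IJM=0$.

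(2)$\Rightarrow$(1). First, $\Hom(\CC_\Delta(I),\CC_\Delta(J))=0$: for $T$ in the first and $F$ in the second, $\Supp T\cap\Ass F\subseteq\V(I)\cap\V(J)\cap\Delta=\emptyset$, which gives the vanishing exactly as in Theorem \ref{thm tp} (a). For the canonical sequence of $M\in\Ass^{-1}\Delta$, take $0\to JM\to M\to M/JM\to0$.
\begin{enumerate}[\rm(a)]
\item $M/JM$ is killed by $J$, and $JM$ is killed by $I$ because $IJM=0$.
\item $\Ass(JM)\subseteq\Ass M\subseteq\Delta$, so $JM\in\CC_\Delta(I)$.
\item The only delicate point is $\Ass(M/JM)\subseteq\Delta$. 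Each $\p\in\Ass(M/JM)$ lies in $\V(J)$, so $\p\notin\V(I)$ by disjointness. Localizing at $\p$ gives $I_\p=R_\p$, so $(JM)_\p=(IJM)_\p=0$. Hence $(M/JM)_\p=M_\p$, and $\p$ is associated to $M$, so $\p\in\Delta$.
\end{enumerate}
Thus $M/JM\in\CC_\Delta(J)$, and the pair is a torsion pair in $\Ass^{-1}\Delta$.

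Finally, for a basic torsion pair, $\Psi_\Delta$ sends $(\CC_\Delta(I),\CC_\Delta(J))$ to $Z=\Delta\cap\V(I)$ by the computation in (1)$\Rightarrow$(3). This $Z$ is closed in $\Delta$ with closed complement $\Delta\cap\V(J)$, so $Z\in X_\Delta$. Injectivity of the restricted map follows from injectivity of $\Psi_\Delta$ in Theorem \ref{thm tp}. The main obstacle I expect is (c) above, controlling $\Ass(M/JM)$; the localization argument should handle it.
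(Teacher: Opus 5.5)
Your (1)$\Rightarrow$(3), your (3)$\Rightarrow$(2) and your final injectivity argument are fine. Going directly from (1) to (3), via $\Phi_\Delta\Psi_\Delta=\mathrm{id}$ and testing membership of $R/\p$, is a legitimate shortcut compared to the paper's route through (2). The gap is step (c) of (2)$\Rightarrow$(1).

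Hypothesis (2) only says $\V(I)\cap\V(J)\cap\Delta=\emptyset$. So from $\p\in\V(J)$ you may conclude $\p\notin\V(I)$ only if you already know $\p\in\Delta$, and that is exactly what (c) is supposed to prove. The argument is circular, and the claim is in fact false. Let $R=k[x,y]/(xy)$ with $k$ a field, $\Delta=\{(x),(y)\}=\Ass R$, $I=(x)$ and $J=(y^2)$.
\begin{enumerate}[\rm(i)]
\item We have $IJ=0$, and $\V(I)\cap\V(J)=\{(x,y)\}$ misses $\Delta$, so (2) holds.
\item $M=R$ lies in $\Ass^{-1}\Delta$.
\item But $M/JM=k[x,y]/(xy,y^2)$ contains the nonzero class of $y$, which is annihilated by $(x,y)$. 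Thus $(x,y)\in\Ass(M/JM)\setminus\Delta$, and $0\to JM\to M\to M/JM\to0$ is not even a sequence in $\Ass^{-1}\Delta$.
\end{enumerate}
The true canonical sequence here is $0\to(y)\to R\to R/(y)\to0$, and $JM=(y^2)\subsetneq(y)$. So $JM$ is the wrong submodule: in general it is strictly smaller than the torsion part.

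A minimal repair of your route is to use $(0:_MI)$ in place of $JM$. It is killed by $I$, and it contains $JM$ because $IJM=0$, so $J$ kills $M/(0:_MI)$. Writing $I=(a_1,\dots,a_n)$, the map $m\mapsto(a_1m,\dots,a_nm)$ embeds $M/(0:_MI)$ into $M^{\oplus n}$. Hence $\Ass(M/(0:_MI))\subseteq\Ass M\subseteq\Delta$, and your Hom-vanishing argument then finishes (1).

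The paper instead avoids building the sequence by hand and proves (2)$\Rightarrow$(3). If $\Ass M\subseteq\Delta\cap\V(I)$, then $\Ass M\cap\V(J)=\emptyset$, so some $x\in J$ is $M$-regular, and $xIM\subseteq IJM=0$ forces $IM=0$. Together with $\Delta\subseteq\V(IJ)=\V(I)\cup\V(J)$ this gives (3). Then (3)$\Rightarrow$(1) is Theorem~\ref{thm tp}, with canonical sequence $0\to\Gamma_{\V(I)}(M)\to M\to M/\Gamma_{\V(I)}(M)\to0$.
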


\begin{proof}
(1)$\Rightarrow$(2):
We first show that $V(I)\cap\V(J)\cap\Delta=\emptyset$. If $\p\in\V(I)\cap\V(J)\cap\Delta$, then it follows from Lemma \ref{lem intersection} that $R/\p\in\CC_\Delta(I)\cap\CC_\Delta(J)=0$. This is a contradiction. Thus, we have $V(I)\cap\V(J)\cap\Delta=\emptyset$.

We next show that $IJM=0$ for all $M\in\Ass^{-1}\Delta$. If $M\in\Ass^{-1}\Delta$, then there exists an exact sequence $0\to T\to M\to F\to0$ with $T\in\CC_\Delta(I)$ and $F\in\CC_\Delta(J)$. As $IT=JF=0$, we obtain $IJM=0$.

(2)$\Rightarrow$(3):
We first prove $\CC_\Delta(I)=\Ass^{-1}(\Delta\cap\V(I))$. Let $M$ be an $R$-module in $\CC_\Delta(I)$. Then we see that $\Ass M\subseteq\Delta$ and $\Supp M\subseteq\V(I)$. Therefore, we have $M\in\Ass^{-1}(\Delta\cap\V(I))$, which proves the inclusion $\CC_\Delta(I)\subseteq\Ass^{-1}(\Delta\cap\V(I))$. Let $M$ be an $R$-module in $\Ass^{-1}(\Delta\cap\V(I))$. Then we have $\Ass M\cap\V(J)\subseteq\V(I)\cap\V(J)\cap\Delta=\emptyset$. Thus, we obtain $\Gamma_J(M)=0$, and therefore $\grade(J,M)>0$. Let $x\in J$ be a regular element of $M$. Since $xIM\subseteq IJM=0$, we see that $IM=0$. Thus, we have $M\in\CC_\Delta(I)$ and the inclusion $\Ass^{-1}(\Delta\cap\V(I))\subseteq\CC_\Delta(I)$ follows. Hence, we conclude that $\CC_\Delta(I)=\Ass^{-1}(\Delta\cap\V(I))$. Similarly, the equality $\CC_\Delta(J)=\Ass^{-1}(\Delta\cap\V(J))$ holds.

For the equality $(\V(I)\cap\Delta)\sqcup(\V(J)\cap\Delta)=\Delta$, it suffices to show that $\Delta\subseteq\V(I)\cup\V(J)$. For every prime ideal $\p\in\Delta$, we have $IJ(R/\p)=0$ since $R/\p\in\Ass^{-1}\Delta$. It follows that $\p\in\V(IJ)=\V(I)\cup\V(J)$. Therefore, we have $\Delta\subseteq\V(I)\cup\V(J)$.

(3)$\Rightarrow$(1):
This implication is evident from Theorem \ref{thm tp}.
\end{proof}

The above theorem shows that for a basic torsion pair $(\T,\F)$ in $\Ass^{-1}\Delta$, the pair $(\F,\T)$ is also a basic torsion pair in $\Ass^{-1}\Delta$. Therefore, Proposition \ref{prop cano seq split} yields the following corollary since $\Ass^{-1}\Delta$ is a Serre subcategory of $\mod R$ if $\Delta$ is specialization-closed subset of $\Spec R$.

\begin{cor}\label{cor Delta spcl}
Suppose that $\Delta$ is a specialization-closed subset of $\Spec R$. Then, for every basic torsion pair $(\T,\F)$, all canonical sequences with respect to $(\T,\F)$ split. In particular, all indecomposable $R$-modules in $\Ass^{-1}\Delta$ belong to either $\T$ or $\F$.
\end{cor}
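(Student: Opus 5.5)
The plan is to reduce everything to Proposition \ref{prop cano seq split}, applied with $\E=\Ass^{-1}\Delta$. Two things are needed for that: $\Ass^{-1}\Delta$ must be an abelian category, and both $(\T,\F)$ and $(\F,\T)$ must be torsion pairs in it.

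First, the abelian structure. Since $\Delta$ is specialization-closed, Remark \ref{rem T} gives $\Ass^{-1}\Delta=\Supp^{-1}\Delta$, and this is a Serre subcategory of $\mod R$. A Serre subcategory is closed under submodules, quotients and extensions. Hence kernels and cokernels computed in $\mod R$ stay inside it, so $\Ass^{-1}\Delta$ is an abelian (exact) subcategory. Its conflations are exactly the short exact sequences of $\mod R$ with all terms in $\Ass^{-1}\Delta$.

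Second, the symmetry. Let $(\T,\F)=(\CC_\Delta(I),\CC_\Delta(J))$ be a basic torsion pair. Condition (2) of Theorem \ref{thm btp} is symmetric in $I$ and $J$: it asks that $\V(I)\cap\V(J)\cap\Delta=\emptyset$ and that $IJM=JIM=0$ for all $M\in\Ass^{-1}\Delta$. Therefore the equivalence (2)$\Leftrightarrow$(1), applied with $I$ and $J$ interchanged, shows that $(\CC_\Delta(J),\CC_\Delta(I))=(\F,\T)$ is also a torsion pair in $\Ass^{-1}\Delta$.

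Now Proposition \ref{prop cano seq split} applies directly. It says that every canonical sequence with respect to $(\T,\F)$ splits, and that every indecomposable object of $\Ass^{-1}\Delta$ lies in $\T$ or in $\F$. The indecomposable objects of $\Ass^{-1}\Delta$ are exactly the indecomposable $R$-modules in $\Ass^{-1}\Delta$, because $\Ass^{-1}\Delta$ is closed under direct summands. The argument is essentially bookkeeping. The only point to check with care is that Proposition \ref{prop cano seq split} legitimately applies to the subcategory $\Ass^{-1}\Delta$ rather than to all of $\mod R$, and this is exactly where the Serre property of $\Ass^{-1}\Delta$ is used.
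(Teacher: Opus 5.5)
Your proposal is correct and follows the paper's argument. The symmetry in $I,J$ of condition (2) of Theorem~\ref{thm btp} makes $(\F,\T)$ a (basic) torsion pair as well, and $\Ass^{-1}\Delta=\Supp^{-1}\Delta$ is Serre, hence abelian, when $\Delta$ is specialization-closed, so Proposition~\ref{prop cano seq split} applies; your remarks on conflations and on indecomposability in the summand-closed subcategory only spell out what the paper leaves implicit.
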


The following theorem shows that the map in Theorem \ref{thm btp} is bijective if $\Delta$ contains $\Min R$.

\begin{thm}\label{thm btp min}
Suppose that $\Delta$ contains $\Min R$. Then the following conditions are equivalent for all pairs $(\T,\F)$ of subcategories $\Ass^{-1}\Delta$.
\begin{enumerate}[\rm(1)]
\item 
The pair $(\T,\F)$ is a basic torsion pair in $\Ass^{-1}\Delta$.
\item 
There exist ideals $I,J$ of $R$ such that $(\T,\F)=(\CC_\Delta(I),\CC_\Delta(J))$,  $\V(I)\cap\V(J)\cap\Delta=\emptyset$, and $IJ=0$.
\item 
There exists an element $C$ of $X_\Delta$ such that $(\T,\F)=\Phi_\Delta(C)$.
\end{enumerate}
In particular, there exists a one-to-one correspondence
\[\xymatrix{X_\Delta\ar@<0.5ex>[r]&\btp(\Ass^{-1}\Delta)\ar@<0.5ex>[l]}\]
which is the restriction of the correspondence in Theorem \ref{thm tp}.
\end{thm}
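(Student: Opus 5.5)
Throughout, assume $\Min R\subseteq\Delta$, and prove the cycle (1)$\Rightarrow$(3)$\Rightarrow$(2)$\Rightarrow$(1).

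\emph{(1)$\Rightarrow$(3).} If $(\T,\F)=(\CC_\Delta(I),\CC_\Delta(J))$ is a torsion pair, Theorem \ref{thm btp} gives $\T=\Ass^{-1}(\Delta\cap\V(I))$, $\F=\Ass^{-1}(\Delta\cap\V(J))$ and $(\Delta\cap\V(I))\sqcup(\Delta\cap\V(J))=\Delta$. Then $C=\Delta\cap\V(I)$ is closed in $\Delta$, with closed complement $\Delta\cap\V(J)$, so $C\in X_\Delta$. Moreover $\Phi_\Delta(C)=(\Ass^{-1}C,\Ass^{-1}(\Delta\setminus C))=(\T,\F)$.

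\emph{(3)$\Rightarrow$(2).} This is the main step.
\begin{itemize}
\item Let $C\in X_\Delta$ with complement $C'$. Choose ideals $I_0,J_0$ with $C=\Delta\cap\V(I_0)$ and $C'=\Delta\cap\V(J_0)$. Then $\V(I_0)\cap\V(J_0)\cap\Delta=\emptyset$ and $\Delta\subseteq\V(I_0J_0)$.
\item Since $\Min R\subseteq\Delta$, every minimal prime contains $I_0J_0$. Hence $I_0J_0$ is contained in the nilradical, so $(I_0J_0)^n=0$ for some $n$.
\item Put $I=I_0^n$ and $J=J_0^n$. Then $IJ=0$, $\V(I)=\V(I_0)$ and $\V(J)=\V(J_0)$, so $\V(I)\cap\V(J)\cap\Delta=\emptyset$.
\item Since $IJM=0$ for every $M$, condition (2) of Theorem \ref{thm btp} holds. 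Its item (3) then gives $\CC_\Delta(I)=\Ass^{-1}(\Delta\cap\V(I))=\Ass^{-1}C$ and $\CC_\Delta(J)=\Ass^{-1}C'=\Ass^{-1}(\Delta\setminus C)$.
\item Therefore $\Phi_\Delta(C)=(\CC_\Delta(I),\CC_\Delta(J))$, which is (2).
\end{itemize}
The obstacle here is producing ideals with product exactly $0$, not merely nilpotent. Raising to the $n$th power handles this: it does not change the closed sets $\V(I_0),\V(J_0)$.

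\emph{(2)$\Rightarrow$(1).} This is immediate from Theorem \ref{thm btp}, since $IJ=0$ forces $IJM=0$ for all $M$.

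\emph{The bijection.} By the equivalence (1)$\Leftrightarrow$(3), the image of $\btp(\Ass^{-1}\Delta)$ under $\Psi_\Delta$ is exactly $X_\Delta$. Combined with the injectivity from Theorem \ref{thm btp}, the bijection of Theorem \ref{thm tp} restricts to mutually inverse bijections between $X_\Delta$ and $\btp(\Ass^{-1}\Delta)$. Here $\Phi_\Delta(C)$ lies in $\btp$ by (3)$\Rightarrow$(1), and $\Psi_\Delta\Phi_\Delta(C)=C$ by Theorem \ref{thm tp}.
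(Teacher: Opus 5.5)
Your proposal is correct and follows the paper's proof. In both, (1)$\Rightarrow$(3) and (2)$\Rightarrow$(1) are read off from Theorem~\ref{thm btp}. For (3)$\Rightarrow$(2), both choose defining ideals for $C$ and its closed complement, use $\Min R\subseteq\Delta$ to see that their product is nilpotent, and pass to $n$th powers so that $IJ=0$ while the closed sets $\V(I),\V(J)$ stay unchanged.
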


\begin{proof}
The implications (2)$\Rightarrow$(1) and (1)$\Rightarrow$(3) are clear by Theorem \ref{thm btp}. We prove (3)$\Rightarrow$(2).

Assume that there exists an element $C$ of $X_\Delta$ such that $(\T,\F)=\Phi_\Delta(C)$. Then there exist ideals $K,L$ of $R$ such that $C=\V(K)\cap\Delta$ and $(\V(K)\cap\Delta)\sqcup(\V(L)\cap\Delta)=\Delta$. Since $\Min R\subseteq\Delta\subseteq\V(K)\cup\V(L)=\V(KL)$, we obtain $\V(KL)=\Spec R=V(0)$. Therefore, there exists a positive integer $n$ such that $K^nL^n=0$. Setting $I\coloneq K^n$ and $J\coloneq L^n$, we have $V(I)\cap\V(J)\cap\Delta=\V(K)\cap\V(L)\cap\Delta=\emptyset$ and $IJ=0$. It follows from Theorem \ref{thm btp} that $(\CC_\Delta(I),\CC_\Delta(J))=\Phi_\Delta(\V(I)\cap\Delta)$. Therefore, we have $(\T,\F)=\Phi_\Delta(\V(K)\cap\Delta)=\Phi_\Delta(\V(I)\cap\Delta)=(\CC_\Delta(I),\CC_\Delta(J))$. This proves (3)$\Rightarrow$(2).
\end{proof}

Combining Theorems \ref{thm tp}, \ref{thm btp}, and \ref{thm btp min}, we obtain Theorem \ref{1}.

Next, we consider weak torsion pairs in arbitrary subcategories of $\mod R$. Our next main aim is to prove Theorem \ref{3}. We establish a correspondence between $\spcl\Delta$ and $\wtp(\X)$ for a subcategory $\X$ of $\Ass^{-1}\Delta$ using the maps we defined earlier.

\begin{dfn}
Let $\X$ be a subcategory of $\Ass^{-1}\Delta$.
\begin{enumerate}[\rm(1)]
\item
We denote the map $(f^\X_\Delta|_{\wtp(\Ass^{-1}\Delta)})\circ\Phi_\Delta:\spcl\Delta\to\wtp(\X)$ by $\Phi^\X_\Delta$.
\item 
We denote the map $\Psi_\Delta\circ(f_\Delta|_{\wtp(\X)}):\wtp(\X)\to\spcl\Delta$ by $\Psi^\X_\Delta$.
\end{enumerate}
\end{dfn}

Note that if $\X=\Ass^{-1}\Delta$, then $\Phi^\X_\Delta$ (resp. $\Psi^\X_\Delta$) coincides with $\Phi_\Delta$ (resp. $\Psi_\Delta$). We provide some important properties of $\Phi^\X_\Delta$ and $\Psi^\X_\Delta$.

\begin{prop}\label{prop wtp X}
Let $\X$ be a subcategory of $\Ass^{-1}\Delta$.
\begin{enumerate}[\rm(1)]
\item 
The equality $\Phi^\X_\Delta\circ\Psi^\X_\Delta=\id_{\wtp(\X)}$ holds. In particular, $\Phi^\X_\Delta$ is surjective and $\Psi^\X_\Delta$ is injective.
\item 
Let $Z$ be specialization-closed subset of $\Delta$. Then $\Phi_\Delta(Z)\cap\X$ is a weak torsion pair in $\X$ if and only if $\Phi^\X_\Delta(Z)=\Phi_\Delta(Z)\cap\X$.
\item
Let $(\T,\F)$ be a weak torsion pair in $\X$. Then one has $\Psi^\X_\Delta(\T,\F)=\Delta\cap\Supp\T$.
\end{enumerate}
\end{prop}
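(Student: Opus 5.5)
The plan is to unwind the two composites using Proposition \ref{prop wtp subcat}, Remark \ref{rem fg}, Lemma \ref{formula perp delta} and Theorem \ref{thm tp}. Throughout, view $\X$ as a subcategory of the preadditive category $\C=\Ass^{-1}\Delta$; then $f_\Delta$ and $f^\X_\Delta$ are the maps $f_\C$ and $f^\X_\C$ of Definition \ref{dfn fg}.

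For (1), let $(\T,\F)\in\wtp(\X)$.
\begin{itemize}
\item Since $f_\Delta(\T,\F)$ is a weak torsion pair in $\Ass^{-1}\Delta$, Theorem \ref{thm tp} gives $\Phi_\Delta(\Psi_\Delta(f_\Delta(\T,\F)))=f_\Delta(\T,\F)$.
\item Hence $\Phi^\X_\Delta(\Psi^\X_\Delta(\T,\F))=f^\X_\Delta(f_\Delta(\T,\F))$.
\item By Proposition \ref{prop wtp subcat} with $\alpha=f_\C$ and $\beta=f^\X_\C$, this equals $(\T,\F)$.
\end{itemize}
Surjectivity of $\Phi^\X_\Delta$ and injectivity of $\Psi^\X_\Delta$ follow formally.

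For (2), suppose first that $\Phi_\Delta(Z)\cap\X$ is a weak torsion pair in $\X$. The second sentence of Remark \ref{rem fg} gives $f^\X_\Delta(\Phi_\Delta(Z))=\Phi_\Delta(Z)\cap\X$, which is the claimed equality $\Phi^\X_\Delta(Z)=\Phi_\Delta(Z)\cap\X$. Conversely, if that equality holds, then $\Phi_\Delta(Z)\cap\X$ equals $\Phi^\X_\Delta(Z)$. By definition $\Phi^\X_\Delta(Z)$ is $f_\X$ applied to a pair, and $f_\X$ of any pair is a weak torsion pair in $\X$ by Definition \ref{dfn fg}(2), justified via Lemma \ref{formula perp X}. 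So $\Phi_\Delta(Z)\cap\X$ is a weak torsion pair in $\X$.

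For (3), let $(\T,\F)\in\wtp(\X)$. Here $(\T,\F)$ is a pair of subcategories of $\Ass^{-1}\Delta$, so the last sentence of Lemma \ref{formula perp delta} gives
\[
f_\Delta(\T,\F)=\bigl(\Ass^{-1}(\Delta\cap\Supp\T),\Ass^{-1}(\Delta\setminus\Supp\T)\bigr).
\]
Applying $\Psi_\Delta$ gives $\Delta\cap\Supp(\Ass^{-1}(\Delta\cap\Supp\T))$. Now $\Supp\T$ is specialization-closed, being a union of supports of finitely generated modules. So step (b) of the proof of Theorem \ref{thm tp}, with $W=\Supp\T$, shows this set equals $\Delta\cap\Supp\T$.

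The only point needing care is in (2): confirming that $f^\X_\Delta$ of an arbitrary pair really is a weak torsion pair in $\X$. This follows from Lemma \ref{formula perp X} in the same way that Lemma \ref{formula perp} gives the corresponding fact for $f_\C$. Everything else is formal bookkeeping.
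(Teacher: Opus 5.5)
Your proof is correct and essentially the paper's own argument. You prove (1) via Theorem \ref{thm tp} and Proposition \ref{prop wtp subcat}, (2) via Remark \ref{rem fg}, and (3) via Lemma \ref{formula perp delta} combined with $\Psi_\Delta\circ\Phi_\Delta=\id$. The differences are minor: you spell out the converse in (2), that $\Phi^\X_\Delta(Z)$ is a weak torsion pair by construction, and you cite step (b) of the proof of Theorem \ref{thm tp} with $W=\Supp\T$ instead of its bijection statement.
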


\begin{proof}
(1) By Proposition \ref{prop wtp subcat} and Theorem \ref{thm tp}, we have $\Phi^\X_\Delta\circ\Psi^\X_\Delta=(f^\X_\Delta|_{\wtp(\Ass^{-1}\Delta)})\circ\Phi_\Delta\circ\Psi_\Delta\circ(f_\Delta|_{\wtp(\X)})=(f^\X_\Delta|_{\wtp(\Ass^{-1}\Delta)})\circ(f_\Delta|_{\wtp(\X)})=\id_{\wtp(\X)}$.

(2) This follows immediately from Remark \ref{rem fg}.

(3) Let $(\T,\F)$ be a weak torsion pair in $\X$. 
Lemma \ref{formula perp delta} and Theorem \ref{thm tp} show that $\Psi_\Delta(f_\Delta(\T,\F))=\Psi_\Delta(\Ass^{-1}(\Delta\cap\Supp\T),\Ass^{-1}(\Delta\setminus\Supp\T))=\Psi_\Delta(\Phi_\Delta(\Delta\cap\Supp\T))=\Delta\cap\Supp\T=\Psi^\X_\Delta(\T,\F)$.
\end{proof}

The above proposition shows that every subcategory of $\mod R$ which has only one associated prime ideal admits only trivial torsion pairs.

\begin{cor}\label{cor singleton}
Let $\X$ be a subcategory of $\mod R$ containing the zero object $0$ such that $\Ass\X$ is a singleton. Then one has $\wtp(\X)=\{(0,\X),(\X,0)\}$.
\end{cor}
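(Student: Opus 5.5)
Let $\Ass\X=\{\p\}$ and set $\Delta=\{\p\}$. Every $M\in\X$ has $\Ass M\subseteq\{\p\}$, so $\X$ is a subcategory of $\Ass^{-1}\Delta$ and Proposition \ref{prop wtp X} applies. By Proposition \ref{prop wtp X} (1), $\Psi^\X_\Delta:\wtp(\X)\to\spcl\Delta$ is injective. The set $\spcl\Delta$ consists only of $\emptyset$ and $\{\p\}$, so $\X$ has at most two weak torsion pairs. It therefore suffices to check that $(0,\X)$ and $(\X,0)$ are weak torsion pairs in $\X$ and that they are distinct.

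For $(0,\X)$, we have $0^\perp\cap\X=\X$. Also ${}^\perp\X\cap\X$ consists of those $M\in\X$ with $\Hom_R(M,\X)=0$; in particular $\Hom_R(M,M)=0$, so $M=0$. Here we use that $0\in\X$, so that the torsion part $\{0\}$ is a subcategory of $\X$. Hence $(0,\X)$ is a weak torsion pair, and by the symmetric argument so is $(\X,0)$.

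These two pairs are distinct as soon as $\X\neq0$, which holds because $\Ass\X\neq\emptyset$ forces $\X$ to contain a nonzero module. Combined with the bound of at most two pairs, this gives $\wtp(\X)=\{(0,\X),(\X,0)\}$.

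Alternatively, one can use Corollary \ref{cor cano seq} (1) directly. It writes any weak torsion pair as $(\Ass^{-1}W\cap\X,\Ass^{-1}W^\complement\cap\X)$ with $W$ specialization-closed. Since every nonzero $M\in\X$ has $\Ass M=\{\p\}$, this pair is $(\X,0)$ or $(0,\X)$ according as $\p\in W$ or $\p\notin W$.

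There is no real obstacle here. The only care needed is with zero objects: every nonzero module in $\X$ has $\Ass$ exactly $\{\p\}$, and the hypothesis $0\in\X$ is what makes $0$ a legitimate subcategory of $\X$.
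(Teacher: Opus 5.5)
Your proof is correct and follows essentially the same route as the paper. Both set $\Delta=\{\p\}$, observe that $\X\subseteq\Ass^{-1}\Delta$ and $\spcl\Delta=\{\emptyset,\{\p\}\}$, and use Proposition \ref{prop wtp X} (1) to bound $\wtp(\X)$ by two candidates. You are more explicit than the paper in checking directly that $(0,\X)$ and $(\X,0)$ are weak torsion pairs in $\X$, and that they are distinct because $\X\neq0$; the paper leaves this implicit when it identifies $\Phi^\X_{\{\p\}}(Z)$ with $\Phi_{\{\p\}}(Z)\cap\X$.
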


\begin{proof}
Let $\Ass\X=\{\p\}$. Then $\X$ is a subcategory of $\Ass^{-1}\{\p\}$. We see that $\spcl\{\p\}=\{\emptyset,\{\p\}\}$, $\Phi_{\{\p\}}(\emptyset)\cap\X=(0,\Ass^{-1}\{\p\})\cap\X=(0,\X)$, and $\Phi_{\{\p\}}(\{\p\})=(\Ass^{-1}\{\p\},0)\cap\X=(\X,0)$. Therefore, (1) and (3) of Proposition \ref{prop wtp X} imply that $\wtp(\X)=\{(0,\X),(\X,0)\}$.
\end{proof}

We next investigate when $\Phi^\X_{\Ass\X}$ and $\Psi^\X_{\Ass\X}$ are bijective for a subcategory $\X$ of $\mod R$. We consider the following condition.

\begin{dfn}
Let $\X$ be a subcategory of $\mod R$. We say that $\X$ satisfies the condition $(\AA)$ if for all prime ideals $\p\in\Ass\X$, there exists an $R$-module $X\in\X$ such that $\Ass X=\{\p\}$.
\end{dfn}

We close this section with the following theorem, which shows that the condition $(\AA)$ is sufficient for $\Phi^\X_{\Ass\X}$ and $\Psi^\X_{\Ass\X}$ to be bijective.

\begin{thm}\label{thm A}
Let $\X$ be a subcategory of $\mod R$ satisfying the condition $(\AA)$. Then one has $\Phi^\X_{\Ass\X}(Z)=\Phi_{\Ass\X}(Z)\cap\X$ for all specialization-closed subsets $Z$ of $\Ass\X$ and the maps $\Phi^\X_{\Ass\X},\Psi^\X_{\Ass\X}$ are mutually inverse bijections. Therefore, there exists a one-to-one correspondence
\[\xymatrix{\spcl(\Ass\X)\ar@<0.5ex>[r]^-{\Phi^\X_{\Ass\X}}&\wtp(\X)\ar@<0.5ex>[l]^-{\Psi^\X_{\Ass\X}}}.\]
\end{thm}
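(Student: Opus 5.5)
Set $\Delta=\Ass\X$, so that $\X\subseteq\Ass^{-1}\Delta$ and Proposition \ref{prop wtp X} applies. By Proposition \ref{prop wtp X}(1) we already have $\Phi^\X_\Delta\circ\Psi^\X_\Delta=\id_{\wtp(\X)}$. It therefore suffices to prove two things for every $Z\in\spcl\Delta$:
\begin{enumerate}[\rm(i)]
\item $\Phi_\Delta(Z)\cap\X$ is a weak torsion pair in $\X$;
\item $\Psi^\X_\Delta(\Phi^\X_\Delta(Z))=Z$.
\end{enumerate}
Indeed, (i) together with Proposition \ref{prop wtp X}(2) gives $\Phi^\X_\Delta(Z)=\Phi_\Delta(Z)\cap\X$. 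Statement (ii) gives $\Psi^\X_\Delta\circ\Phi^\X_\Delta=\id$, so the two maps are mutually inverse bijections.

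For (i), write $Z=\Delta\cap W$ with $W$ specialization-closed in $\Spec R$. Set $\T=\Ass^{-1}Z\cap\X$ and $\F=\Ass^{-1}(\Delta\setminus Z)\cap\X$. Since $\Phi_\Delta(Z)$ is a torsion pair in $\Ass^{-1}\Delta$ (Theorem \ref{thm tp}), we have $\Hom_R(\T,\F)=0$. This gives $\T\subseteq{}^\perp_\X\F$ and $\F\subseteq\T^\perp_\X$, so only the reverse inclusions remain.

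The reverse inclusions are where condition $(\AA)$ enters. Let $M\in\X$ with $\Hom_R(M,\F)=0$, and suppose $M\notin\T$. Then some $\p\in\Ass M$ lies outside $Z$. Since $\p\in\Ass\X=\Delta$, we get $\p\notin W$. By $(\AA)$ choose $X\in\X$ with $\Ass X=\{\p\}$; then $X\in\F$. As $\p\in\Supp M\cap\Ass X$, we have $\Hom_R(M,X)\ne0$, a contradiction. (Here I use the fact from Lemma \ref{formula perp modR} that $\Hom_R(M,N)\ne0$ iff $\Supp M\cap\Ass N\neq\emptyset$.)

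Dually, let $M\in\X$ with $\Hom_R(\T,M)=0$, and suppose $M\notin\F$. Then some $\p\in\Ass M$ lies in $Z$. Choose $X\in\X$ with $\Ass X=\{\p\}$; then $X\in\T$. Since $\p\in\Supp X\cap\Ass M$, we get $\Hom_R(X,M)\neq0$, a contradiction.

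For (ii), by Proposition \ref{prop wtp X}(3) and (i) we have $\Psi^\X_\Delta(\Phi^\X_\Delta(Z))=\Delta\cap\Supp\T$ with $\T=\Ass^{-1}Z\cap\X$. Since $\Supp\T\subseteq W$ (supports of modules with associated primes in $W$ lie in $W$, as $W$ is specialization-closed), this set is contained in $\Delta\cap W=Z$. Conversely, each $\p\in Z$ admits $X\in\X$ with $\Ass X=\{\p\}$ by $(\AA)$, so $X\in\T$ and $\p\in\Supp X$. Hence the two sets are equal.

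The main point is step (i), where $(\AA)$ supplies test modules with a single associated prime. Once that is in place, the rest is bookkeeping with earlier results.
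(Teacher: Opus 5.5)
Your proof is correct and follows the paper's argument essentially step for step. Both reduce, via Proposition \ref{prop wtp X}, to showing that $\Phi_\Delta(Z)\cap\X$ is a weak torsion pair in $\X$ with $\Delta\cap\Supp\T=Z$, and both use the modules $X$ with $\Ass X=\{\p\}$ supplied by $(\AA)$ as test objects. The only cosmetic difference is that the paper obtains $\T^\perp_\X=\F$ from $\Delta\cap\Supp\T=Z$ together with Lemma \ref{formula perp delta}(1), whereas you apply $(\AA)$ directly to both inclusions; your remark that $\p\notin W$ is not needed there.
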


\begin{proof}
Let $\Delta=\Ass\X$ and let $Z$ be a specialization-closed subset of $\Delta$. Set $(\T,\F)\coloneq\Phi_\Delta(Z)\cap\X=(\Ass^{-1}Z\cap\X,\Ass^{-1}(\Delta\setminus Z)\cap\X)$. By Proposition \ref{prop wtp X}, it suffices to show that $(\T,\F)$ is a weak torsion pair in $\X$ and $\Psi^\X_{\Delta}(\T,\F)=Z$.

We first claim that $\Delta\cap\Supp\T=Z$. Since $\T\subseteq\Ass^{-1}Z$, we have $\Delta\cap\Supp\T\subseteq\Delta\cap\Supp(\Ass^{-1}Z)=\Psi_\Delta(\Phi_\Delta(Z))=Z$. If $\p$ is a prime ideal in $Z$, then there exists an $R$-module $X\in\X$ such that $\Ass X=\{\p\}$. We see that $X\in\Ass^{-1}Z\cap\X=\T$, and therefore $\p\in Z\cap\Supp X\subseteq\Delta\cap\Supp\T$. This proves the equality $\Delta\cap\Supp\T=Z$. By Lemma \ref{formula perp delta} (1) and the above equality, we have $\T^\perp_\X=\T^\perp_\Delta\cap\X=\Ass^{-1}(\Delta\setminus\Supp\T)\cap\X=\Ass^{-1}(\Delta\setminus Z)\cap\X=\F$.

We next claim that $^\perp_\X\F=\T$. The inclusion $\T\subseteq{}^\perp_\X\F$ is clear. Let $M$ be an $R$-module in $^\perp_\X\F$. If $\p$ is a prime ideal in $\Delta\setminus Z$, then there exists an $R$-module $X\in\X$ such that $\Ass X=\{\p\}$. Since $X\in\F$, we have $\Hom_R(M,X)=0$, which implies that $\Ass M\cap\{\p\}=\Ass M\cap\Ass X=\emptyset$. It follows that $\Ass M\cap(\Delta\setminus Z)=\emptyset$. Therefore, we have $M\in\Ass^{-1}Z\cap\X=\T$. This proves the equality $^\perp_\X\F=\T$.

Hence, we conclude that $(\T,\F)$ is a torsion pair in $\X$ and $\Psi^\X_{\Delta}(\T,\F)=\Delta\cap\Supp\T=Z$. This completes the proof of the theorem.
\end{proof}

\section{Applications}
In this section, we give applications of the results from Section 4. Throughout this section, let $R$ be a ring. We begin with a classification of (basic) torsion pairs in $\mod R$.

\begin{cor}\label{cor tp modR}
The equality
$$\btp(\mod R)=\{(\mod R/I,\mod R/J)\mid\text{$I,J$ are ideals of $R$ such that $IJ=0$ and $I+J=R$}\}$$
holds and there is a commutative diagram
$$\xymatrix{\spcl(\Spec R)\ar@<0.5ex>[r]^-{\Phi_{\Spec R}}&\tp(\mod R)\ar@<0.5ex>[l]^-{\Psi_{\Spec R}}\ar@{=}[r]&\wtp(\mod R)\\
X_{\Spec R}\ar[u]_\inc\ar@<0.5ex>[r]&\btp(\mod R)\ar[u]_\inc\ar@<0.5ex>[l]&}$$
where the horizontal maps are mutually inverse bijections. In particular, if $R$ is local or $\Min R$ is a singleton, then $\btp(\mod R)=\{(0,\mod R),(\mod R,0)\}$.
\end{cor}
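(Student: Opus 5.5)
We specialize the results of Section 4 to $\Delta=\Spec R$. First, $\Ass^{-1}(\Spec R)=\mod R$. Hence Theorem \ref{thm tp} gives $\wtp(\mod R)=\tp(\mod R)$ together with the mutually inverse bijections $\Phi_{\Spec R},\Psi_{\Spec R}$. Since $\Spec R\supseteq\Min R$, Theorem \ref{thm btp min} gives the bottom bijection $X_{\Spec R}\cong\btp(\mod R)$. This bijection is the restriction of $\Phi_{\Spec R}$, and in the other direction the restriction of $\Psi_{\Spec R}$ (Theorem \ref{thm btp}). So the diagram commutes with the inclusions.

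For the description of $\btp(\mod R)$, we use Theorem \ref{thm btp min}(2) with $\Delta=\Spec R$. A pair is basic if and only if it equals $(\CC(I),\CC(J))=(\mod R/I,\mod R/J)$ for ideals $I,J$ with $\V(I)\cap\V(J)=\emptyset$ and $IJ=0$. The condition $\V(I)\cap\V(J)=\V(I+J)=\emptyset$ is equivalent to $I+J=R$. Conversely, if $IJ=0$ and $I+J=R$, then condition (2) holds, so the pair is a basic torsion pair. This proves the stated equality.

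For the final claim, suppose $\Spec R=C\sqcup C'$ with $C,C'$ closed. If $R$ is local, the maximal ideal $\m$ lies in one of them, say $C$. Since $C$ is closed, it contains the closure of every point it contains; every prime is contained in $\m$, so the closure of every prime contains $\m$. Suppose some prime $\p$ lies in $C'$. Then the closure of $\{\p\}$ lies in $C'$, so $\m\in C'$, contradicting $C\cap C'=\emptyset$. Hence $C=\Spec R$ and $C'=\emptyset$. If instead $\Min R=\{\p\}$, then every prime contains $\p$, so $\Spec R$ is the closure of $\{\p\}$ and is irreducible. Therefore one of $C,C'$ is empty. In both cases $X_{\Spec R}=\{\emptyset,\Spec R\}$, and applying $\Phi_{\Spec R}$ gives $(0,\mod R)$ and $(\mod R,0)$.

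No step is a real obstacle; the only point needing care is verifying the connectedness of $\Spec R$ in the two special cases.
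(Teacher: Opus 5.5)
Your proposal is correct and follows the paper's route. You specialize Theorems \ref{thm tp}, \ref{thm btp}, and \ref{thm btp min} to $\Delta=\Spec R$ (using $\Ass^{-1}(\Spec R)=\mod R$ and $\CC_{\Spec R}(I)=\mod R/I$) and observe that $X_{\Spec R}=\{\emptyset,\Spec R\}$ when $R$ is local or $\Min R$ is a singleton. Beyond the paper's terse proof, you spell out that $\V(I)\cap\V(J)=\V(I+J)=\emptyset$ is equivalent to $I+J=R$, and you verify the connectedness of $\Spec R$ in the two special cases, which the paper asserts without argument.
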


\begin{proof}
The first equality and the commutative diagram follow from Theorems \ref{thm tp} and \ref{thm btp min} since $\mod R=\Ass^{-1}(\Spec R)$. If $R$ is local or $\Min R$ is a singleton, then we have $X_{\Spec R}=\{\emptyset,\Spec R\}$, which yields the final assertion.
\end{proof}

\begin{rem}
The assertion in the last sentence of Corollary \ref{cor tp modR} also follows from Proposition \ref{prop cano seq split}. Indeed, suppose that $R$ is local or $\Min R$ is a singleton, and $(\T,\F)$ is a basic torsion pair in $\mod R$. Since the $R$-module $R$ is indecomposable, $R$ belongs to either $\T$ or $\F$ by Corollary \ref{cor Delta spcl}. We may assume $R\in\T$. As $\T$ is a Serre subcategory of $\mod R$, every quotient of a free module belongs to $\T$. Therefore, $\T=\mod R$ and $\F=0$.
\end{rem}

When $(R,\m)$ is a local ring, we set $\Spec_0R\coloneq\Spec R\setminus\{\m\}$ and denote by $\dep R$ the subcategory of $\mod R$ consisting of $R$-modules of positive depth. We note that $\dep R=\Ass^{-1}(\Spec_0R)$. The following is a classification of torsion pairs in $\dep R$.

\begin{cor}\label{cor Spec0}
Suppose that $(R,\m)$ is a local ring with $\dim R>0$. Then one has
\[\btp(\dep R)=\{(\dep R/I,\dep R/J)\mid\text{$I,J$ are ideals of $R$ such that $IJ=0$ and $\V(I)\cap\V(J)\subseteq\{\m\}$}\}\]
and there is a commutative diagram
\[\xymatrix{\spcl(\Spec_0R)\ar@<0.5ex>[r]^-{\Phi_{\Spec_0R}}&\tp(\dep R)\ar@<0.5ex>[l]^-{\Psi_{\Spec_0R}}\ar@{=}[r]&\wtp(\dep R)\\
X_{\Spec_0R}\ar[u]_\inc\ar@<0.5ex>[r]&\btp(\dep R)\ar[u]_\inc\ar@<0.5ex>[l]}\]
where the maps in each row are mutually inverse bijections. Moreover, if $\depth R\ge2$, then $\btp(\dep R)=\{(0,\dep R),(\dep R,0)\}$.
\end{cor}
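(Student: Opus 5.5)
The plan is to specialize the earlier theorems to $\Delta=\Spec_0R$, using $\dep R=\Ass^{-1}(\Spec_0R)$. Since $\dim R>0$, the maximal ideal is not minimal, so $\Min R\subseteq\Spec_0R$. Theorem \ref{thm tp} then gives $\tp(\dep R)=\wtp(\dep R)$ and the top row of mutually inverse bijections $\Phi_{\Spec_0R},\Psi_{\Spec_0R}$. Theorem \ref{thm btp min} applies because $\Min R\subseteq\Delta$, and gives the bottom row as the restriction of the top one, so the diagram commutes.

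For the description of $\btp(\dep R)$, I use the equivalence (1)$\Leftrightarrow$(2) of Theorem \ref{thm btp min}. A basic torsion pair is exactly $(\CC_\Delta(I),\CC_\Delta(J))$ with $IJ=0$ and $\V(I)\cap\V(J)\cap\Spec_0R=\emptyset$. The latter condition is the same as $\V(I)\cap\V(J)\subseteq\{\m\}$. By definition $\CC_\Delta(I)=\dep R\cap\mod R/I$, which is what $\dep R/I$ means here: $R/I$-modules of positive depth over $R$, equivalently $\m\notin\Ass$. I will note this identification explicitly.

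For the last assertion, assume $\depth R\ge2$ and let $C\in X_{\Spec_0R}$, with $C\sqcup C'=\Spec_0R$ and $C,C'$ closed in $\Spec_0R$. The aim is to show $C=\emptyset$ or $C'=\emptyset$; then $X_{\Spec_0R}=\{\emptyset,\Spec_0R\}$, and $\Phi$ sends these to $(0,\dep R)$ and $(\dep R,0)$.

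This topological step is the main one. I plan to use Hartshorne's connectedness theorem: if $\depth R\ge2$, then the punctured spectrum is connected. Alternatively, I can argue directly. Using the proof of Theorem \ref{thm btp min}, get ideals $I,J$ with $IJ=0$, $\V(I)\cap\V(J)\subseteq\{\m\}$, and $C=\V(I)\cap\Spec_0R$. Then $I+J$ is $\m$-primary or the unit ideal.

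Next, use the exact sequence $0\to R/(I\cap J)\to R/I\oplus R/J\to R/(I+J)\to0$ together with $I\cap J$ nilpotent-related to $IJ=0$. Computing local cohomology $H^0_\m,H^1_\m$ and using $\depth R\ge2$ should force $R\to R/I\oplus R/J$ to split in a way giving an idempotent. Concretely, the element mapping to $(1,0)$ yields an idempotent of $R$ modulo nilpotents. Since $R$ is local, this forces $I$ or $J$ to be nilpotent, i.e. $C'$ or $C$ is empty.

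I expect this connectedness step to be the only nontrivial point, and I would simply cite Hartshorne's theorem rather than reprove it.
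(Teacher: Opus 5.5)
Your proposal is correct and follows the paper's proof. You specialize Theorems \ref{thm tp} and \ref{thm btp min} to $\Delta=\Spec_0R$ (valid because $\dim R>0$ gives $\Min R\subseteq\Spec_0R$), identify $\CC_{\Spec_0R}(I)=\dep R\cap\mod R/I$ with $\dep R/I$, and for $\depth R\ge2$ cite Hartshorne's connectedness of $\Spec_0R$ to get $X_{\Spec_0R}=\{\emptyset,\Spec_0R\}$.

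Citing Hartshorne is the right choice, because your alternative direct sketch does not work as written. When $I,J$ are both proper, $R\to R/I\oplus R/J$ has nonzero cokernel $R/(I+J)$, so no element of $R$ maps to $(1,0)$. Also, with only $IJ=0$, the left term of your exact sequence is $R/(I\cap J)$ rather than $R$, so $\depth R\ge2$ does not apply to it directly.
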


\begin{proof}
It follows from $\dim R>0$ that $\Min R\subseteq\Spec_0 R$. For every ideal $I$ of $R$, we see that $\CC_{\Spec_0R}(I)=\dep R\cap\mod R/I=\dep R/I$. Therefore, the assertion follows from Theorems \ref{thm tp} and \ref{thm btp min}. We next suppose that $\depth R\ge2$. Then $\Spec_0R$ is connected by \cite[Proposition 2.1]{H}. Hence, $X_{\Spec_0R}=\{\emptyset,\Spec_0R\}$ and it follows that $\btp(\dep R)=\{(0,\dep R),(\dep R,0)\}$.
\end{proof}

We denote by $\S_1(R)$ the subcategory of $\mod R$ consisting of $R$-modules $M$ satisfying the Serre condition $(\S_1)$, that is, $\depth_{R_\p}M_\p\ge\min\{1,\dim R_\p\}$ for all prime ideals $\p$ of $R$. We note that $\S_1(R)=\Ass^{-1}(\Min R)$. Therefore, torsion pairs in $\S_1(R)$ are classified as follows.

\begin{cor}\label{cor S1}
One has $\wtp(\S_1(R))=\tp(\S_1(R))=\btp(\S_1(R))$ and there is a one-to-one correspondence
\[\xymatrix{2^{\Min R}\ar@<0.5ex>[r]^-{\Phi_{\Min R}}&\tp(\S_1(R))\ar@<0.5ex>[l]^-{\Psi_{\Min R}}}.\]
\end{cor}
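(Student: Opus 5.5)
We specialize Theorems \ref{thm tp}, \ref{thm btp} and \ref{thm btp min} to $\Delta=\Min R$, using $\S_1(R)=\Ass^{-1}(\Min R)$.

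\textbf{Step 1: weak torsion pairs are torsion pairs, and the bijection.} Theorem \ref{thm tp} applied to $\Delta=\Min R$ gives $\wtp(\S_1(R))=\tp(\S_1(R))$. It also shows that $\Phi_{\Min R}$ and $\Psi_{\Min R}$ are mutually inverse bijections between $\spcl(\Min R)$ and $\tp(\S_1(R))$.

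It remains to check that $\spcl(\Min R)=2^{\Min R}$. Minimal primes are pairwise incomparable. Hence the closure of $\{\p\}$ in the subspace $\Min R$ is $\V(\p)\cap\Min R=\{\p\}$, so every subset of $\Min R$ is specialization-closed.

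\textbf{Step 2: every torsion pair is basic.} Since $\Min R\subseteq\Delta$ trivially, Theorem \ref{thm btp min} identifies $\btp(\S_1(R))$ with the image of $X_{\Min R}$ under $\Phi_{\Min R}$. So it suffices to show $X_{\Min R}=\spcl(\Min R)$, i.e.\ every subset $C\subseteq\Min R$ is closed in $\Min R$ and has a closed complement there.

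Since $\Min R$ is finite (as $R$ is noetherian) and each point is closed by Step 1, $\Min R$ carries the discrete topology. Explicitly, for $C\subseteq\Min R$ set $K=\bigcap_{\p\in C}\p$, with $K=R$ if $C=\emptyset$. Then $\V(K)\cap\Min R=C$: a minimal prime containing $K$ contains some $\p\in C$, by prime avoidance for the finite intersection, and hence equals it by minimality. The same argument shows that $C'=\Min R\setminus C$ is closed. Thus $C\in X_{\Min R}$, and the equality $\tp(\S_1(R))=\btp(\S_1(R))$ follows.

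The only point needing care is this discreteness of $\Min R$, which rests on finiteness of $\Min R$ and the incomparability of minimal primes; everything else is a direct application of the earlier theorems.
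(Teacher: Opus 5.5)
Your proposal is correct and is essentially the paper's proof: you specialize Theorems \ref{thm tp} and \ref{thm btp min} to $\Delta=\Min R$ and use that $\Min R$ is discrete, so that $X_{\Min R}=\spcl(\Min R)=2^{\Min R}$. The only difference is that you justify the discreteness, via finiteness of $\Min R$ and incomparability of minimal primes, whereas the paper takes it for granted; the fact you actually need for $\V(K)\cap\Min R=C$ is that a prime containing a finite intersection of ideals contains one of them, which is not prime avoidance proper.
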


\begin{proof}
Since $\Min R$ is discrete, we see that $X_\Delta=\spcl(\Min R)=2^{\Min R}$. Therefore, the assertion follows from Theorems \ref{thm tp} and \ref{thm btp min}.
\end{proof}

We next consider torsion pairs in $\CM(R)$, where $R$ is a Cohen--Macaulay local ring. Note that $\Ass(\CM(R))=\Min R$, and therefore $\CM(R)$ is a subcategory of $\S_1(R)$. We say that $R$ is of {\em finite CM type} if there are only finitely many isomorphism classes of indecomposable maximal Cohen--Macaulay modules. The following corollary describes the fundamental properties of torsion pairs in $\CM(R)$.

\begin{cor}\label{cor CM}
Suppose that $R$ is a Cohen--Macaulay local ring.
\begin{enumerate}[\rm(1)]
\item
All weak torsion pairs in $\CM(R)$ are basic.
\item 
If $\dim R\le 1$, then one has $\wtp(\CM(R))=\tp(\CM(R))$ and there exists a one-to-one correspondence
$$\xymatrix{2^{\Min R}\ar@<0.5ex>[r]^-{\Phi_{\Min R}}&\tp(\CM(R))\ar@<0.5ex>[l]^-{\Psi_{\Min R}}}.$$
\item 
If $\Min R$ is a singleton, then one has $\wtp(\CM(R))=\{(0,\CM(R)),(\CM(R),0)\}$. In particular, if $R$ is of finite CM type with $\dim R\ge 2$, then the same equality holds.
\end{enumerate}
\end{cor}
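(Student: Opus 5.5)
We treat the three parts separately. Everything rests on Corollary \ref{cor cano seq} (1), which puts every weak torsion pair in $\CM(R)$ in the form $(\Ass^{-1}W\cap\CM(R),\Ass^{-1}W^\complement\cap\CM(R))$ for some specialization-closed $W$.

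For (1), set $Z=W\cap\Min R$ and $Z'=\Min R\setminus Z$. Since $\Ass(\CM(R))=\Min R$, the pair becomes $(\Ass^{-1}Z\cap\CM(R),\Ass^{-1}Z'\cap\CM(R))$. We take $K=\bigcap_{\p\in Z}\p$ and $L=\bigcap_{\p\in Z'}\p$, and choose $n$ with $K^nL^n=0$; this is possible because $KL$ is contained in the nilradical. Put $I=K^n$ and $J=L^n$. Then $\V(I)\cap\V(J)\cap\Min R=\emptyset$ and $IJ=0$, so $IJM=0$ for every $M$. For a module $M\in\Ass^{-1}Z\cap\CM(R)$, the argument of Theorem \ref{thm btp}, (2)$\Rightarrow$(3), applies verbatim: $\Ass M\cap\V(J)=\emptyset$, so some $x\in J$ is $M$-regular, and $xIM=0$ forces $IM=0$. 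Conversely, if $M\in\CM(R)$ and $IM=0$, then $\Ass M\subseteq\Min R\cap\V(I)=Z$. Hence $\T=\CC_{\CM(R)}(I)$, and likewise $\F=\CC_{\CM(R)}(J)$, so the pair is basic. This matches the paper's definition of "basic", which for $\X=\CM(R)$ only requires the pair to have the form $(\CC_\X(I),\CC_\X(J))$; since we start from a weak torsion pair, that is all that is needed.

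For (2), we assume $\dim R\le1$. Then $\CM(R)=\S_1(R)=\Ass^{-1}(\Min R)$: if $\dim R=0$ both sides are $\mod R$, and if $\dim R=1$, maximal Cohen--Macaulay means depth $\ge1$, which is the same as $\m\notin\Ass M$. The claim is then exactly Corollary \ref{cor S1}.

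For (3), suppose $\Min R$ is a singleton. Since $\Ass\CM(R)\subseteq\Min R$ and $R\in\CM(R)$, we get $\Ass\CM(R)=\Min R$, again a singleton, and $0\in\CM(R)$. Corollary \ref{cor singleton} then gives the equality. For the last sentence it remains to check that a Cohen--Macaulay local ring of finite CM type with $\dim R\ge2$ has a unique minimal prime. We plan to do this in three steps:
\begin{enumerate}[\rm(a)]
\item By Auslander's theorem (Huneke--Leuschke), finite CM type forces $R$ to have an isolated singularity, so $R_\p$ is regular, hence a domain, for every non-maximal prime $\p$.
\item Since $\depth R\ge2$, \cite[Proposition 2.1]{H} shows $\Spec_0R$ is connected.
\item If two distinct minimal primes existed, some non-maximal prime would contain both, by connectedness of $\Spec_0R$ (a chain of irreducible components of the punctured spectrum meeting each other). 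That contradicts $R_\p$ being a domain. Hence $\Min R$ is a singleton.
\end{enumerate}
Step (c) is the main obstacle. It needs the observation that the irreducible components $\V(\p)\cap\Spec_0R$, for $\p\in\Min R$, cover $\Spec_0R$, and that connectedness forces two distinct components to meet in $\Spec_0R$. We expect to spell this out carefully, citing the isolated-singularity theorem as an external result.
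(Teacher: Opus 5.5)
Your proposal is correct. Parts (2) and the first assertion of (3) coincide with the paper's proof, while (1) and the finite-CM-type claim take different routes.

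In (1), the paper lifts the weak torsion pair only to $\S_1(R)$ via Corollary \ref{cor wtp restrict}. There Corollary \ref{cor S1} already says the pair is basic, and the paper then intersects back with $\CM(R)\subseteq\S_1(R)$. You instead lift to $\mod R$ via Corollary \ref{cor cano seq} (1), cut $W$ down to $Z=W\cap\Min R$, and re-run by hand, inside $\CM(R)$, the arguments of Theorem \ref{thm btp min} (3)$\Rightarrow$(2) and Theorem \ref{thm btp} (2)$\Rightarrow$(3). This is the same mathematics made explicit, with concrete ideals $I=K^n$, $J=L^n$. The one unstated step, $\V(K)\cap\Min R=Z$, is the standard fact that a prime containing a finite intersection of primes contains one of them. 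The paper's version is simply a shorter citation.

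For the finite CM type claim, the paper passes from the isolated singularity directly to $R$ being a normal domain. Implicitly this uses Serre's criterion: since $\dim R\ge2$ and $R$ is Cohen--Macaulay, $(\mathrm{R}_1)$ and $(\mathrm{S}_2)$ hold. You avoid Serre's criterion and use the connectedness of $\Spec_0R$ (the result already invoked in Corollary \ref{cor Spec0}). The closed sets $\V(\p)\cap\Spec_0R$ for $\p\in\Min R$ are nonempty (as $\p\ne\m$) and cover $\Spec_0R$. If there were two or more of them, one would have to meet the union of the others, or else $\Spec_0R$ would be disconnected. That produces a nonmaximal $\q$ lying over two distinct minimal primes, so $R_\q$ is not a domain, contradicting regularity. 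This argument is valid and gives exactly the needed conclusion. The paper's route is quicker and gives the stronger statement that $R$ is a normal domain.
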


\begin{proof}
(1) Let $(\T,\F)$ be a weak torsion pair in $\CM(R)$. By Corollary \ref{cor wtp restrict}, there exists a weak torsion pair $(\T',\F')$ in $\S_1(R)$ such that $(\T,\F)=(\T',\F')\cap\CM(R)$. It follows from Corollary \ref{cor S1} that $(\T',\F')$ is basic. Therefore, $(\T,\F)$ is basic.

(2) This follows from Corollary \ref{cor S1} since $\CM(R)=\S_1(R)$.

(3) The first assertion is clear from Corollary \ref{cor singleton} since $\Ass(\CM(R))=\Min R$. Suppose that $R$ is of finite CM type with $\dim R\ge 2$. By \cite[Theorem 7.12]{LW}, $R$ is an isolated singularity; that is, $R_\p$ is regular for all nonmaximal prime ideals $\p$. In particular, $R$ is a normal domain. Hence, we have $\Min R=\{0\}$.
\end{proof}

The following corollary shows that torsion pairs in $\CM(R)$ can be explicitly described if $R$ is a one-dimensional reduced local ring.

\begin{cor}
If $R$ is a one-dimensional reduced local ring, then one has 
$$\tp(\CM(R))=\{(\CM(R/I),\CM(R/J))\mid\Min R=C\sqcup C', I=\bigcap_{\p\in C}\p,J=\bigcap_{\p\in C'}\p\}.$$
\end{cor}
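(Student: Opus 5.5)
Since $R$ is a one-dimensional reduced local ring, it is Cohen--Macaulay of dimension one, so Corollary \ref{cor CM} (2) gives $\tp(\CM(R))=\wtp(\CM(R))$ and a bijection $2^{\Min R}\to\tp(\CM(R))$ via $\Phi_{\Min R}$. Also $\CM(R)=\S_1(R)=\Ass^{-1}(\Min R)$ because $\dim R=1$. Hence every torsion pair in $\CM(R)$ has the form $\Phi_{\Min R}(C)=(\Ass^{-1}C,\Ass^{-1}C')$ for a unique subset $C\subseteq\Min R$, where $C'=\Min R\setminus C$. So $\Min R=C\sqcup C'$, and it remains to identify $\Ass^{-1}C$ with $\CM(R/I)$ and $\Ass^{-1}C'$ with $\CM(R/J)$, where $I=\bigcap_{\p\in C}\p$ and $J=\bigcap_{\p\in C'}\p$. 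Conversely, each such decomposition $(C,C')$ yields a torsion pair, so the two sets coincide once the identification is proved.

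\textbf{Step 1.} We show $IJ=0$ and $\V(I)\cap\V(J)\cap\Min R=\emptyset$. Since $R$ is reduced, $IJ\subseteq I\cap J=\bigcap_{\p\in\Min R}\p=0$. A minimal prime containing $I$ contains some $\p\in C$, so it equals that $\p$; hence $\V(I)\cap\Min R=C$. Similarly $\V(J)\cap\Min R=C'$, and these are disjoint. Theorem \ref{thm btp} (2)$\Rightarrow$(3) with $\Delta=\Min R$ then gives
\[
\CC_{\Min R}(I)=\Ass^{-1}(\Min R\cap\V(I))=\Ass^{-1}C,
\]
and likewise $\CC_{\Min R}(J)=\Ass^{-1}C'$. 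Equivalently, this follows from Theorem \ref{thm btp min} (2).

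\textbf{Step 2.} This is the main point to check carefully: $\CC_{\Min R}(I)=\CM(R)\cap\mod R/I$ should equal $\CM(R/I)$. Since $R/I$ is a quotient by an intersection of minimal primes of a one-dimensional ring, every minimal prime of $R/I$ has dimension one, so $\dim R/I=1$ (assuming $C\neq\emptyset$). For a finitely generated $R/I$-module $M$, depth over $R/I$ equals depth over $R$, computed with respect to the same maximal ideal. A module is maximal Cohen--Macaulay over a one-dimensional ring exactly when it is zero or has positive depth. Hence $M\in\CM(R/I)$ if and only if $M\in\CM(R)$ and $IM=0$.

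\textbf{Degenerate cases.} If $C=\emptyset$, then $I=R$, and $\CM(R/I)=0=\Ass^{-1}\emptyset$, which is consistent with the convention that $\CM$ of the zero ring is $0$. The case $C'=\emptyset$ is handled the same way with $J=R$.

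Combining Steps 1 and 2, $\Phi_{\Min R}(C)=(\CM(R/I),\CM(R/J))$. Running $C$ over all subsets of $\Min R$ then yields exactly the asserted description of $\tp(\CM(R))$.
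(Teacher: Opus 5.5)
Your proposal is correct and follows essentially the same route as the paper. You use Corollary \ref{cor CM}~(2) to reduce to $\Phi_{\Min R}(C)$, Theorem \ref{thm btp} with $IJ\subseteq I\cap J=0$ to get $\Phi_{\Min R}(C)=(\CC_{\Min R}(I),\CC_{\Min R}(J))$, and $\dim R/I=1$ to identify $\CC_{\Min R}(I)$ with $\CM(R/I)$, and your handling of the degenerate case is the right one: it is $C=\emptyset$ (not $C=\Min R$, as the paper's text says) that forces $I=R$.
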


\begin{proof}
Since $R$ is Cohen--Macaulay, Corollary \ref{cor CM} (2) implies $\tp(\CM(R))=\{\Phi_{\Min R}(C)\mid C\in\Min R\}$. Let $C$ be a subset of $\Min R$. Set $C'=\Min R\setminus C$, $I=\bigcap_{\p\in C}\p$, and $J=\bigcap_{\p\in C'}\p$. Then we see that $C=\V(I)\cap\Min R$, $C'=\V(J)\cap\Min R$, and $I\cap J=0$. Therefore, $\V(I)\cap\V(J)\cap\Min R=C\cap C'=\emptyset$ and $IJ=0$. By Theorem \ref{thm btp}, we have $\Phi_{\Min R}(C)=\Phi_{\Min R}(\V(I)\cap\Min R)=(\CC_{\Min R}(I),\CC_{\Min R}(J))$. If $C=\Min R$, then $I=R$, and hence $\CC_{\Min R}(I)=0=\CM(R/I)$. If $C\ne\Min R$, then $R/I$ is Cohen--Macaulay and $\CC_{\Min R}(I)=\CM(R/I)$ since $R/I$ is reduced and $\dim (R/I)=1$. Thus $\Phi_{\Min R}(C)=(\CM(R/I),\CM(R/J))$. This implies $\tp(\CM(R))=\{\Phi_{\Min R}(C)\mid C\in\Min R\}=\{(\CM(R/I),\CM(R/J))\mid\Min R=C\sqcup C', I=\bigcap_{\p\in C}\p,J=\bigcap_{\p\in C'}\p\}$.
\end{proof}

In the following example, we see that a canonical torsion pair arises over a fiber product of one-dimensional Cohen--Macaulay local rings.

\begin{ex}
Let $R$ and $S$ be one-dimensional Cohen--Macaulay local rings and $Q$ an artinian local ring. Suppose that there are surjective ring homomorphisms $R\overset{f}\to Q$, $S\overset{g}\to Q$. Consider the fiber product
\[\xymatrix{
R\underset{Q}\times S\ar[r]^-\beta\ar[d]_\alpha&S\ar[d]^g\\
R\ar[r]^-f&Q
.}\]
We set $T\coloneq R\underset{Q}\times S$, $I\coloneq\Ker\alpha$, and $J\coloneq\Ker\beta$. It follows from \cite[Lemmas 1.2 and 1.5]{AAM} that $T$ is a one-dimensional Cohen--Macaulay local ring. We see that $I\cap J=0$ and that $Q\cong T/(I+J)$. Therefore, we have $IJ=0$ and $\V(I)\cap\V(J)=\V(I+J)=\{\m\}$ where $\m$ is the maximal ideal of $T$. Therefore, by Corollary \ref{cor Spec0}, the pair $(\dep(T/I),\dep(T/J))=(\CM(R),\CM(S))$ is a basic torsion pair in $\CM(T)$.
\end{ex}

If $R$ is a local hypersurface ring, then we can apply Theorem \ref{thm A} to classify weak torsion pairs in $\CM(R)$ as follows.

\begin{cor}\label{cor HS}
Let $R$ be a hypersurface local ring $S/(f)$ where $S$ is a regular local ring and $f$ is a nonunit element of $S$. Then one has $\Phi^{\CM(R)}_{\Min R}(Z)=\Phi_{\Min R}(Z)\cap\CM(R)$ for all subsets of $Z$ of $\Min R$ and the maps $\Phi^{\CM(R)}_{\Min R},\Psi^{\CM(R)}_{\Min R}$ are mutually inverse bijections. Therefore, there exists a one-to-one correspondence
\[\xymatrix{2^{\Min R}\ar@<0.5ex>[r]^-{\Phi^{\CM(R)}_{\Min R}}&\wtp(\CM(R))\ar@<0.5ex>[l]^-{\Psi^{\CM(R)}_{\Min R}}}.\]
\end{cor}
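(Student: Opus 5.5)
The plan is to apply Theorem \ref{thm A} with $\X=\CM(R)$. Since $\Ass(\CM(R))=\Min R$ and $\Min R$ carries the discrete topology, $\spcl(\Min R)=2^{\Min R}$. So everything reduces to checking condition $(\AA)$: for every $\p\in\Min R$ we must produce a maximal Cohen--Macaulay $R$-module $X$ with $\Ass X=\{\p\}$. Once this is done, Theorem \ref{thm A} gives the equality $\Phi^{\CM(R)}_{\Min R}(Z)=\Phi_{\Min R}(Z)\cap\CM(R)$ and the mutually inverse bijections directly.

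To construct $X$, use the structure of minimal primes of a hypersurface. $S$ is a regular local ring, hence a UFD, so we can factor $f=u\,g_1^{e_1}\cdots g_r^{e_r}$ with $u$ a unit and $g_i$ pairwise non-associate primes. The minimal primes of $R$ are exactly $\p_i=g_iS/(f)$.

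For a fixed $i$, put $X=S/(g_i^{e_i})$, or simply $S/(g_i)$. This is an $R$-module because $f\in(g_i)$. It is a hypersurface in $S$, hence Cohen--Macaulay of dimension $\dim S-1=\dim R$. Since depth over $R$ agrees with depth over $S$, $X$ is a maximal Cohen--Macaulay $R$-module. Moreover $\Ass_S X=\{(g_i)\}$, because $(g_i)$ is prime and $g_i^{e_i}$ is a prime power in a UFD, so $S/(g_i^{e_i})$ is unmixed with that single associated prime. Taking $X=S/(g_i)=R/\p_i$ avoids this point, since $R/\p_i$ is a domain. Hence $\Ass_R X=\{\p_i\}$.

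The only real obstacle is this verification of $(\AA)$, namely the UFD factorization and checking that $S/(g_i)$ is maximal Cohen--Macaulay over $R$. Once it is established, the remaining assertions follow formally from Theorem \ref{thm A}.
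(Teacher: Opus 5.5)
Your argument is the paper's: verify condition $(\AA)$ for $\CM(R)$ by taking $X=R/\p_i$ for each prime factor of $f$, then invoke Theorem \ref{thm A}. The one case you skip is $f=0$, which the hypothesis allows since $f$ is only assumed to be a nonunit; there your factorization does not exist and $\dim R=\dim S$. The paper treats that case separately, and it is trivial: $R=S$ is then a domain with $\Min R=\{0\}$, and $X=R$ witnesses $(\AA)$.
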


\begin{proof}
We show that the subcategory $\CM(R)$ of $\mod R$ satisfies the condition $(\AA)$. The case $f=0$ is clear from Corollary \ref{cor CM} (3). We therefore assume that $f\ne0$. Let $f=p_1\cdots p_n$ be a prime decomposition of $f$. Then $\Ass(\CM(R))=\Ass R=\{\p_1,\dots,\p_n\}$, where $\p_i=(p_i)/(f)$ for $i=1,\dots,n$. We see that the $R$-module $R/\p_i$ is maximal Cohen-Macaulay and that $\Ass_R(R/\p_i)=\{\p_i\}$. Hence $\CM(R)$ satisfies the condition $(\AA)$ and the assertion follows from Theorem \ref{thm A}.
\end{proof}

We end this section by giving an example of a weak torsion pair which is not a torsion pair.

\begin{ex}
Let $k$ be an algebraically closed field, and consider the $2$-dimensional hypersurface local ring $R=k\llbracket x,y,z\rrbracket/(xy)$.
Then $\{R,(x), (y),(x,y^i),(x,z^i)\mid i=1,2,\dots\}$ is the complete list of nonisomorphic indecomposable maximal Cohen--Macaulay $R$-modules; see \cite[Theorem 5.3]{BD} for instance. 
We observe that $\Min R=\{(x),(y)\}$, that $\Ass^{-1}\{(x)\}\cap\CM(R)=\add(y)=\CC_{\CM(R)}((x))$, and that $\Ass^{-1}\{(y)\}\cap\CM(R)=\add (x)=\CC_{\CM(R)}((y))$. 
By Corollary \ref{cor HS}, we have $\wtp(\CM(R))=\{(0,\CM(R)),(\CM(R),0), (\add(x),\add(y)),(\add(y),\add(x))\}$.

However, the pair $(\add(x),\add(y))$ is not a torsion pair in $\CM(R)$. 
Indeed, suppose that there exists a short exact sequence $0\to M\to (y,z)\to N\to0$ in $\CM(R)$ such that $M\in\add(x)$ and $N\in\add(y)$. 
Then, by Corollary \ref{cor cano seq} (2), we have $M\cong\Gamma_{(x)}(y,z)$ and $N\cong(y,z)/\Gamma_{(x)}(y,z)$. 
Since $\Gamma_{(x)}(y,z)=(x)\cap(y,z)$, it follows that $N\cong(y,z)/((x)\cap(y,z))\cong(x,y,z)/(x)$. 
The short exact sequence $0\to(x,y,z)/(x)\to R/(x)\to k\to0$ together with the depth lemma shows that $\depth_R(x,y,z)/(x)=1<2=\dim R$. Thus, $N$ is not maximal Cohen--Macaulay, which is a contradiction. By the same argument, the pair $(\add(y),\add(x))$ is not a torsion pair in $\CM(R)$. Consequently, we have $\tp(\CM(R))=\{(0,\CM(R)),(\CM(R),0)\}$.
\end{ex}

\begin{ac}
The authors thank Yuki Mifune for giving them valuable comments.
\end{ac}


\end{document}